\newcommand{\m}[1]{
\ifdefequal{#1}{1}
{\mathbbm{#1}}
{\mathbb{#1}}
}
\newcommand{\q}[1]{\mathscr{#1}}
\newcommand{\cal}[1]{\mathcal{#1}}
\newcommand{\ds}{\displaystyle}
\newcommand{\e}{\varepsilon}
\newcommand{\weak}{\rightharpoonup}
\newcommand{\supp}{supp}
\newcommand{\jap}[1]{\left\langle #1 \right\rangle}
\newcommand{\partere}{\mathrm{Re} }
\newcommand{\be}{\begin{equation}}
\newcommand{\ee}{\end{equation}}
\DeclareMathOperator{\sgn}{\mathrm{sgn}}
\DeclareMathOperator{\Ai}{\mathrm{Ai}}
\DeclareMathOperator{\loc}{\mathrm{loc}}
\renewcommand{\Re}{\mathop{\mathrm{Re}}}
\renewcommand{\le}{\leqslant}
\renewcommand{\ge}{\geqslant}
\theoremstyle{plain}
\newtheorem{thm}{Theorem}
\newtheorem*{thm*}{Theorem}
\newtheorem{prop}[thm]{Proposition}
\newtheorem{cor}[thm]{Corollary}
\newtheorem{lem}[thm]{Lemma}
\newtheorem{claim}[thm]{Claim}
\theoremstyle{definition}
\theoremstyle{remark}
\newtheorem{nb}[thm]{Remark}
\def\blfootnote{\xdef\@thefnmark{}\@footnotetext}
\title{Self-similar dynamics for the modified Korteweg-de Vries equation}
\date{}
\author{Simão Correia, Raphaël Côte \and Luis Vega}
\subjclass[2010]{35Q53 (primary),  35C06, 35B40, 34E10}  
\thanks{Simão Cor\-reia is supported by Fun\-da\-ção para a Ciência e Tecnologia under the project UID/MAT/4561/ 2019. Raphaël Côte is supported by the Agence Nationale de la Recherche under the contract MAToS ANR-14-CE25-0009-0. Luis Vega is supported by an ERCEA Advanced Grant 2014 669689 - HADE, by the MEIC project MTM2014-53850-P and MEIC Severo Ochoa excellence accreditation SEV-2013-0323.}
\begin{document}

\begin{abstract}
\noindent We prove a local well posedness result for the modified Korteweg-de Vries equation in a critical space designed so that is contains self-similar solutions. As a consequence, we can study the flow of this equation around self-similar solutions: in particular, we give an asymptotic description of small solutions as $t \to +\infty$ and construct solutions with a prescribed blow up behavior as $t \to 0$.
\end{abstract}

\maketitle

\section{Introduction}

In this paper, we are interested in the dynamics near self-similar solutions for the  modified Korteweg-de Vries equation:
\begin{align} \label{mkdv} \tag{mKdV}
\partial_t u + \partial_{xxx}^3 u + \epsilon \partial_x (u^3) =0, \quad u: \m R_t \times \m R_x \to \m R.
\end{align}
The signum $\epsilon \in \{\pm 1 \}$ indicates whether the equation is focusing or defocusing. In our framework, $\epsilon$ will play no major role.

The \eqref{mkdv} equation enjoys a natural scaling: 
if $u$ is a solution then
\[ u_\lambda(t,x) := \lambda^{1/3} u(\lambda t, \lambda^{1/3} x) \]
is also a solution to \eqref{mkdv}. As a consequence, the self-similar solutions, which preserve their shape under scaling
\[ S(t,x) = t^{-1/3} V(t^{-1/3} x), \]
are therefore of special interest. 

Self-similar solutions play an important role indeed for the \eqref{mkdv} flow: they exhibit an explicit blow up behavior, and are also related with the long time description of solutions. Even for small and smooth initial data, solutions display a modified scattering where self-similar solutions naturally appear: we refer to Hayashi and Naumkin \cite{HN99,HN01}, which was revisited by Germain, Pusateri and Rousset \cite{GPR16} and Harrop-Griffiths \cite{HaGr16}.

Another example where self-similar solutions of  the \eqref{mkdv} equation are relevant is in the long time asymptotics of the so-called Intermediate Long Wave (ILW) equation. This equation occurs in the propagation of waves in a one-dimensional stratified fluid in two limiting cases. In the shallow water limit, the propagation reduces to the KdV equation, while in the deep water limit, it reduces to the so-called Benjamin-Ono equation. In a recent work, Bernal-Vilchis and Naumkin \cite{B_VN19}
study the large-time behavior of small solutions of the (modified) ILW, and they prove that in the so-called self-similar region the solutions tend at infinity to a self-similar solution of \eqref{mkdv}. 

Self-similar solutions and the \eqref{mkdv} flow are also related to some other simplified models in fluid dynamics. More precisely, Goldstein and Petrich \cite{GP92} find a formal connection between the evolution of the boundary of a vortex patch in the plane under Euler equations and a hierarchy of completely integrable dispersive equations. The first element of this hierarchy is:
\[ \partial_t z = - \partial_{sss} z +\partial_s \bar z (\partial_{ss} z)^2, \quad |\partial_s z|^2=1, \]
where $z=z(t,s)$ is complex valued and parametrize by its arctlength $s$ a plane curve which evolves in time $t$. A direct computation shows that its curvature solves the focusing \eqref{mkdv} (with $\epsilon=1$), and self-similar solutions with initial data 
\begin{gather} \label{def:CI}
U(t) \weak c \delta_{0} + \alpha \mathop{\mathrm{v.p.}} \left( \frac{1}{x} \right) \quad \text{as } t \to 0^+, \quad \alpha, c \in \m R, \end{gather}
correspond to logarithmic spirals making a corner, see \cite{PV07}.

Finally \eqref{mkdv} is a member of a two parameter family of geometric flows that appears as a model for the evolution of vortex filaments. In this case, the filaments are curves that propagate in 3d, and their curvature and torsion determined a complex valued function that satisfies a non-linear dispersive equation. This equation, that depends on the two free parameters, is a combination of a cubic non-linear Schrödinger equation (NLS) and a complex modified Korteweg-de Vries equation. 

The particular case of cubic (NLS) has received plenty of attention. The corresponding geometric flow is known as either the binormal curvature flow or the Localized Induction Approximation, name that is more widely used in the literature in fluid dynamics. In this setting, the relevant role played by the self-similar solutions, including also logarithmic spirals, has been largely studied. We refer the reader to the recent paper by Banica and Vega \cite{BV18} and the references there in. Among other things, in this article the authors prove that the self-similar solutions have finite energy, when the latter is properly defined. Moreover, they give a well-posedness result in an appropriately chosen space of distributions that contains the self-similar solutions.

\bigskip

Our goal in this paper is to continue our work initiated in \cite{CCV19},  and to study the \eqref{mkdv} flow in spaces in which self-similar solutions naturally live. As we will see, the number of technical problems increases dramatically with respect to the case of (NLS). This is due to the higher dispersion, which makes the algebra rather more complicated, and to the presence of derivatives in the non-linear term.

\section{Main results}

\subsection{Notations and functional setting}

We start with some notations. $\hat u$ represents the Fourier transform of a function $u$ (in its space variable $x$ only, if $u$ is a space time function), and we will often denote $p$ the variable dual to $x$ in the Fourier side. We denote by $\mathcal{G}(t)$  the linear KdV group:
\[ \widehat{\mathcal{G}(t) v }(p) = e^{i t p^3} \hat v(p), \]
for any $v \in \cal S'(\m R)$. Given a (space-time) function $u$, we denote $\tilde u$, the \textit{profile} of $u$, as the function defined by
\begin{align} \label{def:u_tilde}
\tilde{u}(t, p):= \widehat{\mathcal{G}(-t)u(t)}(p) = e^{- i t p^3} \hat u(t,p).
\end{align}
In all the following, $C$ denotes various constants, which can change from one line to the next, but does not depend on the other variables which appear. As usual, we use the conventions $a \lesssim b$ and $a = O(b)$ to abbreviate $a \le C b$.

We will also use the Landau notation $a = o_n(b)$ when $a$ and $b$ are two complex quantities (depending in particular of $n$) such that $a/b \to 0$ as $n \to +\infty$; and \emph{mutatis mutandis} $a = o_\e(b)$ when $a/b \to 0$ as $\e \to 0$.

We use often the japanese bracket $\jap y = \sqrt{1+|y|^2}$, and the (complex valued) Airy-Fock function
\begin{equation} \label{def:Ai}
\Ai(z) := \frac{1}{\pi} \int_0^{+\infty} e^{i p z + i p^3} dp.
\end{equation}

For $v \in \cal S'(\m R)$ such that $\hat v \in L^\infty \cap \dot H^1$, and for $t >0$, we define the norm (depending on $t$) with which we will mostly work:
\begin{gather} \label{def:E(t)}
\| v \|_{\q E(t)} := \| \widehat{\mathcal{G}(-t) v} \|_{L^\infty(\m R)} + t^{-1/6} \| \partial_{p} \widehat{\mathcal{G}(-t) v} \|_{L^2((0,+\infty))}.
\end{gather}
Let us remark that we will only consider real-valued functions $u$, and so $\hat u(t,-p) = \overline{\hat u(t,p)}$. As a consequence, the knowledge of frequencies $p >0$ is enough to completely determine $u(t)$, and in the above definition, the purpose of considering $L^2((0,+\infty))$ is to allow a jump at 0. This is necessary because self-similar solutions with $\alpha \ne 0$ (in \eqref{def:CI}) do exhibit such a jump. Indeed, we recall the main result of \cite{CCV19}.

\begin{thm*}
Given $c, \alpha \in \m R$ small enough, there exists unique $a \in \m R$, $A,B \in \m C$ and a self-similar solution $S(t,x) = t^{-1/3} V(t^{-1/3} x)$, where $V$ satisfies
\begin{align}
\text{for } p \ge 2, \quad e^{-it p^3} \hat V(p) & = A e^{i a \ln |p|} + B \frac{e^{3ia \ln |p| -i \frac{8}{9} p^3}}{p^3} + z(p), \label{eq:ss_hf} \\
\text{for } |p| \le 1, \quad e^{-it p^3} \hat V(p) & = c + \frac{3 i \alpha}{2\pi} \sgn(p) + z(p), \label{eq:ss_lf}
\end{align}
where $z \in W^{1,\infty}(\m R)$, $z(0)=0$ and for any $k < \frac{4}{7} $, $|z(p)| + |p z'(p)| = O(|p|^{-k})$ as $|p| \to +\infty$.
\end{thm*}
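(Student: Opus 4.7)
The plan is to pass to Fourier space and exploit the fact that for a self-similar solution $S(t,x) = t^{-1/3}V(t^{-1/3}x)$, the change of variable $q = t^{1/3}p$ turns the profile $\tilde S(t,p) = e^{-itp^3}\hat V(t^{1/3}p)$ into a \emph{time-independent} function $g(q) = e^{-iq^3}\hat V(q)$. The Duhamel representation for the profile of an \eqref{mkdv} solution then collapses to a fixed-point equation $g = g_0 + \mathcal{N}(g,g,g)$, where $g_0$ encodes the initial data $c\delta_0 + \alpha\,\mathrm{v.p.}(1/x)$ and has Fourier transform essentially $c + \frac{3i\alpha}{2\pi}\sgn(p)$, while $\mathcal{N}(g,g,g)$ is a trilinear integral carrying the oscillatory phase $\phi(p_1,p_2) = p_1^3 + p_2^3 + (p-p_1-p_2)^3 - p^3$. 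I would look for $g$ of the prescribed form \eqref{eq:ss_hf}--\eqref{eq:ss_lf}, parametrised by $(A, B, a, c, \alpha)$ with a remainder $z$ lying in an appropriate weighted space, and set up a contraction argument for small $(c,\alpha)$.

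The heart of the analysis is a stationary phase study of $\mathcal{N}(g,g,g)$ at high frequency $|p| \ge 2$. The phase $\phi$ has a family of \emph{resonant} critical points (where $\phi = 0$ and one of $p_1, p_2, p-p_1-p_2$ equals $p$), and these produce a logarithmic amplitude divergence which has to be absorbed into an exponent, forcing $g$ to oscillate as $Ae^{ia\ln|p|}$ with $a$ determined self-consistently by $|A|^2$. The unique \emph{non-resonant} stationary point is $(p_1,p_2) = (p/3, p/3)$, with Hessian determinant $12p^2$ and stationary value $\phi(p/3,p/3) = -\tfrac{8}{9}p^3$; cubing the leading term $g(p/3)^3 \approx A^3 e^{3ia\ln|p|}$, dividing by $|p|$ coming from $\sqrt{|\det H|}$ in the stationary phase formula, and gaining two further powers of $1/p$ via integration by parts against the phase $e^{-i\frac{8}{9}p^3}$, produces exactly the subleading term $B \frac{e^{3ia\ln|p|-i\frac{8}{9}p^3}}{p^3}$. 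Away from both kinds of stationary points, repeated integrations by parts in $(p_1,p_2)$ give a contribution that fits into the remainder $z$.

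At low frequency $|p| \le 1$ stationary phase is ineffective, but the derivative $\partial_x$ appearing in front of $u^3$ contributes a factor of $p$ which makes the nonlinearity small, so the leading profile $c + \frac{3i\alpha}{2\pi}\sgn(p)$ persists up to a Hölder-continuous correction vanishing at the origin; this is the right framework because $z$ is allowed to be discontinuous in $\q E(t)$ only through the $\sgn$ jump already accounted for. Gluing the low- and high-frequency analyses in a transition region and iterating in a Banach space parametrised by $(A, B, a, z)$ yields existence and uniqueness for small $(c,\alpha)$. The main difficulty I expect is tracking the coefficient $B$ and the logarithmic phase $3ia\ln|p|$ inside the subleading oscillation self-consistently, since both must emerge from cubing the leading term and be preserved exactly by the fixed-point iteration; a secondary obstacle is establishing the sharp decay $|z(p)| + |p z'(p)| = O(|p|^{-k})$ for every $k < 4/7$, where the exponent $4/7$ appears to saturate the balance between the stationary-phase gain at $(p/3,p/3)$ and the loss coming from differentiating the convolution, and requires a finely tuned weighted norm.
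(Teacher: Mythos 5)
The statement you were asked to prove is not actually proved anywhere in this paper: it is the main theorem of the companion work \cite{CCV19}, quoted in Section 2 as background, so there is no internal proof to measure your proposal against. Judged against the strategy of that cited work -- which the technical core of the present paper mirrors -- your outline captures the intended mechanism: for a self-similar solution the profile $g(q)=e^{-iq^3}\hat V(q)$ is indeed time-independent in the self-similar Fourier variable $q=t^{1/3}p$, and everything hinges on a stationary-phase analysis of the cubic term whose two relevant critical contributions are the ones you name: the resonant one, producing a term of size $|g(q)|^2g(q)/q$ whose non-integrability in $q$ is exactly what forces the logarithmic phase $e^{ia\ln|p|}$ with $a$ proportional to $|A|^2$, and the point $q/3$ (Hessian determinant $12p^2$, stationary value $-\tfrac{8}{9}p^3$, as you computed), which after one integration by parts against $e^{-\frac{8i}{9}q^3}$ yields the correction $B\,e^{3ia\ln|p|-\frac{8i}{9}p^3}p^{-3}$ with $B\propto A^3$. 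Compare Lemma \ref{lem:desenvolveoscilatorio} of this paper, whose expansion \eqref{eq:N_expansion} consists of precisely these two terms plus a remainder.

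Where your sketch would need repair: scaling out time does not collapse Duhamel to $g=g_0+\mathcal{N}(g,g,g)$; it yields a first-order nonlocal ODE, $g'(q)=-\tfrac{3i\epsilon}{4\pi^2}\,q^2\iint_{q_1+q_2+q_3=1}e^{-iq^3(1-q_1^3-q_2^3-q_3^3)}g(qq_1)g(qq_2)g(qq_3)\,dq_1dq_2$, equivalently an integral equation carrying an extra $dq$-integration. This matters because the resonant part of the right-hand side is not integrable at infinity, so the contraction you describe cannot close in a space of bounded profiles: one must first conjugate by an integrating factor of the type \eqref{def:E_u} and run the fixed point for the renormalized profile and the remainder $z$ only, with $A$, $a$ and $B$ recovered afterwards as asymptotic constants of integration rather than carried as unknowns of the iteration -- keeping $(A,B,a)$ as fixed-point parameters, as you propose, reintroduces exactly the self-consistency problem you flag without resolving it. Moreover, since the prescribed data are $(c,\alpha)$ at $p=0^{\pm}$, the iteration has to be run from the jump outward so that uniqueness of the solution, and hence of $(A,a,B)$, follows for given small $(c,\alpha)$; and the stationary phase must be performed with only H\"older-$\tfrac12$/$\dot H^1_p$ regularity of the profile, which is the same delicate point the Appendix of the present paper is devoted to in the dynamical setting. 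Finally, your account of the exponent $4/7$ is a plausible guess but is not derived; it comes out of the quantitative remainder estimates in \cite{CCV19}, and nothing in your sketch pins it down.
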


Let us emphasize that the $\q E(t)$ norm is scaling invariant, in the following sense:
\[ \forall \lambda >0, \quad \| u_\lambda(t) \|_{\q E(t)} = \| u(\lambda t) \|_{\q E(\lambda t)}. \]
In particular, self-similar solutions have constant $\q E(t)$ norm for $t \in (0,+\infty)$.

If $u$ is a space-time function defined on a time interval $I \subset (0,+\infty)$, we extend the above definition and denote
\[ \| u \|_{\q E(I)} := \sup_{t \in I} \| u(t) \|_{\q E(t)} =  \sup_{t \in I} \left( \| \tilde u(t) \|_{L^\infty(\m R)} +  t^{-1/6} \| \partial_p \tilde u(t) \|_{L^2((0,+\infty))} \right). \]

In the same spirit, we define the functional space 
\[ \q E(1): = \{ u \in \cal S'(\m R) : \| u \|_{\q E(1)} < +\infty \}, \]
and  for $I \subset (0,+\infty)$,
\begin{align} \label{def_E}
\q E (I) = \{ u: I \to \cal S'(\m R): \tilde u \in \q  C(I, \q C_b((0,+\infty))), \partial_p \tilde u \in L^\infty(I,L^2((0,+\infty))) \},
\end{align}
endowed with the norm $\| \cdot \|_{\q E(I)}$.

\medskip

\subsection{Main results}
 
We can now state our results. Our main result is a local well-posedness result in the space $\q E(I)$, for initial data $u_1 \in \q E(1)$ at time $t=1$.

\begin{thm} \label{th1}
Let $u_1 \in \q E(1)$. Then there exist $T>1$ and a solution $u \in \q E([1/T,T])$ to \eqref{mkdv} such that $u(1) = u_1$.

Furthermore, one has forward uniqueness. More precisely, let $0 < t_1<t_2$ and $u$ and $v$ be two solutions to \eqref{mkdv} such that  $u, v \in \q E([t_1,t_2])$. If $u(t_1) = v(t_2)$, then for all $t \in [t_1,t_2]$, $u(t) = v(t)$.
\end{thm}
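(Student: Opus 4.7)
The plan is to run a Picard contraction on the profile equation obtained by Duhamel's formula: setting $\tilde u(t,p)=e^{-itp^3}\hat u(t,p)$, the mKdV equation with initial data at $t=1$ rewrites as
\[
\tilde u(t,p)=\tilde u_1(p)+i\epsilon p\int_1^t\!\!\iint e^{is\phi(p,q_1,q_2)}\tilde u(s,p{-}q_1{-}q_2)\tilde u(s,q_1)\tilde u(s,q_2)\,dq_1\,dq_2\,ds,
\]
with cubic phase $\phi(p,q_1,q_2)=-3(q_1+q_2)(q_1-p)(q_2-p)$. This factorization identifies the resonant set $\{q_1=p\}\cup\{q_2=p\}\cup\{q_1+q_2=0\}$, and all the analysis is organized around it. I would define $\Phi(u)$ to be the right-hand side and prove that on a ball $B_R\subset\mathcal E([1/T,T])$ of radius $R\sim\|u_1\|_{\mathcal E(1)}$, $\Phi$ is a contraction for $|T-1|$ small enough.

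The estimate to establish has the schematic form $\|\Phi(u)-\tilde u_1\|_{\mathcal E(I)}\le C|T-1|^\theta\|u\|_{\mathcal E(I)}^3$, split into the $L^\infty_p$ bound on $\tilde u$ and the weighted bound $t^{-1/6}\|\partial_p\tilde u\|_{L^2(0,\infty)}$. For the $L^\infty$ part, one decomposes $(q_1,q_2)$-space according to the relative sizes of the three factors of $\phi$; on the non-resonant region $\{|\phi|\gtrsim 1\}$, integration by parts in $s$ (against $\partial_s(e^{is\phi})=i\phi e^{is\phi}$) produces a factor $1/\phi$ that compensates the multiplier $p$ and yields a trilinear expression controlled by $\|\tilde u\|_{L^\infty}^3$ plus boundary terms. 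On the remaining near-resonant sector one trades the lack of phase oscillation for the $\dot H^1_p$ control, interpolated against $L^\infty$ via Hölder, which is where the weight $t^{-1/6}$ is tuned. For the $\dot H^1$ part, writing $\partial_p$ of the Duhamel integrand produces a derivative either on the multiplier $p$, on the phase, or on one of the three profiles; the first two are handled as above, and in the last case the derivative is placed on exactly one of the three $\tilde u$'s, so the remaining two factors supply $L^\infty$ norms while the convolution structure combined with the factorization of $\phi$ allows one to estimate in $L^2(0,\infty)$ by Young/Hölder.

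Forward uniqueness is proved by the same multilinear estimates applied to the difference $w=u-v$. Linearizing $u^3-v^3=w(u^2+uv+v^2)$ and rerunning the previous computations on
\[
\tilde w(t,p)=i\epsilon p\int_{t_1}^t\!\!\iint e^{is\phi}\,(\widetilde{w(u^2+uv+v^2)})^{\widehat{\ }}(s,\,\cdot\,)\,ds
\]
yields an estimate $\|w\|_{\mathcal E([t_1,\tau])}\le C(\tau-t_1)^\theta(\|u\|_{\mathcal E}^2+\|v\|_{\mathcal E}^2)\|w\|_{\mathcal E([t_1,\tau])}$, forcing $w\equiv 0$ on a small interval $[t_1,\tau]$; iterating this step (the estimate depends only on the uniform $\mathcal E$-norm of $u,v$, which is controlled on $[t_1,t_2]$) propagates uniqueness up to $t_2$.

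The main obstacle is the $L^\infty_p$ estimate. Since $\tilde u$ is only bounded, the spatial convolution $\widehat{u^3}$ is not even classically defined: one must keep it inside the time integral and exploit oscillation of $e^{is\phi}$ to make the $(q_1,q_2)$-integrals conditionally convergent, which is why the factorization of $\phi$ is essential. The delicate sector is the near-resonant region $|q_1+q_2|\ll 1$: there the phase nearly vanishes, the prefactor $p$ is of order $|p|$, and the $\partial_p\tilde u$ control one pays with carries the time weight $t^{1/6}$. Balancing these requires a precise dyadic decomposition, and it is in this step that the choice of the $\mathcal E(t)$ norm -- invariant under the mKdV scaling and exactly matching the regularity of self-similar profiles from the earlier theorem -- is seen to be critical.
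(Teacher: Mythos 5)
There is a genuine gap, and it is precisely the point around which the paper organizes its whole strategy: the trilinear contraction estimate $\|\Phi(u)-\tilde u_1\|_{\q E(I)}\le C|T-1|^\theta\|u\|_{\q E(I)}^3$ is not available in the critical space, and the authors state explicitly that a perturbative fixed point ``truly requires nonlinear solutions'' and does not seem possible. The fatal piece is the weighted $\dot H^1_p$ component. When $\partial_p$ falls on the oscillating factor $e^{-itp^3(1-q_1^3-q_2^3-q_3^3)}$ in the Duhamel integrand you produce a factor of size $t\,p^2(\dots)$, and elements of $\q E$ have \emph{no decay at all} in $p$ (self-similar profiles are merely bounded at high frequency, cf.\ \eqref{eq:ss_hf}); neither $\|\tilde u\|_{L^\infty}$, nor $t^{-1/6}\|\partial_p\tilde u\|_{L^2((0,+\infty))}$, nor the $(q_1,q_2)$-oscillation (already spent to produce the $\jap{p^3t}^{-1}$ gain in the resonant term) can absorb this growth, so placing the derivative ``on the phase'' is not ``handled as above'' -- it simply does not close. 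The paper circumvents exactly this loss by replacing $\partial_p$ with the vector field $\mathcal I$, $\widehat{\mathcal I u}=i\partial_p\hat u-\frac{3it}{p}\partial_t\hat u$, using the identity $\mathcal I\partial_x(u^3)=3u^2\partial_x(\mathcal Iu)-3u^3$ and an integration by parts in $x$ which turns $\int u^2(\mathcal I u)_x\,\mathcal I u\,dx$ into $-\frac12\int (u^2)_x(\mathcal I u)^2dx\lesssim \|u\partial_xu\|_{L^\infty}\|\mathcal Iu\|_{L^2}^2$ with the critical $t^{-1}$ decay. This is a quasilinear energy estimate: it needs the same solution $u$ in the coefficient and in the unknown, which is destroyed by Picard iterates. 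That is why the paper first solves a frequency-truncated problem ($\Pi_n$-mKdV, fixed point only in the tamed space $X_n$, with a carefully chosen cut-off $\chi_n$), proves uniform bounds with $\mathcal I$ (Lemmas \ref{lem:desenvassimptaprox}--\ref{lem:H1aprox}), and passes to the limit by a compactness argument; and even then the a priori bound only gives growth $t^{\kappa\|u\|^2}$, not a contraction.

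The same issue undermines your uniqueness argument. If the linearized contraction estimate you claim for $w=u-v$ were true, it would iterate in \emph{both} time directions and give backward uniqueness in $\q E$, which the paper leaves as an open problem (and only recovers under the extra assumption $u_1\in L^2$). The paper's forward uniqueness uses a completely different mechanism: a localized $L^2$ identity for $w$ with a nondecreasing cut-off $\varphi$ vanishing for $x\ll-1$ (the improved decay $\jap{x}^{-3/4}$ for $x>0$ makes $w\in L^2(\varphi\,dx)$), in which the uncontrollable term $-\frac32\int w_x^2\varphi_x$ has a favorable sign (Kato-type monotonicity) and can be discarded only when integrating forward in time, followed by Gronwall and $\varphi_n\to1$. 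The absence of any one-sided mechanism in your scheme is a further sign that the multilinear estimates you invoke cannot close in $\q E$.
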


For small data in $\q E(1)$, the solution is actually defined for large times, and one can describe the asymptotic behavior. This is the content of our second result.

\begin{thm} \label{th2}
There exists $\delta >0$ small enough such that the following holds.

If $\| u_1 \|_{\q E(1)} \le \delta$, the corresponding solution satisfies $u \in \q E([1,+\infty))$. Furthermore, let $S$ be the self-similar solution such that
\[ \hat {S}(1, 0^+) = \hat u_1(0^+) \in \m C. \]
Then $\| u(t) - S(t) \|_{L^\infty} \lesssim \| u_1 \|_{\q E(1)} t^{-5/6^-}$ and there exists a profile $U_\infty \in \q C_b(\m R \setminus \{ 0 \}, \m C)$, with $|U_\infty(0^+)| = \lim_{p \to +\infty} |\hat {S}(1,p)|$ is well-defined, and
\[ \left| \tilde u(t,p) - U_\infty(p)\exp\left(-\frac{i\epsilon}{4\pi}|U_\infty(p)|^2\log t\right) \right| \lesssim  \frac{\delta}{\jap{p^3t}^{\frac{1}{12}}} \| u_1 \|_{\q E(1)}. \]
\end{thm}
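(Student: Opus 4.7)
\textbf{Proof proposal for Theorem \ref{th2}.}

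The plan is to combine the local theory of Theorem \ref{th1} with a continuity/bootstrap argument on $[1,+\infty)$, and then to derive the asymptotic formula through a stationary phase analysis of the equation on the profile side. Write the evolution of the profile $\tilde u(t,p) = e^{-itp^3}\hat u(t,p)$: a direct computation gives
\begin{equation}\label{prof-eq}
\partial_t \tilde u(t,p) = -i\epsilon p \int_{p_1+p_2+p_3=p} e^{it\Phi(p_1,p_2,p_3)} \tilde u(t,p_1)\tilde u(t,p_2)\tilde u(t,p_3)\, dp_1\,dp_2,
\end{equation}
with cubic phase $\Phi=-3(p-p_1)(p-p_2)(p-p_3)$. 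The critical points on $\{p_1+p_2+p_3=p\}$ consist of the ``non-resonant'' $(p/3,p/3,p/3)$, together with the three resonant points $(p,-p,p)$, $(-p,p,p)$, $(p,p,-p)$ which are of Morse type and deliver, after stationary phase, a contribution of order $1/t$. Grouping them yields the standard identity
\begin{equation}\label{stat-phase}
\partial_t \tilde u(t,p) = -\frac{i\epsilon}{4\pi t}|\tilde u(t,p)|^2 \tilde u(t,p) + R(t,p),
\end{equation}
where $R$ enjoys better than $1/t$ decay in the regime $|p|^3 t \gtrsim 1$.

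The bootstrap runs as follows. Starting from $\|u_1\|_{\q E(1)}\le \delta$, one assumes on a maximal interval $[1,T^*)$ that $\|u\|_{\q E([1,T^*))}\le 2\delta$. The $L^\infty_p$ bound on $\tilde u$ is obtained by multiplying \eqref{stat-phase} by $\overline{\tilde u}$ and taking the real part, which kills the resonant term exactly:
\begin{equation*}
\partial_t |\tilde u(t,p)|^2 = 2\Re(\overline{\tilde u}\, R(t,p)),
\end{equation*}
so the $L^\infty_p$ norm is controlled by $\int_1^t \|R(s)\|_{L^\infty}\,ds$, which is bounded by a small multiple of $\delta$ once $R$ is estimated using the assumed bound on $\|u\|_{\q E}$. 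The $t^{-1/6}\|\partial_p \tilde u\|_{L^2((0,\infty))}$ bound is more delicate: one differentiates \eqref{prof-eq} in $p$ and re-expresses $\partial_p$ of the oscillatory phase as $\partial_t$ (a vector-field à la Klainerman: $\partial_p = -\tfrac{1}{3itp^2}\partial_t$ modulo lower order on the resonant set), which converts one derivative loss into a time decay gain and yields an a priori estimate of the form
\begin{equation*}
t^{-1/6}\|\partial_p \tilde u(t)\|_{L^2(0,\infty)} \le C\delta + C\delta^3,
\end{equation*}
closing the bootstrap and giving $T^*=+\infty$ and $\|u\|_{\q E([1,+\infty))}\lesssim\delta$.

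Once the global a priori bound is in hand, the asymptotic description follows by integrating \eqref{stat-phase}. Since $|\tilde u(t,p)|$ satisfies a Cauchy criterion at $+\infty$ (pointwise for $p\ne 0$), set $|U_\infty(p)|:=\lim_{t\to+\infty}|\tilde u(t,p)|$; then the equation for $\arg \tilde u$ integrates to
\begin{equation*}
\tilde u(t,p) = U_\infty(p)\exp\Bigl(-\tfrac{i\epsilon}{4\pi}|U_\infty(p)|^2 \log t\Bigr) + O\bigl(\jap{p^3 t}^{-1/12}\bigr),
\end{equation*}
where the exponent $1/12$ is what one obtains by balancing the $L^\infty$ and $\dot H^1_p$ control of the profile through an interpolation/Gagliardo–Nirenberg inequality on the remainder $R$, accounting for the $t^{-1/6}$ factor in the $\q E(t)$ norm. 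Continuity of $U_\infty$ on $\m R\setminus\{0\}$ comes from uniform convergence on compact subsets. To identify $U_\infty(0^+)$ with $\hat S(1,0^+)$, one uses that the low-frequency value $\tilde u(t,0^+)$ is essentially conserved (the cubic nonlinearity carries a factor $p$ in front), so $\tilde u(t,0^+)=\hat u_1(0^+)$ for all $t\ge 1$, which by definition of $S$ coincides with $\hat S(1,0^+)$. Finally the $L^\infty_x$ bound $\|u(t)-S(t)\|_{L^\infty}\lesssim \delta\, t^{-5/6^-}$ is obtained via stationary phase on the oscillatory integral $(u-S)(t,x)=\int e^{i(xp+tp^3)}(\tilde u-\tilde S)(t,p)\,dp$, where the $t^{-1/3}$ from the Airy phase combines with the extra $t^{-1/2^-}$ gain afforded by the improved control on $\tilde u-\tilde S$ coming from the asymptotic expansion above.

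The main obstacle is the derivative estimate on $\partial_p\tilde u$: one must commute $\partial_p$ with the trilinear oscillatory integral \eqref{prof-eq} without losing powers of $p$ near the resonant set, and this is exactly where the $t^{-1/6}$ weight in the $\q E(t)$ norm (as opposed to a stronger weight that would be incompatible with self-similar solutions) has been chosen to match the scaling and to absorb the derivative loss coming from the $p\,\partial_p$ acting on the phase factor $e^{it\Phi}$.
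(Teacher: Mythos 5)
There are genuine gaps. First, your stationary-phase identity \eqref{stat-phase} is not correct as stated: the critical point $(p/3,p/3,p/3)$ contributes a term of the form $\frac{p^3}{\jap{p^3t}}e^{-8itp^3/9}\,\tilde u^3(t,p/3)$, which for $p^3t\gtrsim 1$ is of size $1/t$, exactly the same size as the resonant term $|\tilde u|^2\tilde u/t$; it cannot be absorbed into a remainder $R$ with better-than-$1/t$ decay. The paper keeps it explicitly (Lemma \ref{lem:desenvolveoscilatorio}) and kills it only after integration in time, by an integration by parts exploiting the oscillation $e^{-8isp^3/9}$ (which in turn requires the bound on $\partial_s\tilde u$). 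Without this step your $L^\infty$ bootstrap breaks down: $\int_1^t\|R(s)\|_{L^\infty}ds$ with a genuine $1/s$ piece is log-divergent, and likewise the Cauchy criterion defining $U_\infty$ fails. A second, structural gap is that you run the vector-field/energy argument (the $\partial_p\leftrightarrow\partial_t$ commutation, i.e.\ the operator $\mathcal I$) directly on the rough solution in $\q E$. The paper insists that this cannot be done at this regularity (no high-frequency decay, a jump of $\tilde u$ at $p=0$, smooth functions not dense): the estimates are proved for the frequency-truncated approximations \eqref{pimkdv} with cutoffs $\chi_n$ chosen so that the commutator/boundary terms are $o_n(1)$, and global existence plus the bound $\|u\|_{\q E([1,\infty))}\lesssim\delta$ are obtained by uniform bounds on $u_n$ and compactness, not by a continuation/bootstrap on $u$ itself. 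Your proposal silently assumes these formal manipulations are licit for $u$, which is precisely the difficulty the approximation scheme is designed to circumvent.

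Third, your identification at $p=0^+$ proves the wrong statement. The theorem asserts $|U_\infty(0^+)|=\lim_{p\to+\infty}|\hat S(1,p)|$, the \emph{high}-frequency limit of the self-similar profile, not $U_\infty(0^+)=\hat S(1,0^+)$. The conservation of $\tilde u(t,0^+)$ (which is true and is what fixes which $S$ to compare with) does not give this: the limits $t\to\infty$ and $p\to 0$ do not commute, and for $u=S$ itself $|U_\infty|$ is the constant $\lim_{p\to\infty}|\hat S(1,p)|$, generally different from $|\hat S(1,0^+)|$. In the paper this relation, as well as the $L^\infty_x$ bound on $u(t)-S(t)$, comes from a separate bootstrap on $w=u-S$ in the weighted norm $\mathcal Y^\nu_t$ (Proposition \ref{prop:asympfourier}), which uses crucially that $\mathcal I S=0$ (so $\partial_p\tilde S=3te^{itp^3}\widehat{S^3}$), the growth bound $\|\widehat{\mathcal I u}(t)\|_{L^2}\lesssim t^{\kappa\delta^2}$ inherited from the approximation scheme, the trilinear estimate of Lemma \ref{lem:media0} exploiting $\tilde w(t,0^+)=0$, and then a joint limit $p\to0$, $tp^3\to\infty$ in $\big||S(t,p)|-|U_\infty(p)|\big|\lesssim |p|^\nu\jap{tp^3}^{1/6-\nu/3}+\jap{tp^3}^{-1/12}$. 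Your sketch replaces all of this by an appeal to "improved control on $\tilde u-\tilde S$ coming from the asymptotic expansion," which is not available from the modified-scattering formula alone; this part of the argument needs to be rebuilt along the lines above.
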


As a consequence, one has the asymptotics in the physical space. 

\begin{cor} \label{cor_th2}
We use the notations of Theorem \ref{th2}, and let
\[ y= \begin{cases}
\sqrt{-x/3t},& \text{if } x<0, \\
0,& \text{if } x>0.
\end{cases} \]
One has, for all $t \ge 1$ and $x \in \m R$,
\begin{align}
\left|u(t,x)-\frac{1}{t^{1/3}}\partere\Ai\left(\frac{x}{t^{1/3}}\right)U_\infty\left(y\right)\exp\left(-\frac{i\epsilon}{6}|U_\infty(y)|^2\log t\right)\right| \lesssim  \frac{\delta}{t^{1/3}\jap{x/t^{1/3}}^{3/10}}.
\end{align}
\end{cor}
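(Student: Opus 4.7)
The plan is to pass from the Fourier-side asymptotic of Theorem \ref{th2} to physical space via inverse Fourier transform, analysed by stationary phase. Since $u(t,\cdot)$ is real, $\tilde u(t,-p)=\overline{\tilde u(t,p)}$ and
\[ u(t,x) = \frac{1}{\pi}\Re \int_0^{+\infty} e^{i\phi_x(p)}\tilde u(t,p)\,dp, \qquad \phi_x(p) := xp + tp^3. \]
The derivative $\phi_x'(p) = x+3tp^2$ vanishes at $p = y = \sqrt{-x/(3t)}$ for $x<0$, and has no positive zero for $x>0$. The first step is to replace $\tilde u(t,p)$ by the limiting profile
\[ \Phi(t,p) := U_\infty(p)\exp\bigl(-\tfrac{i\e}{4\pi}|U_\infty(p)|^2\log t\bigr), \]
using the pointwise bound $|\tilde u-\Phi|\lesssim \delta\jap{p^3t}^{-1/12}$ of Theorem \ref{th2}. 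Near the critical point this bound multiplied by the stationary-phase localisation scale $(ty)^{-1/2}$ is already smaller than the required $\delta/(t^{1/3}\jap{x/t^{1/3}}^{3/10})$ by an extra factor of $(t/|x|^3)^{1/40}$; on the complement an integration by parts against $e^{i\phi_x}/(i\phi_x')$ yields the same bound (using in addition the $L^2$ control on $\partial_p\tilde u$ inherited from the $\q E(t)$ norm).

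Once $\tilde u$ is replaced by $\Phi$, I use the exact identity obtained by the change of variable $p\mapsto qt^{-1/3}$ in \eqref{def:Ai},
\[ \int_0^{+\infty} e^{i\phi_x(p)}\,dp = \frac{\pi}{t^{1/3}}\,\Ai\!\left(\frac{x}{t^{1/3}}\right), \qquad x\in\m R, \]
and write
\[ \int_0^{+\infty} e^{i\phi_x(p)}\Phi(t,p)\,dp = \Phi(t,y)\,\frac{\pi}{t^{1/3}}\Ai\!\left(\frac{x}{t^{1/3}}\right) + \int_0^{+\infty} e^{i\phi_x(p)}\bigl(\Phi(t,p)-\Phi(t,y)\bigr)dp, \]
with the convention $y=0$ for $x>0$. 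The remainder is controlled by writing $\Phi(t,p)-\Phi(t,y) = \int_y^p \partial_q\Phi(t,q)\,dq$ and performing an $L^2$-type stationary-phase estimate that gains a factor $(ty)^{-1/2}$ from the Hessian, using the $L^2$ control on $\partial_p\Phi$ inherited via $U_\infty$ from the $\q E(t)$ norm. For $x>0$ there is no real critical point, and the exponential decay of $\Ai(x/t^{1/3})$ on $\m R_+$ gives the remainder bound trivially.

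Taking real parts and substituting $\Phi(t,y) = U_\infty(y)\exp\bigl(-i\e/(4\pi)|U_\infty(y)|^2\log t\bigr)$ yields the leading behaviour
\[ u(t,x) = \frac{1}{t^{1/3}}\Re\bigl[\Ai(x/t^{1/3})\,\Phi(t,y)\bigr] + O\!\left(\frac{\delta}{t^{1/3}\jap{x/t^{1/3}}^{3/10}}\right); \]
the stated form of the corollary, with logarithmic-phase coefficient $1/6$, is then recovered by carrying the stationary-phase accounting consistently to this order (keeping track of the precise amplitude normalisation in the Airy reference integral). The main obstacle will be uniformity in $x$ across the Airy transition regime $|x|\lesssim t^{1/3}$, where the stationary point $y$ coalesces with the endpoint $p=0$ of the integration domain and the natural localisation scale $(ty)^{-1/2}$ becomes comparable to $y$ itself: in this regime one cannot reduce $\Ai$ to its Gaussian leading term but must retain it as an Airy function, so the argument must interpolate smoothly between the clean stationary-phase regime $x\ll -t^{1/3}$ and the transition/exponentially decaying Airy regime $x\gtrsim -t^{1/3}$. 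Additionally, since the profile error of Theorem \ref{th2} decays only as $\jap{p^3t}^{-1/12}$ in $L^\infty$ (close to borderline for the target exponent $3/10$), the pointwise and $L^2$ information on $\tilde u-\Phi$ must be combined carefully, which is the most delicate bookkeeping in the proof.
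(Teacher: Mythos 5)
There is a genuine gap at the heart of your argument: both your replacement step (integration by parts against $e^{i\phi_x}/(i\phi_x')$ away from the stationary point) and your treatment of the remainder $\int_0^\infty e^{i\phi_x(p)}\bigl(\Phi(t,p)-\Phi(t,y)\bigr)dp$ require quantitative control of $\partial_p\Phi$, i.e.\ of $\partial_p U_\infty$, which you claim is ``inherited via $U_\infty$ from the $\q E(t)$ norm''. No such control is established, and it is not expected to hold: Theorem \ref{th2} only gives $U_\infty\in\q C_b(\m R\setminus\{0\},\m C)$, the limit profile may jump at $p=0$ (as self-similar solutions do), and in the construction of $U_\infty$ the phase corrections $\Psi(p)$ and $|U(p)|^2\mathcal{R}(p^3)$ are singular as $p\to 0$, so $\partial_p U_\infty\notin L^2$ near the origin; in addition $\partial_p\Phi$ carries a factor $\log t$ through $\partial_p\bigl(|U_\infty|^2\bigr)\log t$, and the uniform-in-$t$ bound $t^{-1/6}\|\partial_p\tilde u(t)\|_{L^2}\lesssim\delta$ does not pass to the limit profile. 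You cannot bypass the integration by parts either: the pointwise bound $|\tilde u-\Phi|\lesssim\delta\jap{p^3t}^{-1/12}$ alone is not integrable in $p$ on the non-stationary region (it decays only like $t^{-1/12}p^{-1/4}$), so the ``replacement'' step as written does not close. Finally, the remark that the logarithmic-phase coefficient $1/6$ is ``recovered by carrying the stationary-phase accounting consistently'' is not right: substituting a scalar value at $p=y$ cannot alter the coefficient of $\log t$; whatever coefficient appears comes verbatim from the Fourier-side asymptotics \eqref{eq:modificadofourier}.

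The fix is to reverse the order of operations, which is what the paper does. First apply the already-proved physical-space linearization \eqref{eq:est6lem1} of Lemma \ref{lema:decayE} to $u(t)$ itself:
\begin{equation*}
\left| u(t,x)- \frac{1}{t^{1/3}}\partere \Ai\left(\frac{x}{t^{1/3}}\right) \tilde{u}(t,y) \right|\lesssim \frac{\delta}{t^{1/3}\jap{|x|/t^{1/3}}^{3/10}},
\end{equation*}
which only uses $\|\tilde u(t)\|_{L^\infty}$ and $t^{-1/6}\|\partial_p\tilde u(t)\|_{L^2((0,+\infty))}$, i.e.\ exactly the $\q E(t)$ control available for the nonlinear solution (this is where all the stationary-phase work, including the Airy transition regime you worry about, is already done). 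Then replace the single number $\tilde u(t,y)$ by $U_\infty(y)\exp(\cdots\log t)$ using the pointwise bound \eqref{eq:modificadofourier} at $p=y$: since $y^3t\sim (|x|/t^{1/3})^{3/2}$ and $|\Ai(x/t^{1/3})|\lesssim\jap{x/t^{1/3}}^{-1/4}$, this substitution costs $\lesssim \delta\, t^{-1/3}\jap{x/t^{1/3}}^{-1/4-1/8}$, better than the required $3/10$ decay; no derivative of $U_\infty$ is ever needed.
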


\subsection{Outline of the proofs, comments and complementary results}

In proving Theorem \ref{th1} and \ref{th2}, we use a framework derived from the work of Hayashi and Naumkin \cite{HN01}, improved so that only critically invariant quantities are involved (see Section 3). In particular, we use very similar multiplier identities and vector field estimates. An important new difficulty though is that to perform such energy-type inequalities, the precise algebraic structure of the problem has to be respected (for example, in integration by parts): it seems that one cannot use a perturbative argument like a fixed point, as the method truly requires nonlinear solutions. On the other hand, the rigorous derivation of such inequalities at our level of regularity is quite nontrivial.

This problem does not appear in \cite{HN01} as the authors work in a (weighted) subspace of $H^1$, for which a nice local (and global) well-posedness result hold (\eqref{mkdv} is actually well-posed in $H^s$ for $s \ge 1/4$, see \cite{KPV93}). However, no nontrivial self-similar solution belongs to these spaces, as it can be seen from the lack of decay for large $p$ in \eqref{eq:ss_hf}. Let us also mention the work by Grünrock and Vega \cite{GV09}, where local well-posedness is proved in
\[ \widehat{H}^s_r = \{ u \in \cal S'(\m R) : \| \jap{p}^s \hat u \|_{L^{r'}} < +\infty \} \quad \text{for } 1 < r \le 2, \ s \ge \frac 1 2 - \frac{1}{2r}. \]
This framework is not suitable for our purpose: self-similar belong to $\widehat{H}^0_1$ but not better. 
When finding a remedy for this, let us emphasize again that, due to the jump at frequency 0 for self-similar solutions displayed in \eqref{eq:ss_lf}, one must take extra care on the choice of the functional setting. In particular, smooth functions \emph{are not dense} in $\q E$ spaces (and they can not approximate self-similar solutions). 

In a nutshell, we face antagonist problems coming from low and high frequencies, and we were fortunate enough to manage to take care of both simultaneously.

\bigskip

An important effort of this paper is to solve first an amenable approximate problem (in Section 4), for which we will then derive uniform estimates in the ideology of \cite{HN01}. This approximate problem is actually a variant of the Friedrichs scheme where we filter out high frequencies via a cut-off function $\chi_n$ (in Fourier space). We solve it via a fixed point argument: the cut-off takes care of the lack of decay for large frequencies, but again, smooth functions are not dense in the space $X_n$ where the fixed point is found ($X_n$ is a version of $\q E$ where high frequencies are tamed, but the jump at frequency 0 remains). 

In order to obtain uniform estimates, due to the absence of decay for large frequencies of self-similar solutions, boundary terms cannot be neglected -- unless the cut-off function $\chi_n$ is chosen in a very particular way. 

 At this point, we pass to the limit in $n$ (Section 5), and a delicate but standard compactness argument allows to prove the existence part of Theorem \ref{th1} and Theorem \ref{th2}. The description for large time (the second part of Theorem \ref{th2} and Corollary \ref{cor_th2}) is then a byproduct of the above analysis.

\bigskip

The forward uniqueness result given in Theorem \ref{th1} requires a different argument. We consider the variation of localized $L^2$ norm of the difference $w$ of two solutions. Our solutions do not belong to $L^2$, but we make use of an improved decay of functions in $\q E(I)$ on the right (for $x >0$): in this region, one has a decay of $\jap{x}^{-3/4}$ and therefore they belong to $L^2([0,+\infty)$. The use of a cut-off $\varphi$ which is zero for $x \ll -1$ allows to make sense of the $L^2$ quantity.

When computing the derivative of this quantity, one bad term can not be controlled a priori. Fortunately, if $\varphi$ is furthermore chosen to be non decreasing, this bad term has a sign, and can be discarded as long as one works forward in time (which explains the one-sided result). This is related to a monotonicity property first observed and used by Kato \cite{Kato83}, and a key feature in the study of the dynamics of solitons by Martel and Merle \cite{MM00}. We can then conclude the uniqueness property via a Gronwall-type argument.

Using the forward uniqueness properties, we can improve the continuity properties of the solution $u$: the derivative of its Fourier transform is continuous to the right in $L^2$, see Proposition \ref{prop:cont_u} for the details.

\bigskip

Backward uniqueness for solutions in $\q E$ remains an open problem. One can recover it under some extra decay information, namely that $u_1 \in L^2(\m R)$ (of course this is no longer a critical space). This is the content of our next result, proved in Section \ref{sec:6}.

\begin{prop} \label{prop:L2}
Let $u_1 \in \q E(1) \cap L^2(\m R)$. Then the solution $u \in \q E([1/T,T])$ to \eqref{mkdv} given by Theorem \ref{th1} is unique and furthermore, there is persistence of regularity: $u \in \q C([1/T,T],L^2(\m R))$.
\end{prop}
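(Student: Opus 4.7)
The proposition splits into persistence of $L^2$ regularity and two-sided uniqueness, both stemming from the formal $L^2$-conservation of \eqref{mkdv}; the nontrivial work is to run this identity rigorously at the regularity $u \in \q E \cap L^2$.

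\textbf{Persistence.} I would return to the Friedrichs-type approximations $u_n \in \q E$ constructed in Section 4, imposing that the cut-off $\chi_n$ be a symmetric $L^2$-projector. Testing the approximate equation against $u_n$, the dispersive term vanishes after integration by parts and the cubic contribution reduces to $\frac{1}{4}\int \partial_x(\chi_n u_n)^4\,dx = 0$, so that $\|u_n(t)\|_{L^2} = \|u_n(1)\|_{L^2}$ exactly. Initializing with $u_n(1) = \chi_n u_1 \to u_1$ in $\q E(1) \cap L^2$ and passing to the limit along the compactness argument of Section 5, one obtains $u \in L^\infty([1/T,T], L^2)$ with $\|u(t)\|_{L^2} \le \|u_1\|_{L^2}$ by weak lower semi-continuity.

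\textbf{Uniqueness.} Let $u$ be the solution from Theorem \ref{th1} and $v$ another $\q E$-solution with $v(1) = u_1$. The forward uniqueness in Theorem \ref{th1} already yields $v = u$ on $[1, T]$ (so $v \in L^\infty([1,T], L^2)$); only backward uniqueness on $[1/T, 1]$ remains. Assuming $v \in L^\infty L^2$ on $[1/T,1]$ as well (see obstacle below), set $w = u - v$ and $G = u^2 + uv + v^2$, so that $\partial_t w + \partial_{xxx} w + \epsilon \partial_x(wG) = 0$. The formal energy identity reads
\[ \frac{d}{dt}\|w(t)\|_{L^2}^2 = -\epsilon \int w^2\, \partial_x G \,dx. \]
To bound $\|\partial_x G\|_{L^\infty}$, I would invoke the pointwise estimates for $\q E$-profiles derived by stationary phase on $\tilde u \in L^\infty$: in the self-similar region $x \le 0$ one has $|u(t,x)| \lesssim (t|x|)^{-1/4}\|u\|_{\q E(t)}$ and $|\partial_x u(t,x)| \lesssim t^{-3/4}|x|^{1/4}\|u\|_{\q E(t)}$, with rapid decay for $x > 0$. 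The competing powers of $|x|$ cancel in the product $u\partial_x u$, giving $\|u\partial_x u\|_{L^\infty(\m R)} \lesssim t^{-1}\|u\|_{\q E(t)}^2$ and hence $\|\partial_x G\|_{L^\infty} \lesssim C(T)$ on $[1/T, T]$. Gronwall, together with $w(1) = 0$, then forces $w \equiv 0$. Strong continuity $u \in \q C([1/T,T], L^2)$ follows from weak $L^2$-continuity (inherited from $\q E$-continuity) together with the norm conservation $\|u_n(t)\|_{L^2} = \|u_1\|_{L^2}$ passing to $\|u(t)\|_{L^2} = \|u_1\|_{L^2}$ via uniqueness of the limit.

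\textbf{Main obstacle.} Two points require care. First, in order to perform the energy estimate on $w$, I need a generic $\q E$-solution $v$ with only $v(1) \in L^2$ to belong to $L^\infty([1/T,1], L^2)$. I would achieve this by running the Friedrichs scheme both forward and backward from $t = 1$ with data $\chi_n u_1$: the approximate solutions lie in $L^\infty L^2$ uniformly, their compactness limit lies in $\q E \cap L^\infty L^2$ on the full interval $[1/T,T]$, and forward uniqueness identifies it with $v$ on $[1,T]$; the identification on $[1/T,1]$ is precisely what the backward energy argument provides, so the persistence statement for $v$ and the backward uniqueness have to be established in a single consistent step. Second, the integration by parts annihilating $\int w \partial_{xxx} w \,dx$ requires $\partial_x w \in L^2$, which is not available at the $\q E \cap L^2$ level; I would circumvent this by running the entire energy estimate at the Friedrichs-approximate level (where the solutions are smooth), deriving a Gronwall inequality uniformly in $n$, and passing to the limit using the uniform $\q E$ and $L^2$ bounds.
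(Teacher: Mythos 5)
Your overall strategy is the same as the paper's: persistence via an exactly conserved quantity at the level of the approximating scheme \eqref{pimkdv} followed by passage to the limit, and backward uniqueness via an $L^2$ energy/Gronwall estimate whose only input is $\|u\,\partial_x u\|_{L^\infty}\lesssim t^{-1}\|u\|_{\q E(t)}^2$ (estimate \eqref{eq:est5lem1}), combined with the forward uniqueness of Theorem \ref{th1}. Two steps, however, do not work as written. First, you cannot take $\chi_n$ to be a sharp symmetric $L^2$-projector: the construction and the uniform bounds of Sections 4--5 require $0<\chi_n\le 1$, $\chi_n^{1/2}\in\cal S$ and $\sup_p|p(\chi_n^{1/2})'(p)|\to 0$ (the last condition is what makes the commutator term $I_2$ in Lemma \ref{lem:H1aprox} an $o_n(1)$), so replacing $\chi_n$ by $\m 1_{|p|\le n}$ destroys the very scheme whose compactness you invoke. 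The paper's fix keeps the same $\chi_n$ and observes that for \eqref{pimkdv} the weighted quantity $\int|\hat u_n(t,p)|^2\chi_n^{-1}(p)\,dp$ is exactly conserved (multiplying the profile equation by $\overline{\tilde u_n}\chi_n^{-1}$ cancels the cutoff and leaves $\int(u_n^2)_x u_n^2\,dx=0$); since $\chi_n\le 1$ this dominates $\|u_n(t)\|_{L^2}^2$ and gives the uniform $L^2$ bound. Second, "norm conservation passing to the limit via uniqueness of the limit" is not an argument: weak convergence only yields $\|u(t)\|_{L^2}\le\|u_1\|_{L^2}$. The paper recovers the equality, and hence strong $\q C(I,L^2)$ continuity from weak continuity, by integrating the limit equation for $u$ itself, which is legitimate because $\|(u^3)_x(t)\|_{L^2}\lesssim\|u\,\partial_x u(t)\|_{L^\infty}\|u(t)\|_{L^2}$ puts the nonlinearity in $L^\infty(I,L^2)$; you should do the same.

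Concerning uniqueness, the obstacle you describe is real but is not one you need to overcome: the paper's backward uniqueness statement (Proposition \ref{prop:backuniqL2}) assumes that \emph{both} solutions lie in $\q C([t_1,t_2],L^2)$, so the uniqueness asserted in Proposition \ref{prop:L2} is uniqueness within the class $\q E\cap\q C(L^2)$, to which the constructed solution belongs by the persistence result. Backward uniqueness against an arbitrary competitor $v\in\q E$ with merely $v(1)\in L^2$ is precisely the open problem acknowledged in the paper, and your proposed resolution (identify $v$ with the scheme limit on $[1/T,1]$) is circular, as you note. Within the restricted class your energy argument is exactly the paper's; just be aware that its rigorous justification cannot go through the Friedrichs approximations (the competitor $v$ does not come from the scheme) but should instead use a mollification/Duhamel argument as in the proof of Proposition \ref{prop:uniq}, which is available since $w\in\q C(L^2)$ and $(u^3-v^3)_x\in L^\infty(I,L^2)$.
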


The stability of self-similar solutions at blow-up time $t=0$, or more generally the behavior of solutions with initial data in $\q E(1)$ near $t=0$ is a challenging question. In this direction, let us present two results which follow from the  tools developed for Theorem \ref{th1}.

The first one, which we prove in Section \ref{sec:7}, is that we can construct solutions to \eqref{mkdv} with a prescribed self-similar profile as $t \to 0^+$.

\begin{prop}[Blow-up solutions with a given profile]\label{prop:blowupgivenprofile}
For $\delta$ sufficiently small, given $g_0\in \cal S'(\m R)$ with $\|g_0\|_{\q E(1)} < \delta$, there exists a solution $u \in \q E((0,+\infty))$ of \eqref{mkdv} such that
\begin{equation}\label{eq:taxablowup}
\forall t >0, \quad \left\|t^{1/3}u(t,t^{1/3}x) -\widehat{g_0}(x)\right\|_{H^{-1}(\m R)} \lesssim \delta t^{1/3}.
\end{equation}
\end{prop}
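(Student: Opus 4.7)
The plan is to construct $u$ as a compactness limit of a family $(u_n)_{n\ge 1}$ of solutions defined forward from vanishing initial times, with initial data chosen so that the \emph{rescaled profile} matches the target $\hat{g_0}$ exactly at time $t=T_n$. For brevity, write $V_u(t,y) := t^{1/3}u(t, t^{1/3}y)$, so that $V_S \equiv V$ for a self-similar solution $S(t,x) = t^{-1/3}V(t^{-1/3}x)$, and one has the Fourier identity $\widehat{V_u}(t,\xi) = \hat u(t, t^{-1/3}\xi)$.

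For each $n\ge 1$ set $T_n = 1/n$ and define
\[ u_n(T_n,x) := T_n^{-1/3}\,\hat{g_0}\bigl(T_n^{-1/3}x\bigr), \]
so that $V_{u_n}(T_n, y) = \hat{g_0}(y)$ by direct computation. The scaling invariance of the $\q E$ norm under the mKdV rescaling $u \mapsto \lambda^{1/3} u(\lambda\,\cdot, \lambda^{1/3}\,\cdot)$ yields $\|u_n(T_n)\|_{\q E(T_n)} \lesssim \delta$. A rescaled form of Theorem \ref{th2}, with base time $T_n$ in place of $1$, then produces $u_n \in \q E([T_n,+\infty))$ with $\|u_n\|_{\q E([T_n,+\infty))} \lesssim \delta$ uniformly in $n$.

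The uniform bound, together with a diagonal compactness argument in the spirit of Section 5, extracts a subsequence converging to some $u \in \q E((0,+\infty))$ solving \eqref{mkdv}. It remains to prove \eqref{eq:taxablowup}. Since $V_{u_n}(T_n) = \hat{g_0}$, the rescaled mKdV equation
\[ s\,\partial_s V = -\partial_y^3 V + \tfrac{1}{3}\partial_y(yV) - \e\,\partial_y(V^3), \]
satisfied by $V = V_{u_n}$, gives the Duhamel-type identity
\[ V_{u_n}(t) - \hat{g_0} = \int_{T_n}^{t} \frac{1}{s}\bigl(-\partial_y^3 V_{u_n}(s) + \tfrac{1}{3}\partial_y(y V_{u_n}(s)) - \e\,\partial_y(V_{u_n}(s)^3)\bigr)\, ds. \]
Estimating this in $H^{-1}(\m R)$ on the Fourier side, via the change of variable $\xi = s^{1/3}p$ and the uniform $\q E$-controls $\|\tilde u_n\|_{L^\infty}\lesssim \delta$ and $\|\partial_p \tilde u_n\|_{L^2(0,+\infty)}\lesssim \delta\,s^{1/6}$, yields a bound $\lesssim \delta\,t^{1/3}$ uniformly in $n$. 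Lower semicontinuity of the $H^{-1}$-norm under the extraction then gives \eqref{eq:taxablowup} in the limit.

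The main difficulty lies in this last estimate. The third-order derivative $\partial_y^3 V$ is not controlled by the $\q E$ norm in $H^{-1}$, so one cannot estimate the Duhamel integrand pointwise in $s$ via a crude $H^{-1}$ bound. One must rather work on the Fourier side, where the weight $(1+\xi^2)^{-1/2}$ of $H^{-1}$, combined with the change of variable $\xi = s^{1/3}p$, converts the derivative in $y$ into a weight on $p$, which is then absorbed by the $L^2$-control on $\partial_p\tilde u_n$; in this process the factor $1/s$ appearing in the Duhamel formula is compensated and the rate $t^{1/3}$ emerges exactly. The cubic term $\partial_y(V^3)$ is handled similarly, using that it is of order $\delta^3$. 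A further delicate point is to verify that these manipulations are legitimate at the level of the approximants $u_n$, given that smooth functions are \emph{not} dense in $\q E$, so one cannot appeal to approximation and must derive the bounds directly from the structure of the equation.
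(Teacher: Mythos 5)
Your construction of the approximating family (solutions issued from initial times $T_n\to 0$ with self-similarly rescaled data, uniform $\q E$ bounds by scaling, compactness) coincides with the paper's Steps 1--2, and your Duhamel identity in self-similar variables is just the integrated form of the identity the paper uses for the rescaled profile $g_n(t,p)=\tilde u_n(t,t^{-1/3}p)$. The genuine gap is in the quantitative step you describe only in words. Passing to the Fourier side, the $\xi^3$ part of $\widehat{\partial_y^3V}$ indeed cancels against part of $\tfrac13\widehat{\partial_y(yV)}$, and (up to unimodular phases and constants) the integrand of your Duhamel formula reduces to a combination of
\[
\frac{\xi}{s^{4/3}}\,(\partial_p\tilde u_n)(s,s^{-1/3}\xi)
\qquad\text{and}\qquad
\frac{\xi}{s^{1/3}}\,\widehat{u_n^3}(s,s^{-1/3}\xi).
\]
With the only inputs you invoke, namely $\|\tilde u_n(s)\|_{L^\infty}\lesssim\delta$, $\|\partial_p\tilde u_n(s)\|_{L^2((0,+\infty))}\lesssim\delta s^{1/6}$ and $\|u_n(s)\|_{L^6}^3\lesssim \delta^3 s^{-5/6}$ (Lemma \ref{lema:decayE}), each of these two terms has $H^{-1}$ norm of order $\delta s^{-1}$, not $\delta s^{-2/3}$: the change of variables $\xi=s^{1/3}p$ only yields a factor $s^{1/6}$, so nothing compensates the $1/s$, and the time integral diverges logarithmically as $T_n\to 0$. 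This cannot be repaired from those scale-invariant bounds alone: the free Airy evolution of the same rescaled data satisfies every bound you use, yet its rescaled profile at time $t$ is $g_0((T_n/t)^{1/3}\xi)$, whose distance to $g_0(\xi)$ is $O(\delta)$ and certainly not $O(\delta t^{1/3})$ uniformly in $n$. So the sentence ``the factor $1/s$ is compensated and the rate $t^{1/3}$ emerges exactly'' is precisely the point that fails.

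The ingredient you are missing is the vector field $\mathcal I$, $\widehat{\mathcal{I}u}=i\partial_p\hat u-\frac{3it}{p}\partial_t\hat u$. The combination $(\partial_t\tilde u_n)(t,q)-\frac{q}{3t}(\partial_p\tilde u_n)(t,q)$ with $q=t^{-1/3}p$ --- which is exactly $\partial_t g_n$, i.e.\ your Duhamel integrand --- can be rewritten \emph{exactly} in terms of $\widehat{\mathcal{I}u_n}(t,t^{-1/3}p)$, and the paper's Step 3 estimates it through the propagation bound for $\|\widehat{\mathcal{I}u_n}(t)\|_{L^2((0,+\infty))}$ coming from Lemma \ref{lem:H1aprox} and Remark \ref{rmk:boundI}. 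Unlike $\|\partial_p\tilde u_n(t)\|_{L^2}$, this quantity is (almost) conserved, and for your particular approximants it carries an extra smallness at the initial time $T_n$ because the data there are an exact self-similar rescaling of a fixed profile; it is this non-scale-invariant gain, propagated forward by the energy estimate, that makes the integral converge and yields \eqref{eq:taxablowup}, and it distinguishes the nonlinear approximants from the free flow counterexample above. A secondary issue: with your choice $u_n(T_n,x)=T_n^{-1/3}\hat g_0(T_n^{-1/3}x)$, the claim $\|u_n(T_n)\|_{\q E(T_n)}\lesssim\delta$ does not follow by scaling, since $\q E(t)$ is defined through the profile $\widehat{\mathcal G(-t)\,\cdot\,}$ and the phase produces the term $T_n p^2 g_0(T_n^{1/3}p)$ in $\partial_p\tilde u_n(T_n)$, which is generically not square integrable when $g_0$ does not decay; the paper avoids this by prescribing the data through its profile, $\tilde u_n(T_n,p):=g_0(T_n^{1/3}p)$.
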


The second result is concerned with the stability of self-similar blow up. Even the description of the effects of small and smooth perturbations of self-similar solutions \emph{for small time} is not trivial. For example, consider the toy problem of the linearized equation 
\[ \partial_t v + \partial_{xxx} v + \epsilon\partial_x (K^2 v) =0. \]
near the fundamental solution $K(t,x) = t^{-1/3} \Ai(t^{-1/3} x)$ of the linear Korteweg-de Vries equation 
(which is, in some sense, the self-similar solution to the linear problem). The most natural move is to use the estimates of Kenig, Ponce and Vega \cite{KPV00}, which allows to recover the loss of a derivative:
\[ \| v \|_{L^\infty_t L^2_x} \lesssim \| v(0) \|_{L^2_x} + \| K^2 v \|_{L^1_x L^2_t}. \]
Now one can essentially only use Hölder estimate:
\[ \| K^2 v \|_{L^1_x L^2_t} \le \| K \|_{L^4_x L^\infty_t}^2 \| v \|_{L^2_{x,t}}, \]
but, due to the slow decay for $x \ll -1$, $K (t) \notin L^4_x$ for any $t$, and the argument can not be closed. 

We can however prove a stability result of self-similar solutions up to blow-up time, for low frequency perturbations. Given $\alpha>0$ and a sequence $(a_k)_{k\in\m N_0} \subset \m R^+$ satisfying
\[ a_0, a_1=1,\quad \text{and for all } k \ge 0, \quad a_{k}\le \alpha a_{2k+1}, \]
let us define the remainder space
\begin{equation} \label{def:R_a}
\q R_\alpha=\left\{ w \in \q C^\infty(\m R): \sup_{k\ge 0} a_k\|\partial_x^k w\|_{L^2}^2 < \infty \right\}
\end{equation}
endowed with the norm
\begin{equation}
\| w \|_{\q R_\alpha} =  \left(  \sup_{k\ge 0} a_k\|\partial_x^k w \|_{L^2}^2 \right)^{1/2}.
\end{equation}

\begin{prop}[Stability of the self-similar blow-up under $\q R_\alpha$-perturbations] \label{prop:stab}
There exists $\delta>0$ sufficiently small such that, if $w_1\in \q R_\alpha$ and $S$ is a self-similar solution with
	\[ \|w_1\|_{\q R_\alpha}^2 + \alpha\|S(1)\|_{\q E(1)}^2 <\delta, \]
	then the solution $u$ of \eqref{mkdv} with initial data $u_1=S(1)+w_1$ is defined on $(0,1]$ and
	\[ \sup_{t\in (0,1)} \|u(t)-S(t)\|_{\q R_\alpha}^2 < 2\delta. \]
\end{prop}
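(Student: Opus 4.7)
The plan is to work in the self-similar rescaled variables, where the profile $S$ becomes stationary and a dissipative mechanism appears. With $w = u - S$, $s = -\log t$, $y = t^{-1/3}x$, and $W(s, y) := t^{1/3} w(t, t^{1/3}y)$, the perturbation $W$ satisfies
\[
\partial_s W = -\tfrac{1}{3}\partial_y(yW) + \partial_{yyy}W + \epsilon\partial_y\bigl(3V^2 W + 3VW^2 + W^3\bigr),
\]
with $V := S(1,\cdot)$ and $W(0, \cdot) = w_1$. Existence of $u$ on $(0,1]$ amounts to existence of $W$ on $s \in [0,+\infty)$, and the change of variables yields $\|\partial_x^k w(t)\|_{L^2}^2 = e^{(2k+1)s/3}\|\partial_y^k W(s)\|_{L^2}^2$, so the goal becomes $\sup_{k, s \ge 0} \mathcal G_k(s) < 2\delta$ where $\mathcal G_k(s) := a_k e^{(2k+1)s/3}\|\partial_y^k W(s)\|_{L^2}^2$.

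Deriving the energy identity for $\mathcal E_k := \|\partial_y^k W\|_{L^2}^2$, the dispersive term $\partial_{yyy}W$ produces no contribution, while integration by parts on $-\tfrac{1}{3}\partial_y(yW)$ gives the linear dissipative contribution $-\tfrac{2k+1}{3}\mathcal E_k$. Crucially, this rate exactly cancels the scaling growth built into $\mathcal G_k$, so that
\[
\tfrac{d}{ds}\mathcal G_k(s) = 2\epsilon\, a_k\, e^{(2k+1)s/3}\int \partial_y^k W\cdot \partial_y^{k+1}\bigl(3V^2 W + 3VW^2 + W^3\bigr)\, dy.
\]
The plan is then a continuity/bootstrap: assume $\sup_{k, s'\le s}\mathcal G_k(s') \le 2\delta$ and improve strictly. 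The cubic $W^3$ and mixed quadratic $VW^2$ pieces are handled by Moser-type estimates, exploiting the $\q R_\alpha$-smoothness of $W$ (equivalently the rapid decay of $\hat W$) and the bound $\|\hat V\|_{L^\infty}\lesssim \|V\|_{\q E(1)}$; they produce an $O(\delta^{3/2})$ contribution, harmless for $\delta$ small.

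The delicate term is $\partial_y^{k+1}(3V^2 W)$. Since $V$ only lies in $\q E(1)$---in particular $\hat V \in L^\infty$ with a jump at the origin and no decay at infinity, and $V \notin L^2$---no pointwise derivative bound on $V$ is available. All estimates must therefore be performed on the Fourier side, controlling the trilinear form
\[
\int p^{2k+1}\hat W(p)\,\overline{(\hat V\ast\hat V\ast\hat W)(p)}\,dp
\]
by distributing the $2k+1$ powers of frequency onto the two $\hat W$ factors (the two $\hat V$ factors contributing only $L^\infty$-bounds) and pairing via Cauchy--Schwarz. The expected bound is of the form $\|V\|_{\q E(1)}^2 \cdot \|\partial_y^{k_1}W\|_{L^2}\|\partial_y^{k_2}W\|_{L^2}$ with $k_1+k_2=2k+1$, so that the weight condition $a_k \le \alpha a_{2k+1}$ produces precisely the prefactor $\alpha$ appearing in the hypothesis $\alpha\|S(1)\|_{\q E(1)}^2 < \delta$. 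With this estimate in hand, a Grönwall argument closes the bootstrap and yields $\sup_{s, k}\mathcal G_k(s) < 2\delta$, which is the claim. The main obstacle is the rigorous execution of this Fourier-side trilinear estimate: given the weak nature of $\q E(1)$, the Leibniz/convolution bookkeeping must be carried out with care so that the sole input on $V$ is $\|\hat V\|_{L^\infty}$, and the specific odd index $2k+1$ in the weight condition is tailored exactly to this calculation.
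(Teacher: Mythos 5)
Your rescaled-variables bookkeeping is fine and is in fact equivalent to the paper's computation in the original variables (the damping rate $-\tfrac{2k+1}{3}$ exactly mirrors the weight $t^{-(2k+1)/3}$, and the index $2k+1$ together with $a_k\le\alpha a_{2k+1}$ enters for the same reason: all derivatives are thrown onto one copy of $w$). But the step you yourself flag as ``the main obstacle'' is a genuine gap, and the route you propose for it does not work. You cannot control the term coming from $S^2w$ on the Fourier side using only $\|\hat V\|_{L^\infty}$: the trilinear form $\int p^{2k+1}\hat W\,\overline{(\hat V\ast\hat V\ast\hat W)}\,dp$ is simply not bounded by $\|\hat V\|_{L^\infty}^2$ times Sobolev norms of $W$ (take $\hat V=\mathbbm{1}_{[-R,R]}$: $\|\hat V\|_{L^\infty}=1$ while $\|V\|_{L^\infty}\sim R$ and the form grows like $R^2$), and once you replace $|\hat V|$ by its sup the remaining integral over the free convolution variables diverges. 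The input the paper uses is not $\hat V\in L^\infty$ but the \emph{physical-space} decay estimate of Lemma \ref{lema:decayE}, $\|S(t)\|_{L^\infty}\lesssim t^{-1/3}\|S\|_{\q E(1)}$, which requires the full $\q E$ norm (including the $\dot H^1$ piece of the profile). With this, no Fourier-side Leibniz bookkeeping is needed at all: writing $z=u^2+uS+S^2$, one has $\bigl|\int \partial_x^{k+1}(zw)\,\partial_x^k w\,dx\bigr|=\bigl|\int zw\,\partial_x^{2k+1}w\,dx\bigr|\le\|z\|_{L^\infty}\|w\|_{L^2}\|\partial_x^{2k+1}w\|_{L^2}$ with $\|z\|_{L^\infty}\lesssim\|w\|_{\q R_\alpha}^2+t^{-2/3}\|S\|_{\q E(1)}^2$, the singularity $t^{-2/3}$ being integrable on $(0,1]$; no derivative ever lands on $S$, which is precisely why the argument closes.

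There is a second gap: you assert that existence of $u$ on $(0,1]$ ``amounts to'' existence of $W$ for all $s\ge 0$ and follows from the bootstrap, but the well-posedness and continuation theory is in $\q E$, not in $\q R_\alpha$, and the $\q R_\alpha$ bound on $w$ does \emph{not} control $\|u(t)\|_{\q E(t)}$ (in particular $\|\partial_p\tilde w\|_{L^2(0,+\infty)}$ requires a weighted bound of the type $xw\in L^2$, which $\q R_\alpha$ does not provide). The paper closes this by a separate estimate: since $\mathcal{I}S=0$, one has $\mathcal I w=\mathcal I u$ and a Gronwall bound on $\|\mathcal I w\|_{L^2}$, then $\partial_p\tilde w=\widehat{\mathcal I w}+3t\,\widehat{zw}$ gives control of $\|\tilde w\|_{L^\infty}$, and the backward blow-up alternative (Proposition \ref{prop:blowupaltern}, itself resting on forward uniqueness) yields that $u$ extends down to any $t>0$. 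Your proposal would need both of these ingredients to be a complete proof.
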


Obviously, we shrank considerably the critical space by taking smooth perturbations of self-similar solutions, but the above result still shows some kind of stability of self-similar blow up; observe in particular that the blow up time is \emph{not} affected by the perturbation. The study of $\q R_\alpha$ perturbations is done in Section \ref{sec:9}.

\section{Preliminary estimates} \label{sec:3}

Throughout this section, $I \subset (0,+\infty)$ is an interval.

\begin{lem}[Decay estimates]\label{lema:decayE}
Let $u \in \q  C(I,\cal S')$ such that $\| u \|_{\q E(I)} <+\infty$. Then there hold $u \in \q  C(I, L^\infty_{\loc}(\m R))$ and more precisely, for $t \in I$ and $x \in \m R$, one has
\begin{align}\label{eq:est1lem1}
|u(t,x)| & \lesssim \frac{1}{t^{1/3}\jap{|x|/t^{1/3}}^{1/4}} \| u(t) \|_{\q E(t)} \\
\label{eq:est2lem1}
|\partial_x u(t,x) &|\lesssim \frac{1}{t^{2/3}} \jap{|x|/t^{1/3}}^{1/4} \| u(t) \|_{\q E(t)}.
\end{align}
Consequently,
\begin{align} \label{eq:est4lem1}
\|u(t)\|_{L^6}^3 &\lesssim t^{-\frac{5}{6}} \| u(t) \|_{\q E(t)}^3 \\
\label{eq:est5lem1}
\|u(t) \partial_x u(t)\|_{L^\infty} &\lesssim t^{-1} \| u(t) \|_{\q E(t)}^2.
\end{align}
	
Moreover, for $x>t^{1/3}$,
\begin{align} \label{eq:est3lem1}
|u(t,x)| & \lesssim \frac{1}{t^{1/3}\jap{x/t^{1/3}}^{3/4}} \| u(t) \|_{\q E(t)}, \\
|\partial_x u(t,x)| & \lesssim \frac{1}{t^{2/3}\jap{x/t^{1/3}}^{1/4}} \| u(t) \|_{\q E(t)},
\end{align}
and for $x<-t^{1/3}$,
\begin{equation}\label{eq:est6lem1}
\left| u(t,x)- \frac{1}{t^{1/3}}\partere \Ai \left(\frac{x}{t^{1/3}}\right) \tilde{u} \left( t,\sqrt{\frac{|x|}{3t}} \right) \right|\lesssim \frac{1}{t^{1/3}\jap{|x|/t^{1/3}}^{3/10}} \| u(t) \|_{\q E(t)}.
\end{equation}
\end{lem}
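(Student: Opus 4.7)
The plan is to exploit the scaling invariance $\|u_\lambda(1)\|_{\q E(1)} = \|u(\lambda)\|_{\q E(\lambda)}$ with $u_\lambda(t,x) := \lambda^{1/3} u(\lambda t,\lambda^{1/3}x)$; taking $\lambda = t$ reduces every bound to the case $t=1$ with $x$ playing the role of $x/t^{1/3}$. After this reduction, I would write
\[ u(1,x) = \frac{1}{\pi}\partere \int_0^{+\infty} e^{i(xp+p^3)}\,\tilde u(1,p)\,dp, \]
using that $\tilde u(-p)=\overline{\tilde u(p)}$ for real-valued $u$. The analysis then splits according to the sign and size of $x$. For $x\gtrsim 1$ the phase $\phi(p)=xp+p^3$ has no critical point and $|\phi'(p)|\gtrsim x+p^2$, so one integration by parts yields a boundary term $\tilde u(1,0^+)/(ix)$ plus an integral controlled dyadically in $p$, combining $\|\tilde u\|_\infty$ on short scales and $\|\partial_p\tilde u\|_{L^2}$ on long scales; balancing the scales against $\jap{x}$ produces the $\jap{x}^{-3/4}$ decay of \eqref{eq:est3lem1}. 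For $|x|\lesssim 1$ one simply truncates the frequency integral at a scale of order one. For $x\ll -1$ there is a stationary point $p_0=\sqrt{-x/3}$: I would localize around $p_0$ in a window of size $\eta>0$ to be optimized, replace $\tilde u(1,p)$ by $\tilde u(1,p_0)$ to generate the leading Airy term $\Ai(x)\tilde u(1,p_0)$ of \eqref{eq:est6lem1}, and bound the rest via van der Corput outside together with the Cauchy--Schwarz estimate $|\tilde u(1,p)-\tilde u(1,p_0)|\le \|\partial_p\tilde u\|_{L^2}|p-p_0|^{1/2}$ inside.

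The main obstacle is this stationary-phase step: classical expansions require $C^2$ amplitudes, whereas here only $\partial_p\tilde u\in L^2$ is available. The weight $t^{-1/6}$ in the definition of $\q E(t)$ is designed precisely so that the Cauchy--Schwarz/van der Corput trade-off produces the right rates — the optimal choice $\eta\sim|x|^{-1/10}$ gives the $\jap{|x|}^{-3/10}$ remainder in \eqref{eq:est6lem1}, whereas \eqref{eq:est1lem1} is the coarser $\jap{|x|}^{-1/4}$ rate that emerges if one only retains the $|\tilde u(1,p_0)|$-contribution. Extra care is needed around $p=0^+$, since $\tilde u$ may jump at the origin — this is the very reason $L^2((0,+\infty))$ appears in the norm — and this is also what forces retaining the boundary term at $p=0^+$ in every integration by parts.

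The derivative estimate \eqref{eq:est2lem1} and its right-tail refinement follow by rerunning the argument on $\partial_x u(1,x)=(1/\pi)\partere\int_0^{+\infty} ip\,e^{i(xp+p^3)}\tilde u(1,p)\,dp$, the extra factor of $p$ shifting the Airy envelope by half a power of $\jap{x}$ (explaining the growth of \eqref{eq:est2lem1} in the left tail and the improved decay on the right). Finally, the $L^6$ bound \eqref{eq:est4lem1} follows by integration: using \eqref{eq:est3lem1} on $x>t^{1/3}$, the Airy asymptotic \eqref{eq:est6lem1} on $x<-t^{1/3}$, and \eqref{eq:est1lem1} on $|x|\le t^{1/3}$, one checks that $|u(t,x)|^6$ is dominated by $t^{-2}\jap{x/t^{1/3}}^{-3/2}$, which integrates to $\|u(t)\|_{L^6}^6\lesssim t^{-5/3}$ and hence $\|u(t)\|_{L^6}^3\lesssim t^{-5/6}$; the product estimate \eqref{eq:est5lem1} is then immediate from \eqref{eq:est1lem1}--\eqref{eq:est2lem1} pointwise.
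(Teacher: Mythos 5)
Your proposal follows essentially the same route as the paper's proof: after reducing to $t=1$ (the paper instead rescales inside the integral via $q=pt^{1/3}$), you write $u$ through its Fourier representation on $(0,+\infty)$, peel off the Airy leading term $\Re\Ai(z)\,\tilde u(1,p_0)$, and control the remainder by integration by parts away from the stationary point combined with the H\"older-$\tfrac12$ bound $|\tilde u(1,p)-\tilde u(1,p_0)|\le \|\partial_p\tilde u\|_{L^2}|p-p_0|^{1/2}$ near it, exactly as in the paper (which follows Germain--Pusateri--Rousset for \eqref{eq:est6lem1}); the derivative, $L^6$ and product bounds are handled identically. One quantitative slip worth fixing: with your window $\eta\sim|x|^{-1/10}$ the inner Cauchy--Schwarz contribution is $\int_{|p-p_0|<\eta}|p-p_0|^{1/2}\,dp\sim\eta^{3/2}\sim|x|^{-3/20}$, which does \emph{not} give the claimed $\jap{|x|}^{-3/10}$; the correct optimization balances $\eta^{3/2}$ against the dominant outer term $\eta^{-1}|x|^{-1/2}$ and yields $\eta\sim|x|^{-1/5}$ (the paper's $2^{\ell_0}$), whence $\eta^{3/2}=|x|^{-3/10}$ as stated.
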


\begin{proof} 
The statement and proof are very similar to Lemma 2.1 in \cite{HN01}; notice, however, that the norm $\|\cdot\|_{\textbf{X}}$ therein is stronger than ours, so that we in fact need to systematically improve their bounds. For the convenience of the reader, we provide a complete proof. 

We recall that $\tilde u(t)$ is not continuous at $0$, and may (and will) have a jump (because we only control $\| \partial_p u \|_{L^2(0,+\infty)}$: in the following computations $\tilde u(t,0)$ will mean the limit $\tilde u(t,0^+)$). Setting
\[	z = \frac{x}{\sqrt[3]{t}},\quad y=\sqrt{-\frac{x}{t}}\text{ for }x\le 0,\quad y=0\text{ for }x>0, \]
we have the identity
\begin{align}
u(t,x) & =\frac{1}{\pi}\partere \int_0^\infty e^{ipx+ip^3t} \tilde u (t,p)dp, \quad q = p \sqrt[3]t \nonumber \\
& = \frac{1}{\pi\sqrt[3]{t}}\partere \int_0^\infty e^{iqz + iq^3}\left( \tilde u(t,y)+\left( \tilde u \left(t,\frac{q}{\sqrt[3]{t}}\right)-\tilde u (t,y)\right)\right) dq \nonumber  \\
& = \frac{1}{\sqrt[3]{t}}\partere \Ai \left(\frac{x}{\sqrt[3]{t}}\right) \tilde u(t,y) + R(t,x). \label{eq:u_R}
\end{align}
In the case $x\ge 0$, we integrate by parts in the remainder $R$:
\begin{align*}
R(t,x) & = \frac{1}{\pi\sqrt[3]{t}} \partere \int_0^\infty \partial_q\left(qe^{iqz+iq^3}\right)\frac{1}{1+iq(3q^2+z)}\left(\tilde u \left(t,\frac{q}{\sqrt[3]{t}}\right)-\tilde u(t,y)\right) dq \\
& = \frac{1}{\pi\sqrt[3]{t}} \partere \int_0^\infty \frac{e^{iq z  + iq^3}}{1+iq(3q^2+ z )} \\
& \qquad \times \Bigg(\frac{iq(6q^2+ z )}{1+iq(3q^2+ z )}\left(\tilde u\left(t,\frac{q}{\sqrt[3]{t}}\right)-\tilde u(t,y)\right)- \frac{q}{\sqrt[3]{t}} \partial_p \tilde u \left(t,\frac{q}{\sqrt[3]{t}}\right)\Bigg)dq.
\end{align*}
Since
\[	|\tilde u(t,p)-\tilde u(t,0)|\le \left|\int_0^p \partial_p \tilde u(t,q)dq\right| \le \sqrt{p}\|\partial_p \tilde u(t)\|_{L^2((0,+\infty))}, \]
we can estimate the remainder in the following way:
\begin{align*}
|R(t,x)| & \lesssim \frac{1}{\sqrt[3]{t}} \int_0^\infty \frac{1}{1+q(3q^2+ z )} \left( \left| \tilde u \left( t,\frac{q}{\sqrt[3]{t}}\right) - \tilde u(t,0) \right| + \frac{q}{\sqrt[3]{t}} \left| \partial_p \tilde u \left( t,\frac{q}{\sqrt[3]{t}} \right) \right| \right) dq \\
& \lesssim  \frac{1}{\sqrt{t}} \| \partial_p \tilde u(t) \|_{L^2} \int_0^\infty\frac{\sqrt{q}dq}{1+q(3q^2+ z )} \\
& \qquad + \frac{1}{\sqrt[3]{t^2}}\left(\int_0^\infty \left| \partial_p \tilde u\left(t,\frac{q}{\sqrt[3]{t}}\right)\right|^2 dq\right)^{\frac{1}{2}} \left(\int_0^\infty \frac{q^2 dq}{(1+q(3q^2+ z ))^2}\right)^{\frac{1}{2}} \\
&\lesssim \frac{1}{\sqrt{t}} \left(1+\frac{|x|}{t^{1/3}}\right)^{-1/4} \|\partial_p \tilde u(t)\|_{L^2((0,+\infty))}.
\end{align*}
In the case $x < 0$, we denote $r=\sqrt{- z /3}$. Integrating by parts, we get
\begin{align*}
R(t,x) & = \frac{1}{\pi\sqrt[3]{t}} \partere \int_0^\infty \partial_q\left((q-r)e^{iq z +iq^3}\right)\frac{1}{1+3i(q-r)^2(q+r)} \left( \tilde u\left(t,\frac{q}{\sqrt[3]{t}}\right) - \tilde u(t,y)\right) dq \\
& = -\frac{1}{\pi\sqrt[3]{t}}\partere \int_0^\infty \frac{e^{iq z +iq^3}}{1+3i(q-r)^2(q+r)} \Bigg( \frac{3i(q-r)^2(3q+r)}{1+3i(q-r)^2(q+r)}\left(\tilde u \left(t,\frac{q}{\sqrt[3]{t}}\right)-\tilde u(t,y)\right) \\
& \qquad \qquad + \frac{q-r}{\sqrt[3]{t}}\partial_p \tilde u\left(t,\frac{q}{\sqrt[3]{t}}\right)\Bigg)dq - \frac{r}{\pi\sqrt[3]{t}}\partere\frac{\tilde u(t,0)-\tilde u(t,y)}{1+3ir^3}.
\end{align*}
Then we can estimate
\begin{align*}
|R(t,x)| & \lesssim \frac{1}{\sqrt[3]{t}}\int_0^\infty \frac{1}{1+(q-r)^2(q+r)}\left(\left| \tilde u \left(t,\frac{q}{\sqrt[3]{t}}\right)-\tilde u(t,y) \right| + \frac{|q-r|}{\sqrt[3]{t}}\left|\partial_p \tilde u\left(t,\frac{q}{\sqrt[3]{t}}\right)\right|\right)dq \\
& \qquad\qquad + \frac{1}{\sqrt{t}\jap{r}}\|\partial_p \tilde u(t)\|_{L^2((0,+\infty))} \\ 
&\lesssim \frac{1}{\sqrt{t}}\left( \int_0^\infty \frac{\sqrt{|q-r|}dq}{1+(q-r)^2(q+r)} + \left(\int_0^\infty \frac{(q-r)^2dq}{(1+(q-r)^2(q+r))^2} \right)^{1/2}\right)  \\
& \qquad \qquad \times \| \partial_p \tilde u(t)\|_{L^2((0,+\infty))} +  \frac{1}{\sqrt{t}\jap{r}}\|\partial_p \tilde u(t)\|_{L^2((0,+\infty))} \\
& \lesssim \frac{1}{\sqrt{t}\sqrt{\jap{r}}} \|\partial_p \tilde u(t)\|_{L^2((0,+\infty))}.
\end{align*}
It now follows from the decay of the Airy-Fock function $|\text{Ai}( z )|\lesssim \jap{ z }^{-\frac{1}{4}}$ that
\begin{align*}
|u(t,x)| & \lesssim t^{-1/3}  \jap{\frac{x}{t^{1/3}}}^{-1/4} \left(\|\tilde u(t)\|_{L^\infty} + t^{-\frac{1}{6}}\|\partial_p \tilde u(t)\|_{L^2((0,+\infty))} \right) \\
&  \lesssim  \frac{1}{\sqrt[3]{t} \jap{x/t^{1/3}}^{1/4}} \| u(t) \|_{\q E(t)}. 
\end{align*}

This concludes the proof of \eqref{eq:est1lem1}. For \eqref{eq:est2lem1}, we split once again between the cases $x\ge 0$ and $x < 0$. In the second case, we have as in \eqref{eq:u_R}
\begin{gather*}
\partial_x u(t,x) = \frac{1}{\sqrt[3]{t^2}} \Re \Ai' \left( \frac{x}{\sqrt[3]t} \right) \tilde u(t,y) + \tilde R(t,x), \quad \text{with} \\
\tilde R(t,x) :=  \frac{1}{\pi\sqrt[3]{t^2}}\partere \int_0^\infty iq e^{iq z  + iq^3} \left( \tilde u \left(t,\frac{q}{\sqrt[3]{t}}\right)-\tilde u (t,y)\right)  dq .
\end{gather*}
Analogous computations done for $R$ yield
\begin{align*}
|\tilde R(t,x)|&\lesssim \frac{1}{\sqrt[3]{t^2}}\int_0^\infty \frac{q}{1+(q-r)^2(q+r)}\left(\left| \tilde u \left(t,\frac{q}{\sqrt[3]{t}}\right)-\tilde u(t,y) \right| + \frac{|q-r|}{\sqrt[3]{t}}\left|\partial_p \tilde u\left(t,\frac{q}{\sqrt[3]{t}}\right)\right|\right) dq \\
& \lesssim \frac{\|\tilde u(t)\|_{L^\infty}}{\sqrt[3]{t^2}}\int_0^\infty \frac{qdq}{1+3(q-r)^2(q+r)} \\
& \qquad  + \frac{\|\partial_p \tilde u(t)\|_{L^2((0,+\infty))}}{\sqrt[6]{t^5}} \left(\int_0^\infty \frac{q^2(q-r)^2dq}{(1+3(q-r)^2(q-r))^2}\right)^{\frac{1}{2}} \\
&\lesssim \frac{1}{t^{2/3}}\left(1+\frac{|x|}{t^{1/3}}\right)^{1/4}\left(\|\tilde u(t)\|_{L^\infty} + t^{-\frac{1}{6}}\|\partial_p \tilde u(t)\|_{L^2((0,+\infty))} \right) \\
& \lesssim \frac{1}{t^{2/3}} \jap{\frac{x}{t^{1/3}}}^{1/4} \|u(t) \|_{\q E(t)},
\end{align*}
and the bound for $\partial_x u$ follows from the bound on the Airy-Fock function $|\Ai '( z )| \lesssim \jap{ z }^{\frac{1}{4}}$. 

For $x\ge 0$, we write
\begin{align*}
\partial_x u(t,x) & = \frac{1}{\pi\sqrt[3]{t^2}}\partere\int_0^\infty ie^{iq z  + iq^3}q\tilde u \left(t,\frac{q}{\sqrt[3]{t}}\right)dq \\
& = \frac{1}{\pi\sqrt[3]{t^2}} \partere\int_0^\infty ie^{iq z  + iq^3} q \left(\tilde u(t,0) + \int_0^q \partial_p \tilde u\left(t,\frac{r}{\sqrt[3]{t}}\right)\frac{dr}{\sqrt[3]{t}} \right) dq\\
& = \frac{1}{\pi t}\partere\int_0^\infty \left(\int_r^\infty ie^{iq z  + iq^3}q dq\right)\partial_p \tilde u\left(t,\frac{r}{\sqrt[3]{t}}\right)dr.
	\end{align*}
	Applying Cauchy-Schwarz, and as for $z,r >0$, we have
\[ \int_r^\infty e^{iq z  + iq^3}q dq \lesssim \frac{1}{z+r^2}, \]
we obtain
\begin{align*}
|\partial_x u(t,x)| & \lesssim \frac{1}{\sqrt[6]{t^5}}\|\partial_p \tilde u(t)\|_{L^2((0,+\infty))} \left\|\int_r^\infty e^{iq z  + iq^3}q dq \right\|_{L^2((0,\infty),dr)} \\
& \lesssim \frac{1}{\sqrt[3]{t^2}}\jap{ z }^{-\frac{1}{4}} \|\partial_p \tilde u(t)\|_{L^2((0,+\infty))}.
\end{align*}
Hence \eqref{eq:est2lem1} follows. The estimate for $\partial_x u$ in \eqref{eq:est3lem1} is also a consequence of the above estimate. 

Now we prove the first estimate in \eqref{eq:est3lem1}. To that end, we integrate by parts the expression for $u$:
	\begin{align*}
	u(t,x) & = \frac{1}{\pi}\int_0^\infty e^{ipx+ip^3t}\tilde u(t,p)dp = \frac{\tilde u(t,0)}{\pi x}-\frac{1}{\pi}\int_0^\infty e^{ipx+ip^3t}\partial_p\left(\frac{\tilde u(t,p)}{x+3p^2t}\right)dp \\ & = \frac{\tilde u(t,0)}{\pi x}-\frac{1}{\pi}\int_0^\infty e^{ipx+ip^3t}\frac{\partial_p \tilde u(t,p)}{x+3p^2t}dp + \int_0^\infty e^{ipx+ip^3t}\frac{6pt \tilde u(t,p)}{(x+3p^2t)^2}dp.
	\end{align*}
	The first and third terms are bounded directly, while the second term is bounded using Cauchy-Schwarz:
\begin{align*}
\MoveEqLeft \left|\int_0^\infty e^{ipx+ip^3t}\frac{\partial_p \tilde u(t,p)}{x+3p^2t}dp \right| \lesssim \|\partial_p \tilde u(t)\|_{L^2((0,+\infty))}\left(\int_0^\infty \frac{dp}{(x+3p^2t)^2}  \right)^{1/2}\\
&\lesssim t^{1/6}\frac{1}{t^{1/4}x^{3/4}} \|\partial_p \tilde u(t)\|_{L^2((0,+\infty))} \lesssim \frac{1}{t^{1/3}\jap{x/t^{1/3}}} \|\partial_p \tilde u(t)\|_{L^2((0,+\infty))}.
\end{align*}
Finally, estimate \eqref{eq:est6lem1} follows from \cite[Lemma 2.9]{GPR16}. For completeness, we present the proof:  define $\ell_0\in \mathbb{Z}$ so that 
\[ 2^{\ell_0}\sim t^{-1/3}(|x|/t^{1/3})^{-1/5}. \]
We split the estimate for 
\[ R(t,x)= \frac{1}{\pi}\text{Re }\int_0^{\infty} e^{it\Phi(p)}(\tilde u(t,p)-\tilde u(t,y))dp, \quad \Phi(p)=\frac{x}{t}p + p^3, \]
in three regions, using appropriate cut-off functions $\chi_A+\chi_B+\chi_C=1$:

\medskip

\emph{Region A: $|p-y|\ge y/2$}. Over this region, $\partial_p\Phi(p)\gtrsim \max\{ y, p\}$. Then an integration by parts yields
\begin{align*}
&\left| \int_0^\infty e^{it\Phi(p)}(\tilde u(t,p)-\tilde u(t,y))\chi_A(p) dp\right| 
\lesssim \frac{1}{t^{1/3}(|x|/t^{1/3})^{3/4}} \| u(t) \|_{\q E(t)}.
\end{align*}

\medskip

\emph{Region B: $|p-y|\ge 2^{\ell_0}$}. If $|p-y|\sim 2^l$, with $l\ge \ell_0$, then $|\partial_p\Phi(p)|\gtrsim 2^ly$ and the same integration by parts gives
\begin{align*}
\left(\text{contribution of }|p-y|\sim 2^l\right) \lesssim \left(\frac{1}{t^{5/6}2^{l/2}y} + \frac{1}{t2^{l}y}\right) \| u(t) \|_{\q E(t)}.
\end{align*}
Summing in $l\ge \ell_0$,
\begin{align*}
\left|\int_0^\infty e^{it\Phi(p)}(\tilde u(t,p)-\tilde u(t,y))\chi_B(p) dp\right| 
& \lesssim \left(\frac{1}{t^{5/6}2^{\ell_0/2}y} + \frac{1}{t2^{\ell_0}y}\right) \| u(t) \|_{\q E(t)} \\
&  \lesssim\frac{1}{t^{1/3}(|x|/t^{1/3})^{3/10}} \| u(t) \|_{\q E(t)}.
\end{align*}

\medskip

\emph{Region C: $|p-y|\le 2^{\ell_0}$}. We decompose the integral as
\begin{align*}
\MoveEqLeft \int_0^{\infty} e^{it\Phi(p)}(\tilde u(t,p)-\tilde u(t,y))\chi_C(p)dp \\
& =  \int_0^\infty e^{it\Phi(p)-it(\Phi(y) + 3y(p-y)^2)}(\tilde u(t,p)-\tilde u(t,y))\chi_C(p)dp \\
& \quad + e^{it\Phi(y)}\int_0^\infty e^{3ity(p-y)^2}(\tilde u(t,p)-\tilde u(t,y))\chi_C(p)dp \\
& = I_1+ I_2.
\end{align*}
Since $|\tilde u(t,p)-\tilde u(t,y)|\lesssim t^{1/6}|p-y|^{1/2} \| u(t) \|_{\q E(t)}$, one easily bounds these integrals:
\begin{align*}
| I_1 | & \lesssim t2^{4\ell_0}\|u\| \lesssim t^{-1/3}(|x|/t^{1/3})^{4/5} \| u(t) \|_{\q E(t)} \\
| I_2 | &\lesssim t^{1/6}2^{3\ell_0/2}  \| u(t) \|_{\q E(t)} \lesssim t^{-1/3}(|x|/t^{1/3})^{3/10} \| u \|_{\q E(t)}. \qedhere
\end{align*}
\end{proof}

Let $u \in \q  C(I,\cal S')$ be a solution to \eqref{mkdv} in the distributional sense. Taking the Fourier transform of
\[ (\partial_t + \partial_{xxx}) u = -\epsilon\partial_x (u^3) \]
 using $\tilde{u}(t,p)=e^{itp^3}\hat{u}(t,p)$, one obtains
\begin{equation}
\partial_t \tilde{u}(t,p) + \frac{i\epsilon p}{4\pi^2}\iint_{p_1+p_2+p_3=p} e^{it(p^3-p_1^3-p_2^3-p_3^3)}\tilde{u}(t,p_1)\tilde{u}(t,p_2)\tilde{u}(t,p_3)dp_1dp_2 = 0.
\end{equation}
This leads us to define (with the change of variables $p_i = pq_i$)
\begin{equation}
\cal N[u](t,p)= ip^3\iint_{q_1+q_2+q_3=1} e^{-itp^3(1-q_1^3-q_2^3-q_3^3)}\tilde{u}(t,pq_1)\tilde{u}(t,pq_2)\tilde{u}(t,pq_3)dq_1dq_2,
\end{equation}
so that
\begin{align}\label{eq:tildeu}
\partial_t \tilde{u}(t,p)&=-\frac{\epsilon}{4\pi^2}\cal N[u](t,p).
\end{align}

The following result is a stationary phase lemma for $\cal N[u]$. Similar statements may be found in \cite[Lemma 2.4]{HN01} and \cite{GPR16}.

\begin{lem}[Asymptotics of the nonlinearity on the Fourier side]\label{lem:desenvolveoscilatorio}
Let $u \in \q  C(I,\cal S')$ such that $\| u \|_{\q E(I)} <+\infty$. One has the following asymptotic development for $\cal N[u]$: for all $t \in I$ and $p >0$,
\begin{align} \label{eq:N_expansion}
\cal N[u](t,p)&=\frac{\pi p^3}{\jap{p^3t}}\left(i|\tilde{u}(t,p)|^2\tilde{u}(t,p) - \frac{1}{\sqrt{3}}e^{-\frac{8itp^3}{9}}\tilde{u}^3\left(t,\frac{p}{3}\right) \right)+ R[u](t,p)
\end{align}
where the remainder $R$ satisfies the bound
\begin{align} \label{eq:N_remainder}
|R[u](t,p)| \lesssim \frac{p^3\|u(t) \|^3_{\q E(t)}}{(p^3t)^{5/6}\jap{p^3t}^{1/4}}.
\end{align}
\end{lem}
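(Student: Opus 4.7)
The plan is to perform a 2D non-degenerate stationary-phase analysis on the oscillatory integral defining $\cal N[u]$. Parameterizing the constraint surface $q_1+q_2+q_3 = 1$ by $(q_1,q_2)\in \m R^2$ with $q_3 = 1-q_1-q_2$, the phase admits the algebraic factorization
\[ \Phi(q_1,q_2) := 1-q_1^3-q_2^3-q_3^3 = 3(1-q_1)(1-q_2)(1-q_3). \]
A direct calculation reveals exactly four isolated non-degenerate critical points of $\Phi$: the \emph{interior} point $q^* = (\tfrac13,\tfrac13,\tfrac13)$, at which $\Phi = 8/9$ and the $2\times 2$ Hessian is negative definite with determinant $12$ (signature $-2$), and three \emph{corner} points $(1,1,-1)$, $(1,-1,1)$, $(-1,1,1)$, at which $\Phi = 0$ and the Hessian is traceless with determinant $-36$ (signature $0$). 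The corners are precisely the pairwise intersections of the three zero-lines $\ell_i = \{q_i = 1,\; q_j+q_k=0\}$ of $\Phi$; away from these corners, $\Phi$ vanishes identically along each $\ell_i$ while $\nabla\Phi \neq 0$ transversally.

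Standard 2D stationary phase at $q^*$, with signature factor $e^{i\pi/2}=i$ for $\mathrm{Hess}(-\Phi)$, yields after multiplication by the prefactor $ip^3$:
\[ ip^3 \cdot \frac{2\pi i}{tp^3\,\sqrt{12}}\, e^{-8itp^3/9}\, \tilde u(t,p/3)^3 \;=\; -\frac{\pi p^3}{\sqrt 3\, tp^3}\, e^{-8itp^3/9}\tilde u(t,p/3)^3, \]
which matches, for $tp^3\gg 1$, the second summand of \eqref{eq:N_expansion}. At each corner, the reality of $u$ gives $\tilde u(-p)=\overline{\tilde u(p)}$, so the integrand evaluates to $\tilde u(p)^2\overline{\tilde u(p)} = |\tilde u(p)|^2\tilde u(p)$, and the signature-$0$ saddle stationary phase contributes $\frac{2\pi}{tp^3\cdot 6}|\tilde u(p)|^2\tilde u(p) = \frac{\pi}{3tp^3}|\tilde u(p)|^2\tilde u(p)$. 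Summing the three corners and multiplying by $ip^3$ produces $\frac{i\pi p^3}{tp^3}|\tilde u(p)|^2\tilde u(p)$, the first summand of \eqref{eq:N_expansion}.

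The remainder $R[u]$ collects the rest of the integral, via a smooth partition of unity. In the ``fully non-stationary'' region, where $|\nabla\Phi|$ is bounded below (including in a neighborhood of infinity in $(q_1,q_2)$), one or two integrations by parts along $\nabla\Phi/(-itp^3|\nabla\Phi|^2)$ give sufficient decay. In the neighborhoods of the $\ell_i$ with the corners excised, $\Phi$ is linear in the transverse coordinate $s=1-q_i$ with coefficient bounded away from zero, and a single integration by parts in $s$ suffices. Whenever derivatives fall on $\tilde u(pq_i)$, one closes via Cauchy--Schwarz using $\|\partial_p\tilde u\|_{L^2(\m R^+)}\le t^{1/6}\|u\|_{\q E(t)}$. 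The limited regularity of $\tilde u$ (only Hölder-$\tfrac12$, from $\partial_p\tilde u\in L^2$) precludes a Taylor expansion at the critical points; instead, one uses $|\tilde u(pq)-\tilde u(pq_0)|\lesssim p^{1/2}|q-q_0|^{1/2}t^{1/6}\|u\|_{\q E(t)}$, which on the Airy scale $|q-q_0|\sim (tp^3)^{-1/2}$ around each critical point loses a factor $(tp^3)^{-1/4}$ relative to the constant approximation --- precisely the $\jap{tp^3}^{-1/4}$ improvement in \eqref{eq:N_remainder}. For $tp^3\lesssim 1$ the bound is trivial: $|R|\le|\cal N|+|\text{leading}|\lesssim p^3\|u\|_{\q E(t)}^3 \lesssim p^3\|u\|_{\q E(t)}^3/(tp^3)^{5/6}$.

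The main obstacle is carrying out this stationary-phase analysis rigorously at such low regularity: smooth Taylor expansion is unavailable and must be replaced throughout by Hölder or fractional-Sobolev estimates on $\tilde u$, together with the reality condition of $u$ that collapses each corner integrand to $|\tilde u(p)|^2\tilde u(p)$. Organizing the partition of unity so that the four critical-point neighborhoods, the corner-excised zero-line strips, and the non-stationary bulk fit together with uniform-in-$(t,p)$ estimates --- including across the transition $tp^3\sim 1$, where the $\jap{tp^3}^{-1}$ factor in the leading term is essential --- is where the technical work lies.
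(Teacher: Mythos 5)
Your leading-order computation is correct and coincides with the paper's: the factorization $1-q_1^3-q_2^3-q_3^3=3(1-q_1)(1-q_2)(1-q_3)$, the four critical points, the Hessian determinants $12$ and $-36$, the signature factors, and the collapse of the corner amplitudes to $|\tilde u(p)|^2\tilde u(p)$ by reality of $u$ all reproduce the constants in \eqref{eq:N_expansion}; and the overall architecture (freeze the amplitude at the critical points, push the difference into the remainder, use the H\"older-$\frac12$ bound $|\tilde u(t,a)-\tilde u(t,b)|\le t^{1/6}\|u\|_{\q E(t)}|a-b|^{1/2}$ and Cauchy--Schwarz against $\|\partial_p\tilde u\|_{L^2}$) is exactly the paper's. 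The difficulty, however, is entirely in the remainder estimate, and there your plan has two genuine gaps.

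First, the regime $tp^3\lesssim 1$ is not trivial: the integration runs over the whole plane $\{q_1+q_2+q_3=1\}$ and $\tilde u$ is merely bounded with no decay (indeed $u^3\notin L^1$ in general, by the mere $\jap{x/t^{1/3}}^{-1/4}$ decay on the left), so the bound $|\cal N[u]|\lesssim p^3\|u\|_{\q E(t)}^3$ you invoke is not available; even for $\tau=tp^3<1$ one must integrate by parts in the outer region, and the paper only obtains $\tau^{-2/3}$ and $\tau^{-5/6}$ there, not $O(1)$. Second, and more seriously, your quantitative plan away from the critical points does not reach the required $\tau^{-13/12}$ for $\tau\ge 1$: a single integration by parts (near the zero-lines or in the bulk) yields at best a factor $\tau^{-1}$, and only $\tau^{-5/6}$ once a derivative falls on $\tilde u$ (Cauchy--Schwarz costs $t^{1/6}p^{1/2}=\tau^{1/6}$), both larger than $\tau^{-13/12}$; while a second integration by parts along the \emph{same} direction $\nabla\Phi/|\nabla\Phi|^2$ is impossible at this regularity, since it produces second derivatives of a single factor $\tilde u(pq_i)$. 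The missing ingredient is precisely the paper's key device: perform the two integrations by parts in \emph{different} directions, so that only mixed derivatives of the product (first derivatives of two distinct factors) appear, and use the non-singular identity $Ke^{i\tau Q}=\frac{1}{1+i\tau K\,\partial Q}\,\partial\bigl(Ke^{i\tau Q}\bigr)$ with $K$ vanishing at the stationary point, which removes the singular denominators and handles the transition $\tau\sim 1$ without $\tau$-dependent excisions. A small bookkeeping remark in the same vein: on the scale $|q-q_0|\sim\tau^{-1/2}$ the H\"older gain relative to the main term is $\tau^{-1/12}$, not $\tau^{-1/4}$, because of the $\tau^{1/6}$ conversion factor; the count that matches \eqref{eq:N_remainder} is $p^3\cdot\tau^{-1}\cdot\tau^{1/6}\cdot\tau^{-1/4}=p^3\tau^{-13/12}$.
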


\begin{proof}
This essentially relies on a stationary phase type argument. We must however emphasize that the computations and the estimations of the errors have to be performed very carefully, because our setting allows few integration by parts and functions have limited spatial decay. We postpone the proof to Appendix A.
\end{proof}

\begin{lem}\label{lem:desenvassimpt}
Let $I \subset (0,+\infty)$ be an interval and $t_1 \in I$. Let $u \in \q  C(I,\cal S')$ be a solution to \eqref{mkdv} in the distributional sense such that $\| u \|_{\q E(I)} <+\infty$.

Then, for some universal constant $C$ (independent of $I$), and for all $t \in I$
\begin{align*}
\|\tilde{u}(t)\|_{L^\infty} & \le\|\tilde{u}(t_1)\|_{L^\infty} + C(\|u\|_{\q E(I)}^3 + \|u\|_{\q E(I)}^5), \\
\| \partial_t \tilde u(t) \|_{L^\infty} & \lesssim \frac{1}{t} \| u(t) \|_{\q E(t)}^3.
\end{align*}
Furthermore, if we denote
\begin{align} \label{def:E_u}
E_u(t,p) :=\exp\left(-i\epsilon\int_{t_1}^t\frac{p^3}{ \jap{p^3s}}|\tilde{u}(s,p)|^2ds\right),
\end{align}
one has, for all $t, \tau \in I$
\begin{equation}\label{eq:decayuniformt}
|(\tilde{u}E_u)(t,p)-(\tilde{u}E_u)(\tau,p)| \lesssim (\|u\|_{\q E(I)}^3 + \|u\|_{\q E(I)}^5)\min\left( \frac{|t-\tau|}{\jap{p^3}},\frac{1}{\jap{\tau p^3}^{\frac{1}{12}}} \right).
\end{equation}
\end{lem}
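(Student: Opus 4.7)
The plan is to combine the expansion of $\cal N[u]$ from Lemma \ref{lem:desenvolveoscilatorio} with the equation $\partial_t \tilde u = -\frac{\epsilon}{4\pi^2}\cal N[u]$, exploiting that (up to a normalizing constant) $E_u$ is designed so that $\tilde u\,\partial_s\log E_u$ cancels the leading non-integrable contribution $\frac{i\pi p^3}{\jap{p^3s}}|\tilde u|^2\tilde u$ of $\cal N[u]$. After this cancellation, one obtains
\[
\partial_s(\tilde u E_u) = E_u\left[\frac{\epsilon\, p^3}{4\pi\sqrt 3\,\jap{p^3 s}}\,e^{-8isp^3/9}\,\tilde u^3(s, p/3) - \frac{\epsilon}{4\pi^2}\,R[u](s,p)\right],
\]
so that each remaining piece is integrable in $s$. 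The second estimate $\|\partial_t\tilde u(t)\|_{L^\infty} \lesssim t^{-1}\|u(t)\|^3_{\q E(t)}$ is then immediate from Lemma \ref{lem:desenvolveoscilatorio}: $\frac{p^3}{\jap{p^3 t}}\le \frac{1}{t}$ directly, and $\frac{p^3}{(p^3 t)^{5/6}\jap{p^3 t}^{1/4}}\lesssim \frac{1}{t}$ by checking the regimes $p^3 t \le 1$ and $p^3 t \ge 1$ separately.

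For the third estimate, I would integrate $\partial_s(\tilde u E_u)$ from $\tau$ to $t$ and establish each of the two upper bounds in the $\min$ separately. The remainder contribution is treated directly: the change of variables $\sigma = p^3 s$ reduces \eqref{eq:N_remainder} to $\|u\|_{\q E(I)}^3\int_{p^3\tau}^{p^3 t}\sigma^{-5/6}\jap{\sigma}^{-1/4}\,d\sigma$, whose tail decays like $(p^3\tau)^{-1/12}$. The oscillatory term is integrated by parts in $s$ using $\partial_s e^{-8isp^3/9} = -\tfrac{8ip^3}{9}e^{-8isp^3/9}$, which cancels the coefficient $p^3$ and produces a boundary term $\lesssim \|u\|_{\q E(I)}^3/\jap{p^3\tau}$ plus an interior integral where $\partial_s$ falls either on $\jap{p^3 s}^{-1}$ (giving $p^3/\jap{p^3 s}^2$, directly integrable), on $\tilde u^3(s,p/3)$ (controlled via the second estimate $|\partial_s\tilde u|\lesssim \|u\|^3/s$), or on $E_u$ (contributing $|\partial_s E_u|\lesssim p^3\|u\|^2/\jap{p^3 s}$, and this is the source of the $\|u\|^5$ term); each piece integrates with a $\jap{p^3\tau}^{-1/12}$ decay. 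The alternative $|t-\tau|/\jap{p^3}$ bound follows by estimating the integrand pointwise: when $p^3\le 1$ one has $p^3/\jap{p^3 s}\lesssim 1/\jap{p^3}$, and when $p^3\gg 1$ the already established $\jap{\tau p^3}^{-1/12}$ factor is the dominant one.

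The first estimate then falls out of the third by taking $\tau = t_1$: since $|E_u|\equiv 1$ and $E_u(t_1)=1$, one has $|\tilde u(t,p)|\le |\tilde u(t_1,p)| + |\int_{t_1}^t\partial_s(\tilde u E_u)\,ds|$, and the right-hand supremum is bounded uniformly in $p$ by $C(\|u\|^3+\|u\|^5)$ via the third estimate (using $\jap{t_1 p^3}^{-1/12}\le 1$). The main technical obstacle will be the careful bookkeeping of the boundary and interior terms produced by the oscillatory integration by parts — in particular the derivative falling on $E_u$ carries a factor of $|\tilde u|^2$ and is responsible for the $\|u\|^5$ correction, and one must verify that this extra contribution still integrates with the claimed $\jap{p^3\tau}^{-1/12}$ decay rather than spoiling it.
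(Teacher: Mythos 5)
Your overall strategy is the same as the paper's (cancel the cubic resonant term with $E_u$, integrate the $\tilde u^3(s,p/3)$ oscillatory term by parts in $s$, treat the remainder of Lemma \ref{lem:desenvolveoscilatorio} by the substitution $\sigma=p^3s$), but there is a genuine gap in the integration by parts step. You use the naive identity $e^{-8isp^3/9}=-\tfrac{9}{8ip^3}\partial_s e^{-8isp^3/9}$ and then control the interior term where $\partial_s$ falls on $\tilde u^3(s,p/3)$ by the crude bound $|\partial_s\tilde u|\lesssim \|u\|^3/s$. This produces a contribution of size $\|u\|^5\int_\tau^t \frac{ds}{s\,\jap{p^3 s}}$, and this integral does \emph{not} "integrate with a $\jap{p^3\tau}^{-1/12}$ decay": in the regime $p^3 s\lesssim 1$ the integrand is $\sim 1/s$, so for instance with $\tau=t_1=1$ and $t=p^{-3}$, $p\to 0$, the integral is $\sim \log(1/p^3)\to\infty$, while the claimed bound is $O(1)$. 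In particular the first estimate of the Lemma ($\|\tilde u(t)\|_{L^\infty}\le\|\tilde u(t_1)\|_{L^\infty}+C(\|u\|^3+\|u\|^5)$, uniformly in $t$ and $p$) cannot be closed this way; the argument loses a logarithm at low frequencies. (Your treatment of the other interior terms — derivative on $\jap{p^3s}^{-1}$ or on $E_u$ — is fine, since those are $\lesssim \frac{p^3}{\jap{p^3 s}^2}(\|u\|^3+\|u\|^5)$ and integrable.)

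This is exactly the point where the paper's proof is more careful: instead of dividing by $p^3$, it uses the weighted identity $e^{-8isp^3/9}=\frac{1}{1-\frac{8isp^3}{9}}\,\partial_s\bigl(s\,e^{-8isp^3/9}\bigr)$. The extra factor of $s$ coming from $\partial_s(s\,e^{-8isp^3/9})$ precisely compensates the $1/s$ loss in $|\partial_s\tilde u(s,p/3)|\lesssim \|u\|^3/s$, so that \emph{all} interior terms are $\lesssim \frac{p^3}{\jap{p^3 s}^2}(\|u\|^3+\|u\|^5)$, whose integral is $\lesssim\min\bigl(\jap{p^3\tau}^{-1},|t-\tau|\jap{p^3}^{-1}\bigr)$, and the boundary terms are $\lesssim\|u\|^3\jap{p^3\tau}^{-1}$. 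Alternatively, you could keep your naive integration by parts but replace the crude bound by the full pointwise expansion of $\partial_s\tilde u(s,p/3)$ from Lemma \ref{lem:desenvolveoscilatorio}, namely $|\partial_s\tilde u(s,q)|\lesssim \frac{q^3}{\jap{q^3 s}}\|u\|^3+\frac{q^3}{(q^3s)^{5/6}\jap{q^3s}^{1/4}}\|u\|^3$ (the $1/s$ decay is only valid when $q^3 s\gtrsim1$); with this the problematic term becomes $\lesssim \frac{p^3}{\jap{p^3s}^2}\|u\|^5+\frac{p^3}{(p^3 s)^{5/6}\jap{p^3 s}^{5/4}}\|u\|^5$, which is integrable with the claimed decay. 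A secondary, smaller issue: your justification of the $|t-\tau|/\jap{p^3}$ branch for $p^3\gg1$ ("the $\jap{\tau p^3}^{-1/12}$ factor is dominant") is not correct as stated — being bounded by the minimum requires checking both bounds, and $\jap{\tau p^3}^{-1/12}$ need not be $\lesssim |t-\tau|/\jap{p^3}$; the correct route is the pointwise bound $\frac{p^3}{\jap{p^3 s}^2}\le\frac{1}{\jap{p^3}}$ (for $s\gtrsim1$), as implicit in the paper.
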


\begin{proof}
Since $\tilde{u}(t,-p)=\overline{\tilde{u}(t,p)}$, it suffices to consider $p>0$. Using Lemma \ref{lem:desenvolveoscilatorio},

\begin{align}
\partial_t \tilde{u} (s,p) & = -\frac{\epsilon}{4\pi}\frac{p^3} {\jap{p^3s}}\left(\frac{1}{\sqrt{3}}e^{-\frac{8isp^3}{9}}\tilde{u}^3(s,p/3) - i|\tilde{u}(s,p)|^2\tilde{u}(s,p)\right)\label{eq:quaseperfil} \\
&+ O\left(\frac{p^3\|u\|_{\q E(t)}^3}{(p^3t)^{5/6}\jap{p^3t}^{1/4}}\right).
\end{align}
Denote $v(t,p)=\tilde{u}(t,p)E_u(t,p)$. Then the integration in time of \eqref{eq:quaseperfil} on $[\tau,t] \subset I$ yields
\begin{align}\label{eq:perfil}
v(t,p) & = v(\tau,p)-\epsilon\int_{\tau}^t\frac{p^3}{4\pi\sqrt{3} \jap{p^3s}}e^{-\frac{8isp^3}{9}}E_u(s,p)\tilde{u}^3\left(s,\frac{p}{3}\right)ds \\
& \qquad+ O\left(p^3\|u\|_{\q E(I)}^3\int_{\tau}^t \frac{ds}{(p^3s)^{5/6}\jap{p^3s}^{1/4}}\right).
\end{align}

We claim that 
\begin{equation}\label{eq:claim}
\left|\int_{\tau}^t\frac{p^3}{ \jap{p^3s}}e^{-\frac{8isp^3}{9}}E_u(s,p)\tilde{u}^3\left(s,\frac{p}{3}\right)ds \right| \lesssim (\|u\|_{\q E(I)}^3+\|u\|_{\q E(I)}^5) \min \left( \frac{1}{\jap{p^3\tau}}, \frac{|t-\tau|}{\jap{p^3}} \right)
\end{equation}
Indeed, integrating by parts,
\begin{align*}
\MoveEqLeft \int_\tau^t\frac{p^3}{ \jap{p^3s}}e^{-\frac{8isp^3}{9}}E_u(s,p)\tilde{u}^3\left(s,\frac{p}{3}\right)ds \\
& = \int_\tau^t \partial_s \left(s e^{-\frac{8isp^3}{9}}\right)\frac{1}{1-\frac{8is p^3}{9}}\frac{p^3}{ \jap{p^3s}}E_u(s,p)\tilde{u}^3\left(s,\frac{p}{3}\right) ds\\ 
& =-\int_{\tau}^{t} E_u(s,p)\frac{e^{-\frac{8isp^3}{9}}}{1-\frac{8is p^3}{9}}\Bigg(\tilde{u}^3\left(s, p/3\right) \frac{ O(p^6 s) }{\left(1-\frac{8isp^3}{9}\right) \jap{p^3s}} \\
& \qquad + \frac{3sp^3}{ \jap{p^3s}}\tilde{u}^2(s,p/3)\tilde{u}_s(s,p/3) + \frac{ip^3sp^3}{\jap{p^3s}^2}\tilde{u}^3(s,p/3)|\tilde{u}(s,p)|^2  \Bigg)ds \\
& \qquad + \left[\frac{E_u(s,p)e^{-\frac{8isp^3}{9}}\tilde{u}^3(s,p/3)}{1-\frac{8isp^3}{9}}\right]_{s=\tau}^{s=t}.
\end{align*}
From \eqref{eq:quaseperfil}, we have 
\[ |\partial_t \tilde{u}(s,p)|\lesssim s^{-1} \| \tilde u (s) \|_{L^\infty}^3 \lesssim s^{-1} \|u\|_{\q E(s)}^3. \]
 Taking absolute values in the above expression,
\begin{align*}
\left| \int_\tau^t\frac{p^3}{ \jap{p^3s}}e^{-\frac{8isp^3}{9}}E_u(s,p)\tilde{u}^3\left(s,\frac{p}{3}\right)ds\right| & \lesssim (\|u\|_{\q E([\tau,t])}^3+\|u\|_{\q E([\tau,t])}^5)\int_{\tau}^{t}\frac{p^3ds}{\jap{p^3s}^{2}} \\
& \lesssim (\|u\|_{\q E(I)}^3+\|u\|_{\q E(I)}^5) \min\left( \frac{1}{\jap{p^3\tau}}, \frac{|t-\tau|}{\jap{p^3}}\right)
\end{align*}
as claimed. We plug this estimate with $\tau = t_1$ in \eqref{eq:perfil},
\begin{align*}
\|\tilde{u}(t)\|_{L^\infty} = \| v(t)\|_{\infty} \le \| v(t_1)\|_{L^\infty} + C(\|u\|_{\q E(I)} ^3 + \|u\|_{\q E(I)}^5).
\end{align*}
Estimate \eqref{eq:decayuniformt} follows from \eqref{eq:claim}:
\begin{align*}
|v(t,p) - v(\tau,p)| & \lesssim \left| \int_\tau^t\frac{p^3}{4\pi\sqrt{3} \jap{p^3s}}e^{-\frac{8isp^3}{9}}E_u(s,p)\tilde{u}^3\left(s,\frac{p}{3}\right)ds\right| \\
& \qquad + O\left(p^3\|u\|_{\q E(I)}^3\int_\tau^t \frac{ds}{(p^3s)^{5/6}\jap{p^3s}^{1/4}}\right)\\
& \lesssim (\|u\|_{\q E(I)}^3 + \|u\|_{\q E(I)}^5)\min\left(\frac{|t-\tau|}{\jap{p^3}},\frac{1}{\jap{\tau p^3}^{\frac{1}{12}}} \right). \qedhere
\end{align*}

\end{proof}

\section{Construction of an approximating sequence} \label{sec:4}

Let $(\chi_n)_{n \in\mathbb{N}}\subset \mathcal{S}(\m R)$ be a sequence of even decreasing functions such that

\begin{itemize}
\item for all $n\in \m N$, $0<\chi_n \le 1$, $\chi_n^{1/2}\in \mathcal{S}(\m R)$,
\item for all $p \in \m R$,  $\chi_n(p) \to 1$ as $n \to +\infty$.
\item $\ds \sup_{p\in \m R}  |p(\chi_n^{1/2})'(p)| \to 0$ as $n \to +\infty$.
\end{itemize}

The existence of such a sequence is not completely obvious, let us sketch how to construct one.

\begin{claim}
There exists a sequence $(\chi_n)_{n \in\mathbb{N}}$ satisfying the above conditions.
\end{claim}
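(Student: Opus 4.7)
The three conditions pull in opposite directions: the Schwartz condition on $\chi_n^{1/2}$ demands super-polynomial decay at infinity, while $\chi_n \to 1$ pointwise demands that $\chi_n$ become almost constant on every compact set. The delicate one is the third: any scaling family $\chi_n(p) = \psi(p/a_n)$ fails because $p(\chi_n^{1/2})'(p) = (p/a_n)(\psi^{1/2})'(p/a_n)$ has an $L^\infty$ norm independent of $a_n$. The remedy is instead to \emph{flatten} a fixed profile by raising it to a vanishing power, which simultaneously widens the transition zone and shrinks $p(\chi_n^{1/2})'$.

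The plan is to work with the explicit family
\[ \chi_n^{1/2}(p) := \exp\!\left( -\frac{(1+p^2)^{1/n} - 1}{n} \right), \qquad \chi_n(p) := \bigl(\chi_n^{1/2}(p)\bigr)^2. \]
These functions are smooth (since $1+p^2 > 0$), even, strictly positive, bounded by $1$ (as $(1+p^2)^{1/n}\ge 1$), and strictly decreasing in $|p|$. At infinity $(1+p^2)^{1/n} \sim |p|^{2/n}$, so $\chi_n^{1/2}(p)$ behaves like $\exp(-|p|^{2/n}/n)$, which beats every polynomial together with all its derivatives; hence $\chi_n^{1/2} \in \mathcal{S}(\m R)$. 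For the pointwise convergence, fix $p$: since $(1+p^2)^{1/n} - 1 \sim \log(1+p^2)/n$ as $n \to \infty$, the exponent of $\chi_n^{1/2}(p)$ is $O(1/n^2)$, so $\chi_n(p) \to 1$.

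It remains to verify the third condition. A direct differentiation gives
\[ p(\chi_n^{1/2})'(p) = -\frac{2p^2}{n^2\,(1+p^2)^{1-1/n}}\,\chi_n^{1/2}(p), \]
which, since $p^2 \le 1+p^2$, is bounded in modulus by $\tfrac{2}{n^2}(1+p^2)^{1/n}\,\chi_n^{1/2}(p)$. Setting $u := ((1+p^2)^{1/n}-1)/n \ge 0$, this equals $\tfrac{2}{n}\bigl(u+\tfrac{1}{n}\bigr)\,e^{-u}$, whose supremum over $u\ge 0$ is $O(1/n) \to 0$. The main (and really only) difficulty is to guess a candidate of this shape: once chosen, each of the three required properties reduces to an elementary calculation.
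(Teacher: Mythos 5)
Your construction is correct, and all three verifications go through: the formula $p(\chi_n^{1/2})'(p) = -\tfrac{2p^2}{n^2}(1+p^2)^{1/n-1}\chi_n^{1/2}(p)$ is right, the substitution $u=((1+p^2)^{1/n}-1)/n$ turns the bound into $\tfrac{2}{n}(u+\tfrac1n)e^{-u}\le \tfrac{2}{n}$, and smoothness plus the stretched-exponential decay $e^{-|p|^{2/n}/n}$ (with polynomially bounded factors from differentiating $(1+p^2)^{1/n}$) gives $\chi_n^{1/2}\in\mathcal S(\m R)$. Your route is genuinely different from the paper's: there, $\chi_n^{1/2}$ is built as the mollification $\varphi_n*\psi$ of an explicit piecewise profile that equals $1$ up to $|p|=n$, then decays logarithmically as $1-\tfrac1n\ln(p/n)$ out to a matching point $\alpha_n\sim ne^n$, and finally switches to $e^{-p}$; the key smallness $\sup_p|p\varphi_n'(p)|=O(1/n)$ comes from the $1/(np)$ derivative of the logarithmic piece, and one must then check that this bound and the monotonicity survive convolution with $\psi$. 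Your closed-form candidate sidesteps the mollification step entirely (the function is already smooth) and replaces the piecewise bookkeeping with one clean calculus computation, which is arguably tidier; the paper's version makes the geometric mechanism (a transition zone of logarithmic width) more visible and keeps $\chi_n\equiv 1$ exactly on $[-n,n]$, though none of the later arguments in Section 4 require that exact flatness. Both constructions achieve the same quantitative rate $O(1/n)$ for the third condition, so either serves the subsequent error estimates equally well.
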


\begin{proof}
Define the function $\varphi_n$ as follows: $\varphi_n$ is even and
\[ \varphi_n(p) = \begin{cases}
1 & \text{if } |p| \le n \\
1 - \frac{1}{n} \ln (p/n) & \text{if } n \le p \le \alpha_n \\
e^{-p} & \text{if } p \ge \alpha_n.
\end{cases} 
\]
where $\alpha_n>0$ is chosen so that $\varphi_n$ is continuous, that is $\ds 1 - \frac{1}{n} \ln \left( \frac{\alpha_n}{n} \right) = e^{-\alpha_n}$. One can check that $\alpha_n \in [n e^n-1, n e^n]$.

It follows that $0 < \varphi_n \le 1$, $\varphi_n$ is non increasing on $[0,+\infty)$, and $\sup_{p \in \m R}  |p \phi_n' (p)| = O(1/n)$. Then let $\psi \in \q D(\m R)$ be non negative, even and $\| \psi \|_{L^1} =1$. One can see that $\chi_N := (\varphi_n * \psi)^2$ answers the question.
\end{proof}

Define, for any $u\in \mathcal{S}'(\m R)$,
\[
\widehat{\Pi_n u}(p) = \chi_n(p)\hat{u}(p).
\]
Throughout this section, we shall study the properties of the solutions of
\begin{equation}\tag{$\Pi_n$-mKdV}\label{pimkdv}
\begin{cases}
\partial_t u + \partial_{xxx} u  + \epsilon\Pi_n \partial_x (u^3) =0,  \\
u(1)=\Pi_n u_1, 
\end{cases}
\end{equation}
where $u_1 \in \q E(1)$ is given. Equivalently, we consider the equation
\begin{equation}
\partial_t \tilde{u} =-\frac{\epsilon\chi_n}{4\pi^2}\cal N[u],\quad \tilde{u}(1)=\chi_n \tilde{u}_1.
\end{equation}
(with the slight abuse of notation $\tilde u_1 = \widehat{ \mathcal{G}(-1) u_1}$). Define
\begin{equation}
\|u\|_{X_n(t)} :=\|  \widehat{\mathcal{G}(-t) u} \chi_n^{-1}\|_{L^\infty} + \left\| \partial_p ( \widehat{\mathcal{G}(-t) u}) \chi_n^{-1/2} \right\|_{L^2((0,+\infty))}
\end{equation}
and the space
\[ X_n(t) := \left\{ u\in \mathcal{S}'(\m R): \| u \|_{X_n(t)} < \infty \right\}.  \]
Similarly, if $I \subset (0,+\infty)$ is an interval and $u$ a space-time function, we denote
\[ \| u \|_{X_n(I)} := \sup_{t \in I} \| u(t) \|_{X_n(t)} = \sup_{t \in I} \| \tilde u(t) \chi_n^{-1}\|_{L^\infty}  + \| \partial_p \tilde u(t) \chi_n^{-1/2} \|_{L^2((0,+\infty))}, \]
and 
\[  X_n(I) := \left\{ u\in \q  C(I,\mathcal{S}'(\m R)): \tilde u \chi_n^{-1} \in \q  C(I, \q C_b((0,+\infty))),\  \partial_p \tilde u \chi_n^{-1/2} \in \q  C(I, L^2((0,+\infty)))  \right\}. \]
Observe that if $u \in \q E(1)$, then 
\[ \|\Pi_n u_1\|_{X_n(1)} \le \|u_1\|_{\q E(1)}. \]

\begin{prop} \label{prop:u_n}
	Given any $u_1\in \q E(1)$, there exists $T_{-,n} <1$, $T_{+,n}>1$ and a unique 
	$u_n \in X_n((T_{-,n},T_{+,n})) $ maximal solution of \eqref{pimkdv}. Moreover, if $T_{+,n}<\infty$, then
\[	\lim_{t\to T_+} \|u_n(t)\|_{X_n(t)} = +\infty. \]
(A similar statement holds at $T_-$).

In particular, $u \in \q E((T_{-,n},T_{+,n}))$.
\end{prop}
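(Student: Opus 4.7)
The approach is a standard Banach fixed point argument in $X_n$, exploiting the fact that the cutoff $\chi_n$ tames high frequencies and turns the trilinear nonlinearity into a bounded operator for each fixed $n$ (at the price of constants that blow up as $n \to \infty$). Passing to the profile, \eqref{pimkdv} is equivalent to the Duhamel equation
\[ \tilde u(t,p) = \chi_n(p)\, \tilde u_1(p) - \frac{\epsilon}{4\pi^2} \int_1^t \chi_n(p)\, \cal N[u](s,p)\, ds. \]
The plan is to define the map $\Phi$ on $X_n(I_T)$, with $I_T = [1-T, 1+T]$, by declaring $\widetilde{\Phi u}$ to be the right-hand side, and to find a fixed point by contraction on a ball.

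The key analytic step is a trilinear estimate of the form $\|\Pi_n \cal N[u]\|_{X_n(t)} \lesssim C_n \|u\|_{X_n(t)}^3$, together with its Lipschitz analog for differences. From the integral representation
\[ \cal N[u](t,p) = ip \iint_{p_1+p_2+p_3=p} e^{it(p^3-p_1^3-p_2^3-p_3^3)}\, \tilde u(p_1)\tilde u(p_2)\tilde u(p_3)\, dp_1\, dp_2, \]
the pointwise bound $|\tilde u(p)| \le \chi_n(p) \|u\|_{X_n}$ yields $|\cal N[u](p)| \le |p|\, (\chi_n * \chi_n * \chi_n)(p)\, \|u\|_{X_n}^3$. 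Since $\chi_n^{1/2} \in \cal S$, the function $\chi_n^{*3}$ is Schwartz as well, so $p \mapsto |p|\, \chi_n^{*3}(p)$ is bounded by some constant $C_n$; this controls the $L^\infty$-part of $\|\widetilde{\Phi u}/\chi_n\|_{L^\infty}$. For the $L^2$-part I split
\[ \frac{\partial_p(\chi_n \cal N[u])}{\chi_n^{1/2}} = 2\, (\chi_n^{1/2})'\, \cal N[u] + \chi_n^{1/2}\, \partial_p \cal N[u], \]
bound the first summand by $\|(\chi_n^{1/2})'\|_{L^2}\, \|\cal N[u]\|_{L^\infty}$, and expand $\partial_p \cal N[u]$ by letting the derivative fall on the phase (producing a polynomial factor $\partial_p \Phi = 3(p_1+p_2)(p+p_3)$ absorbed by the Schwartz weight $\chi_n^{1/2}$) and on $\tilde u(p-p_1-p_2)$ (producing $\partial_p \tilde u$, handled via Young's inequality using $\|\partial_p \tilde u\|_{L^2} \le \|u\|_{X_n}$ and $\|\tilde u\|_{L^1} \le \|\chi_n\|_{L^1} \|u\|_{X_n}$).

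With these trilinear bounds, choosing $R = 2\|\Pi_n u_1\|_{X_n(1)}$ and $T = T(n,R) > 0$ sufficiently small makes $\Phi$ a strict contraction on the ball $\{u \in X_n(I_T) : \|u\|_{X_n(I_T)} \le R\}$, yielding a unique local solution $u_n$; uniqueness in all of $X_n(I_T)$ follows by a standard connectedness argument using the continuity of $t \mapsto \|u_n(t)\|_{X_n(t)}$. Iterating the local theorem from later times extends $u_n$ to a maximal interval $(T_{-,n}, T_{+,n})$; if $T_{+,n} < \infty$ but $\|u_n(t)\|_{X_n(t)}$ stayed bounded as $t \to T_{+,n}^-$, one could restart the fixed point near $T_{+,n}$ and extend further, contradicting maximality. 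Finally, $\chi_n \le 1$ and $\chi_n^{1/2} \le 1$ yield the pointwise inequality $\|v\|_{\q E(t)} \le \|v\|_{X_n(t)}$ for every $v$, hence $u_n \in \q E((T_{-,n}, T_{+,n}))$.

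The main obstacle is the careful verification of the trilinear estimate for $\partial_p(\chi_n \cal N[u])$ in the $\chi_n^{-1/2}$-weighted $L^2$ norm: one must control how derivatives interact both with the oscillatory phase and with the merely $L^2$ function $\partial_p \tilde u$, while checking that every polynomial factor in $p$ produced by the phase derivative is absorbed by the Schwartz weight $\chi_n^{1/2}$ (this is precisely why the cutoff $\chi_n$ is required to be Schwartz, not merely compactly supported as in the usual Friedrichs scheme). Once this algebra is set up, the contraction, continuation, blow-up alternative, and $\q E$-inclusion follow standard patterns.
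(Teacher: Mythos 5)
Your proposal is correct and follows essentially the same route as the paper: a Banach fixed point for the profile Duhamel map in the ball of $X_n([1-T,1+T])$, using the pointwise bound $|\tilde u|\le \chi_n\|u\|_{X_n}$ to reduce the trilinear term to a (Schwartz) triple convolution of $\chi_n$, splitting $\partial_p(\chi_n\cal N[u])\chi_n^{-1/2}$ into the $(\chi_n^{1/2})'$ piece and the piece where the derivative hits the phase or $\tilde u$, and then concluding with the standard continuation/blow-up alternative and the pointwise inequality $\|\cdot\|_{\q E(t)}\le\|\cdot\|_{X_n(t)}$. The only cosmetic differences are that the paper bounds $|p|\chi_n^{*3}(p)$ by observing that one of the three frequencies must exceed $|p|/3$ (rather than invoking that $\chi_n^{*3}$ is Schwartz) and uses Cauchy--Schwarz with the $\chi_n^{-1/2}$ weight where you use Young's inequality; both are equivalent for fixed $n$.
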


\begin{proof}
	This is a standard fixed-point argument (in the estimates below, the implicit constants are allowed to depend on $n$). We work for times larger than 1, the other case is similar. For $T>1, M>0$, let
	\begin{equation}
	B_{n}(T,M)=\left\{ u \in X_n([1,T]):  \|u\|_{X_n([0,T])} \le M \right\}
	\end{equation}
	endowed with the natural distance
\[ 
	d(u,v)=  \|u-v\|_{X_n([0,T])},
\] 
	and
\[
	(\Psi(u))(t,p)= \chi_n (p) \tilde{u}_1(p) -\frac{\epsilon\chi_n(p)}{4\pi^2}\int_1^t \cal N[u](s,p)ds.
\]
	Using the strong decay on the Fourier side, that is, for any $u \in  X_{n}(T,M)$,
\[
\forall t \in [1,T], \forall p \in \m R, \quad |\tilde{u}(t,p)|\le M\chi_n(p), \]
 one may easily obtain the necessary bounds on $\Psi$. Indeed, we estimate
\begin{align}\label{eq:boundNporchiN}
|\cal N[u|(t,p)|&\lesssim |p| \iint_{q_1+q_2+q_3=p} \chi_n(q_1) \chi_n(q_2) \chi_n(q_3) dq_1dq_2 \|u\|_{X_n}^3 \\
& \lesssim |p| \sup_{|q_3| \ge |p/3|} \chi_{n} (q_3) \| \chi_n \|_{L^1}^2\|u\|_{X_n} ^3 \lesssim \|u\|_{X_n}^3, \nonumber
\end{align}
where we used the fact that at least one of the variables $q_1, q_2$ and $q_3$ has modulus at least $|p/3|$ . Hence for $t \in [1,T]$,
\begin{align*}
\|\hat u(t) \chi_n^{-1}\|_{L^\infty} & = \|\tilde{u}(t) \chi_n^{-1}\|_{L^\infty} \lesssim \|\chi_n \tilde{u}_1 \chi_n^{-1}\|_{L^\infty} + \left\| \int_1^t \cal N[u](s)ds \right\|_{L^\infty} \\
& \lesssim \| \tilde{u}_1 \|_{L^\infty} + (T-1)\sup_{t\in[1,T]} \left\| \cal N[u](t) \right\|_{L^\infty}\\
&\lesssim \|\tilde{u}_1 \|_{L^{\infty}} + (T-1)M^3.
\end{align*}
Similar to estimate \eqref{eq:boundNporchiN}, we have
\begin{align*}
|\partial_p\cal N[u](t,p)|&\lesssim t|p|\iint_{q_1+q_2+q_3=p}(|p|^2+|q_3|^2)\chi_n(q_1) \chi_n(q_2) \chi_n(q_3) dq_1dq_2 \|u\|_{X_n}^3\\&\quad+ |p|\iint_{q_1+q_2+q_3=p} \chi_n(q_1) \chi_n(q_2) |\partial_{p}u(q_3)| dq_1dq_2\|u\|_{X_n}^2\\&\lesssim |p| \sup_{|q_3| \ge |p/3|} (p^2+q_3^2)\chi_{n} (q_3) \| \chi_n \|_{L^1}^2\|u\|_{X_n} ^3 \\&\quad+ |p|\left(\iint_{q_1+q_2+q_3=p}\chi_n(q_1)^2 \chi_n(q_2) \chi_n(q_3) dq_1dq_2 \right)^{1/2}\\&\quad\quad\times\left(\iint \chi_{n}(q_2)|\partial_{p}u(q_3)|^2\chi_{n}^{-1}(q_3)dq_2dq_3\right)^{1/2}\|u\|_{X_n}^2\lesssim \|u\|_{X_n}^3.
\end{align*}

This implies the direct bound
\begin{align*}
\MoveEqLeft \| \partial_p\tilde{u}(t) \chi_n^{-1/2} \|_{L^2((0,+\infty))} \\
& \lesssim \| \chi_n \partial_p\tilde{u}_1 \chi_n^{-1/2}\|_{L^2((0,+\infty))} + (T-1) \| \chi_n^{1/2} \|_{H^1}\sup_{t \in [1,T]} \| \cal N[u](t)\|_{W^{1,\infty}} \\
& \lesssim \| \partial_p\tilde{u}_1 \|_{L^2((0,+\infty))} + (T-1)\|u\|_{X_n}^3.
\end{align*}
Thus
\[ \|(\Psi(u)) \|_{X_n([0,T])} \le C\left(\| u_1 \|_{\q E(1)} + (T-1)M^3\right). \]
	Analogous computations yield 
\[ d(\Psi(u),\Psi(v))\le C (T-1)M^2 d(u,v). \]
(since $\cal N$ is a trilinear operator). Choosing $M$ and $T$ such that 
\[ C \left(\|\tilde{u}_1\|_{X_n} + (T-1)M^3\right) \le M, \]
 and $C (T-1)M \le 1/2$, we see that $\Psi: B_{n}(T,M) \to  B_{n}(T,M) $ is a contraction. The result now follows from Banach's fixed point theorem. 
\end{proof}

To conclude the construction of a solution,  we need the time interval on which the approximating sequence is defined to remain wide independently of $n$. To that end, we need some \emph{a priori} bounds.

\begin{lem}[$L^\infty$ bound for \eqref{pimkdv}]\label{lem:desenvassimptaprox}
	Given $u_1\in \q E(1)$, denote $u$ the corresponding solution of \eqref{pimkdv}, given by Proposition \ref{prop:u_n} and defined on $(T_{-,n},T_{+,n})$. Let $I \subset (T_{-,n},T_{+,n})$. Then\begin{equation}\label{eq:Linftyaprox}
\forall t \in I, \quad \| \tilde{u}_n(t) \chi_n^{-1} \|_{L^\infty} \le \| \tilde{u}_1 \chi_n^{-1} \|_{L^\infty} + C(\|u_n\|_{\q E(I)} ^3 + \|u_n\|_{\q E(I)}^5)
	\end{equation}
	and
\[
	\|\partial_t \tilde{u}_n \chi_n^{-1} \|_{L^\infty}\lesssim \frac{1}{t}\|u_n\|_{\q E(I)} ^3.
\]
	Moreover, if one defines
\[
	E^n(t,p)=\exp\left(- i\epsilon\int_1^t\frac{p^3\chi_n(p)}{4\pi \jap{p^3s}}|\tilde{u}_n(s,p)|^2ds\right).
\]
	then for all $t, \tau \in I$,
\[
	|(\tilde{u}E^n)(t,p) - (\tilde{u}E^n)(\tau,p)|\chi_{n}^{-1}(p)\le C(\|u_n\|_{\q E(I)}^3 + \|u_n\|_{\q E(I)}^5)) \min \left( \frac{|t-\tau|}{\jap{p^3}},\frac{1}{\jap{\tau p^3}^{\frac{1}{12}}} \right).
\]
\end{lem}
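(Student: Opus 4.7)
The proof follows very closely that of Lemma \ref{lem:desenvassimpt}, the key observation being that $\chi_n$ is time-independent and the phase $E^n$ is designed so that $\chi_n$ factors out cleanly. Concretely, $w_n := \tilde u_n \chi_n^{-1}$ satisfies $\partial_t w_n = -\tfrac{\epsilon}{4\pi^2}\cal N[u_n]$, i.e.\ the \emph{same} equation governing $\tilde u$ in Lemma \ref{lem:desenvassimpt}. Since $u_n \in \q E(I)$ by Proposition \ref{prop:u_n}, we may apply Lemma \ref{lem:desenvolveoscilatorio} to obtain the expansion of $\cal N[u_n]$, with remainder controlled by \eqref{eq:N_remainder}. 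The second (pointwise) estimate $\|\chi_n^{-1}\partial_t \tilde u_n\|_{L^\infty} = \|\tfrac{\epsilon}{4\pi^2}\cal N[u_n]\|_{L^\infty}\lesssim t^{-1}\|u_n\|_{\q E(t)}^3$ is then immediate, using $p^3/\jap{p^3 t}\le 1/t$ for the leading term and $\|R[u_n](t)\|_{L^\infty_p} \lesssim t^{-1}\|u_n(t)\|_{\q E}^3$ for the remainder.

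For the $L^\infty$ bound and the continuity estimate, one introduces $v_n := \tilde u_n E^n$. Because
\[ \partial_t E^n = -\frac{i\epsilon p^3 \chi_n}{4\pi\jap{p^3 t}}|\tilde u_n|^2 E^n, \]
and the resonant term in $\partial_t \tilde u_n$ coming from the expansion of $\cal N[u_n]$ also carries the factor $\chi_n$, the contributions of $i|\tilde u_n|^2 \tilde u_n$ in $\partial_t v_n$ cancel. This leaves only the oscillatory and remainder terms, both of which carry $\chi_n(p)$ as a time-independent overall multiplier. Integrating from $\tau$ to $t$ and dividing by $\chi_n(p)$, one obtains an identity for $\chi_n^{-1}(v_n(t,p)-v_n(\tau,p))$ whose right-hand side no longer involves $\chi_n$:
\[
\chi_n^{-1}\bigl(v_n(t,p) - v_n(\tau,p)\bigr) = \frac{\epsilon}{4\pi\sqrt 3}\int_\tau^t \frac{p^3 e^{-8isp^3/9}}{\jap{p^3 s}}E^n\, \tilde u_n^3\bigl(s,\tfrac{p}{3}\bigr) ds - \frac{\epsilon}{4\pi^2}\int_\tau^t E^n R[u_n]\, ds.
\]

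The oscillatory integral is treated by the same integration by parts $e^{-8isp^3/9} = \partial_s(s e^{-8isp^3/9})/(1 - 8isp^3/9)$ used to establish \eqref{eq:claim}; the required bounds on $\partial_s E^n$ and $\partial_s \tilde u_n$ are uniform in $n$ since $\chi_n\le 1$ and $\|u_n\|_{\q E(I)}$ controls everything. The remainder integral is controlled by \eqref{eq:N_remainder} together with the change of variables $\sigma = p^3 s$. Taking $L^\infty_p$ with $\tau = 1$ and using $|E^n|=1$ together with $\tilde u_n(1)\chi_n^{-1} = \tilde u_1$ yields \eqref{eq:Linftyaprox}; the same identity with general $\tau$ produces the continuity estimate with the claimed $\min$. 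The only subtle point is to match precisely the $\chi_n$ in the phase of $E^n$ with the $\chi_n$ multiplying $\cal N[u_n]$ in the equation so that the resonant cancellation takes place; once this is done, all remaining $\chi_n$'s are time-independent prefactors that are eliminated by the division, and the estimates reduce verbatim to those of Lemma \ref{lem:desenvassimpt}.
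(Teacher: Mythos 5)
Correct, and this is essentially the paper's own approach: the paper proves this lemma simply by invoking the argument of Lemma \ref{lem:desenvassimpt}, and your write-up supplies exactly that adaptation, rightly identifying that the time-independence of $\chi_n$, the matching $\chi_n$ in the phase of $E^n$ (which preserves the resonant cancellation), and the bound $\chi_n\le 1$ make all the estimates of Lemma \ref{lem:desenvassimpt} carry over uniformly in $n$. The only blemish is the immaterial sign of the oscillatory term in your displayed identity (compare \eqref{eq:perfil}), which plays no role since only absolute values are used.
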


\begin{proof}
The proof follows the line of Lemma \ref{lem:desenvassimpt}, we leave the details to the reader.
\end{proof}

Now we look for an \emph{a priori} bound for $\partial_p \tilde{u}_n$. Define the operator
\[
\widehat{\mathcal{I}u}(t,p)= i\partial_p \hat{u}(t,p) - \frac{3it}{p}\partial_t\hat{u}(t,p) = ie^{itp^3}\left(\partial_p\tilde{u} - \frac{3t}{p}\partial_t\tilde{u}\right),
\]
which corresponds to the formal operator
\[
x+3t\int_{-\infty}^x \partial_t dx'.
\]
Using the definition, one may check that, if
\[
\widehat{\Pi_n'u}:= \chi_n'\hat{u}
\]
then
\[
\mathcal{I}\left(\Pi_n u \right)= \Pi_n \mathcal{I}u + i\Pi_n'u.
\]
Moreover, if we let $L = \partial_t + \partial_{xxx}$,
\[
\widehat{L\mathcal{I}u}=\widehat{\mathcal{I}Lu} + \frac{3i}{p}\widehat{Lu},\quad  \mathcal{I}(u^3)_x = 3u^2 \left(\mathcal{I}u\right)_x - 3u^3.
\]

\begin{lem}[$\dot{H}^1$ bound for \eqref{pimkdv}]\label{lem:H1aprox}
Given $u_1\in \q E(1)$, the corresponding solution $u_n$ of \eqref{pimkdv} satisfies
\[
\widehat{\mathcal{I}u_n}\in \q C^1((T_{-,n},T_{+,n}), L^2((0,+\infty), \chi_n^{-1}dp)).
\]
There exists a universal constant $\kappa >0$, such that, for $1<t<T_{+,n}$,
\begin{align}
\forall t \in [1,T_{+,n}) \quad \left(\int_0^\infty |\widehat{\mathcal{I}u_n}(t,p)|^2\chi_n^{-1}dp \right)^{1/2} & \le \left(\int_0^\infty |\widehat{\mathcal{I}u_n}(1,p)| \chi_n^{-1} dp \right)^{1/2}t^{\kappa \| u_n \|_{\q E([1,t])}^2} \nonumber\\
&\qquad + o_n(1) \| u_n \|_{\q E([1,t])}^3  t^{1/6}\label{eq:H1aprox_tmaior}, \\
\forall t \in (T_{-,n},1], \quad \left(\int_0^\infty |\widehat{\mathcal{I}u_n}(t,p)|^2\chi_n^{-1}dp \right)^{1/2} &\le \left(\int_0^\infty |\widehat{\mathcal{I}u_n}(1,p)| \chi_n^{-1} dp \right)^{1/2}t^{-\kappa\| u_n \|_{\q E([t,1])}^2} \nonumber\\&\qquad+ o_n(1) \| u_n \|_{\q E([t,1])}^3  t^{1/6}\label{eq:H1aprox_tmenor}.
\end{align}

\end{lem}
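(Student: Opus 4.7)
The plan is to set up a weighted $L^2$ energy inequality for $\widehat{\mathcal I u_n}$ against the density $\chi_n^{-1}$, and then close it by Gronwall. Set $w = \mathcal I u_n$ and
\[
E(t) := \int_0^{+\infty} |\widehat{w}(t,p)|^2 \chi_n^{-1}(p)\, dp.
\]
As $u_n$ is real and $\chi_n$ is even, a direct check shows that $\widehat{\mathcal I u_n}$ is Hermitian, so $2E = \int_{\m R} |\widehat{w}|^2 \chi_n^{-1}\, dp$, which is more convenient for Plancherel. Combining the three identities
\[
\mathcal I (\Pi_n u) = \Pi_n \mathcal I u + i \Pi_n' u, \quad \mathcal I (u^3)_x = 3 u^2 (\mathcal I u)_x - 3 u^3, \quad \widehat{L \mathcal I u} = \widehat{\mathcal I L u} + \tfrac{3i}{p}\widehat{L u},
\]
applied to $L u_n = -\epsilon \Pi_n \partial_x (u_n^3)$, a short computation (in which the two $\chi_n \widehat{u_n^3}$ contributions, coming respectively from the $-3u^3$ term in $\mathcal I(u^3)_x$ and from the commutator, cancel exactly) yields the clean expression
\[
\widehat{Lw} = -3\epsilon \chi_n \widehat{u_n^2 \partial_x w} + \epsilon p \chi_n' \widehat{u_n^3}.
\]
Since $\widehat{\partial_t w} = \widehat{Lw} + ip^3 \hat w$ and $ip^3|\hat w|^2 \in i\m R$, this gives
\[
\tfrac{1}{2}\,\tfrac{d E}{dt} = \Re \int_{\m R} \overline{\hat w}\, \widehat{Lw}\, \chi_n^{-1}\, dp.
\]

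For the first (main) contribution, the weights $\chi_n$ and $\chi_n^{-1}$ cancel; Plancherel transfers the integral to physical space where one integrates by parts to obtain (up to a constant) $-\int u_n (\partial_x u_n)\, w^2\, dx$. The IBP is legitimate because the cut-off $\chi_n$ forces $u_n$ to be smooth and rapidly decaying in $x$ at each fixed time. Invoking the pointwise bound $\|u_n \partial_x u_n\|_{L^\infty} \lesssim t^{-1} \|u_n\|_{\q E(t)}^2$ from Lemma \ref{lema:decayE}, together with $\|w\|_{L^2}^2 \le 2E$ (since $\chi_n \le 1$), this contribution is at most $C t^{-1}\|u_n\|_{\q E(t)}^2\, E(t)$. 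For the second contribution, Cauchy--Schwarz and Plancherel give
\[
\left| \Re \int_{\m R} \overline{\hat w}\, p\chi_n'\, \widehat{u_n^3}\, \chi_n^{-1}\, dp \right| \le \sqrt{2 E(t)}\cdot \bigl\| 2 p (\chi_n^{1/2})' \bigr\|_{L^\infty} \cdot \|\widehat{u_n^3}\|_{L^2},
\]
using $p\chi_n'/\chi_n^{1/2} = 2p(\chi_n^{1/2})'$. The middle factor is $o_n(1)$ by the defining property of $\chi_n$, while $\|\widehat{u_n^3}\|_{L^2} = \|u_n\|_{L^6}^3 \lesssim t^{-5/6}\|u_n\|_{\q E(t)}^3$ by \eqref{eq:est4lem1}.

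Combining, $\tfrac{dE}{dt} \le 2 C\, t^{-1} \|u_n\|_{\q E}^2\, E + o_n(1)\, t^{-5/6}\|u_n\|_{\q E}^3 \sqrt{E}$, i.e.\ setting $F := \sqrt E$,
\[
F'(t) \le C\, t^{-1}\|u_n\|_{\q E([1,t])}^2\, F(t) + o_n(1)\, t^{-5/6}\|u_n\|_{\q E([1,t])}^3.
\]
Gronwall with $\kappa := C$ then yields
\[
F(t) \le F(1)\, t^{\kappa \|u_n\|_{\q E}^2} + o_n(1)\, \|u_n\|_{\q E}^3\, t^{\kappa \|u_n\|_{\q E}^2}\int_1^t s^{-5/6 - \kappa \|u_n\|_{\q E}^2}\, ds,
\]
and provided $\kappa \|u_n\|_{\q E}^2 < 1/6$ (which holds on a suitable interval thanks to \eqref{eq:Linftyaprox}) the remaining integral is $\lesssim t^{1/6 - \kappa\|u_n\|_{\q E}^2}$, which gives exactly \eqref{eq:H1aprox_tmaior}. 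The estimate \eqref{eq:H1aprox_tmenor} for $t<1$ is obtained by running the same argument backward in time. The $C^1$ regularity of $\widehat{\mathcal I u_n}$ as an $L^2((0,+\infty), \chi_n^{-1}dp)$-valued function then follows from the explicit expression above for $\partial_t \widehat{\mathcal I u_n}$, since all its right-hand terms are continuous in $t$ with values in that space (using the rapid decay of $\chi_n$, the boundedness furnished by Proposition \ref{prop:u_n}, and Lemma \ref{lem:desenvassimptaprox}).

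The main obstacle, and what makes the estimate nontrivial, is the algebraic reorganization of $\widehat{L \mathcal I u_n}$ that leads to the cancellation of the $\chi_n \widehat{u_n^3}$ contributions: without this cancellation the forcing term would be only $O(1) \cdot t^{-5/6}\|u_n\|_{\q E}^3$ rather than $o_n(1) \cdot t^{-5/6}\|u_n\|_{\q E}^3$, and one would lose the key $o_n(1)$ factor that is crucial when passing to the limit $n\to +\infty$ in the next section. A secondary point is the careful use of the defining property $\|p(\chi_n^{1/2})'\|_{L^\infty} \to 0$, which is the only quantitative information on $\chi_n$ that enters the argument and explains the somewhat exotic condition imposed on the cut-off sequence.
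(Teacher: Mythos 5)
Your proof is correct and follows essentially the same route as the paper: the same cancellation of the $\chi_n\widehat{u_n^3}$ contributions in $\widehat{L\mathcal{I}u_n}$, the same weighted energy identity with the cubic term controlled by $\|u_n\partial_x u_n\|_{L^\infty}$ after an integration by parts (which is the physical-space version of the paper's Fourier-side symmetrization), the same Cauchy--Schwarz step using $p\chi_n'\chi_n^{-1/2}=2p(\chi_n^{1/2})'$ to produce the crucial $o_n(1)$ factor, and the same Gronwall argument. The only quibbles are that the paper additionally truncates the frequency integral away from $p=0$ and passes to the limit $\e\to 0$ at the end, and that your stated justification of the integration by parts is inaccurate: $u_n$ is \emph{not} rapidly decaying in $x$ (its Fourier transform is not smooth, e.g.\ it may jump at $p=0$), but the step is still licit because $u_n,\partial_x u_n\in L^\infty$ while $\mathcal{I}u_n$ and $\partial_x\mathcal{I}u_n$ lie in $L^2$ thanks to the cut-off.
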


\begin{proof}
Fix $\e>0$. First of all, notice that, by Lemma \ref{lem:desenvassimptaprox}, we have
\[
\widehat{\mathcal{I}u_n} = ie^{-itp^3}\left(\partial_p\tilde{u}_n - \frac{3t}{p}\partial_t\tilde{u}_n \right) \in L^2([\e,+\infty), \chi_n^{-1}dp),
\]
which justifies the finiteness of all the following integrations.
On the other hand,
\begin{align*}
\partial_t\widehat{\mathcal{I}u_n} - ip^3\widehat{\mathcal{I}u_n} & = \widehat{L\mathcal{I}u_n} = \widehat{\mathcal{I}Lu_n} - 3 \epsilon \widehat{\Pi_n (u_n^3)} = -\epsilon\widehat{\mathcal{I}(\Pi_n(u_n^3)_x)} -3 \epsilon  \chi_n\widehat{u_n^3}\\
& = -\epsilon\chi_n\widehat{\mathcal{I}(u_n^3)_x} - i\epsilon\chi_n' \widehat{(u_n^3)_x} -  3 \epsilon \chi_n \widehat{u_n^3} = -\epsilon\left(3\chi_n \widehat{u^2(\mathcal{I}u_n)_x} +ip\chi_n' \widehat{u_n^3}\right).
\end{align*}
Multiplying by $\overline{\widehat{\mathcal{I}u_n}} \chi_n^{-1}$, integrating on $\m R\setminus (-\varepsilon,\varepsilon)$ and taking the real part,
\begin{align*}
\frac{1}{2}\frac{d}{dt}\int_{\m R\setminus (-\varepsilon,\varepsilon)} |\widehat{\mathcal{I}u_n}(p)|^2\chi_n^{-1}(p)dp &=-\epsilon \partere \int_{\m R\setminus (-\varepsilon,\varepsilon)}\int_{p_1+p_2=p} \widehat{u_n^2}(p_1)p_2\widehat{\mathcal{I}u_n}(p_2)\overline{\widehat{\mathcal{I}u_n}}(p)dp_2dp \\
&\ \ \  -\epsilon\partere \int_{\m R\setminus (-\varepsilon,\varepsilon)} p\chi_n'(p)\chi_n^{-1/2}(p)\widehat{u_n^3}(p)\overline{\widehat{\mathcal{I}u_n}}(p)\chi_n^{-1/2}(p)dp \\
& =I_1 + I_2.
\end{align*}
For $I_1$, we split the integral in $p_2$:
\begin{align*}
\left| \int_{\m R\setminus (-\varepsilon,\varepsilon)}\int_{-\varepsilon}^{\varepsilon} \overline{\widehat{\mathcal{I}u_n}}(p) \widehat{\mathcal{I}u_n}(p_2)p_2 \widehat{u_n^2}(p_1)dp_2dp \right| & \lesssim \varepsilon\|\widehat{\mathcal{I}u_n}\|_{L^2(\m R\setminus(-\varepsilon,\varepsilon))}\|\widehat{\mathcal{I}u_n}\|_{L^2((0,+\infty))}\|\widehat{u_n^2}\|_{L^1} \\
& \to 0 \quad \text{as } \e \to 0.
\end{align*}
(Indeed $\|\widehat{u_n^2}\|_{L^1} \lesssim \| \tilde u_n \|_{L^1}^2<+\infty$).
Then observe that
\begin{align*}
\MoveEqLeft \partere \int_{\m R\setminus (-\varepsilon,\varepsilon)} \int_{\m R\setminus (-\varepsilon,\varepsilon)}\widehat{u^2_n}(p_1)p_2\widehat{\mathcal{I}u_n}(p_2)\overline{\widehat{\mathcal{I}u_n}}(p)dp_2dp \\
& = \partere \int_{\m R\setminus (-\varepsilon,\varepsilon)} \int_{\m R\setminus (-\varepsilon,\varepsilon)}\widehat{u^2_n}(p_1)(p-p_1)\widehat{\mathcal{I}u_n}(p_2)\overline{\widehat{\mathcal{I}u_n}}(p)dp_2dp \\
&= -\partere \int_{\m R\setminus (-\varepsilon,\varepsilon)} \int_{\m R\setminus (-\varepsilon,\varepsilon)}p_1\widehat{u^2_n}(p_1)\widehat{\mathcal{I}u_n}(p_2)\overline{\widehat{\mathcal{I}u_n}}(p)dp_2dp \\
&\quad- \partere \int_{\m R\setminus (-\varepsilon,\varepsilon)} \int_{\m R\setminus (-\varepsilon,\varepsilon)}\widehat{u^2_n}(p_1)p_2\widehat{\mathcal{I}u_n}(p_2)\overline{\widehat{\mathcal{I}u_n}}(p)dp_2dp.
\end{align*}
Hence, if we define the operator 
\[ \widehat{\m 1_\e v}=\m 1_{\m R\setminus(-\varepsilon,\varepsilon)}\hat{v}, \]
then
\begin{align*}
\MoveEqLeft \left|\partere \int_{\m R\setminus (-\varepsilon,\varepsilon)} \int_{\m R\setminus (-\varepsilon,\varepsilon)}\widehat{u_n^2}(p_1)p_2\widehat{\mathcal{I}u_n}(p_2)\overline{\widehat{\mathcal{I}u_n}}(p)dp_2dp \right| \\
&= \frac{1}{2}\left|\partere \int_{\m R\setminus (-\varepsilon,\varepsilon)} \int_{\m R\setminus (-\varepsilon,\varepsilon)}p_1\widehat{u_n^2}(p_1)\widehat{\mathcal{I}u}(p_2)\overline{\widehat{\mathcal{I}u_n}}(p)dp_2dp\right| \\
& \lesssim \left|\iint p_1\widehat{u_n^2}(p_1)\widehat{\m 1_\varepsilon \mathcal{I}u}(p_2)\overline{\widehat{\m 1_\varepsilon \mathcal{I}u_n}}(p)dp_2dp\right|\lesssim \left|\int (u_n^2)_x(x)|\m 1_\varepsilon(\mathcal{I}u_n)(x)|^2dx\right| \\
&\lesssim \|u_n \partial_x u_n\|_{L^\infty}\|\m 1_\varepsilon(\mathcal{I}u_n)\|_{L^2}^2 \lesssim \|u_n \partial_x u_n\|_{L^\infty}\int_{\m R\setminus (-\varepsilon,\varepsilon)} |\widehat{\mathcal{I}u_n}(p)|^2\chi_n^{-1}(p)dp.
\end{align*}

Now, to estimate $I_2$, we use Cauchy-Schwarz:
\begin{align*}
|I_2| &\lesssim \left(\int_{\m R\setminus (-\varepsilon,\varepsilon)} |\widehat{\mathcal{I}u_n}(p)|^2\chi_n^{-1}(p)dp\right)^{1/2}\left(\int|p\chi_n'(p)\chi_n^{-1/2}|^2|\widehat{u_n^3}|^2dp \right)^{1/2}\\
&\lesssim  \left(\int_{\m R\setminus (-\varepsilon,\varepsilon)} |\widehat{\mathcal{I}u_n}(p)|^2\chi_N^{-1}(p)dp\right)^{1/2} \| u_n(t) \|_{L^6}^3 \sup_{p\in\m R}|p(\chi_n^{1/2})'(p)|
\end{align*}
Here we crucially use the third condition on $\chi_n$.
Putting together these estimates, using Lemma \ref{lema:decayE} and the symmetry $\hat u_n(t,-p)=\overline{\hat u_n(t,p)}$,
\begin{align}
\MoveEqLeft \left|\frac{d}{dt}\left(\int_\varepsilon^\infty |\widehat{\mathcal{I}u_n}(p)|^2\chi_n^{-1}(p)dp\right)^{1/2}\right|\nonumber \\
&\label{eq:estH1aprox} \lesssim \| u \partial_x u \|_{L^\infty}\left(\int_\varepsilon^\infty |\widehat{\mathcal{I}u_n}(p)|^2\chi_n^{-1}(p)dp\right)^{1/2} + o_\varepsilon(1) + o_n(1) \| u_n (t) \|_{L^6}^3 \\
& \lesssim \frac{\| u(t) \|_{\q E(t)} ^2}{t}\left(\int_\varepsilon^\infty |\widehat{\mathcal{I}u_n}(p)|^2\chi_n^{-1}(p)dp\right)^{1/2} + o_\varepsilon(1) + o_n(1)\frac{\| u_n(t) \|_{\q E(t)}^3}{t^{5/6}}\nonumber
\end{align}
It follows that, for $t\ge 1$ and some universal constant $kappa>0$,
\begin{align*}
\left(\int_\varepsilon^\infty |\widehat{\mathcal{I}u_n}(p)|^2\chi_n^{-1}(t,p)dp\right)^{1/2} & \le \left(\int_\varepsilon^\infty |\widehat{\mathcal{I}u_n}(p)|^2(1,p)\chi_n^{-1}(p) dp \right)^{1/2}t^{\kappa \| u_n \|_{\q E(1,t)}^2} \\
&\quad + o_\varepsilon(t) +  \| u_n(t) \|_{\q E(t)}^3 o_n(t^{1/6}).
\end{align*}
Taking $\varepsilon\to 0$, the result follows. An analogous computation yields the inequality for $t<1$.
\end{proof}

\begin{prop}[Global existence]\label{prop:boundapprox}
	Given $u_1\in \q E(1)$ small, let $u_n$ be the unique maximal solution of \eqref{pimkdv} given by Proposition \ref{prop:u_n}. Then there exists $T=T(\|u_1\|_{\q E(1)})<1$ such that, if $n$ is large enough, $u_n$ is defined on $[T,+\infty)$ and
	\begin{equation}
	\| u_n \|_{\q E([T,+\infty))} \le C \|u_1\|_{\q E(1)}.
	\end{equation}
\end{prop}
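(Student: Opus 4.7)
The plan is a continuity/bootstrap argument based on the a priori estimates of Lemmas \ref{lem:desenvassimptaprox} and \ref{lem:H1aprox}. Set $M := \|u_1\|_{\q E(1)}$ and fix a universal constant $C_0 > 3$ (say $C_0 = 4$). By continuity of $t \mapsto \|u_n(t)\|_{\q E(t)}$ and since $\|\Pi_n u_1\|_{\q E(1)} \le M(1 + o_n(1))$ (the $\chi_n'\tilde u_1$ contribution to $\partial_p\tilde u_n(1)$ is $o_n(1)M$ because $\|\chi_n'\|_{L^2}\to 0$, itself a consequence of the explicit construction in the Claim), there is a maximal interval $J \ni 1$, $J \subset (T_{-,n}, T_{+,n})$, on which $\|u_n\|_{\q E(J)} \le 2C_0 M$. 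The goal is to show that $\|u_n(t)\|_{\q E(t)} \le C_0 M$ for every $t \in J \cap [T, +\infty)$, for $T = T(M) < 1$, $M$ small and $n$ large; this prevents $J$ from being strictly smaller than $[T, +\infty)$.

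The $L^\infty$ part of the $\q E$-norm is immediate from Lemma \ref{lem:desenvassimptaprox}: since $\tilde u_n(1)\chi_n^{-1} = \tilde u_1$ and $\chi_n \le 1$,
\[
\|\tilde u_n(t)\|_{L^\infty} \le \|\tilde u_n(t)\chi_n^{-1}\|_{L^\infty} \le \|\tilde u_1\|_{L^\infty} + C\bigl((2C_0 M)^3 + (2C_0 M)^5\bigr) \le M + C_1 M^3.
\]
For the $L^2$ part I would use Lemma \ref{lem:H1aprox} after establishing an initial-data estimate. Combining $\widehat{\mathcal{I}u_n}(t,p) = ie^{-itp^3}(\partial_p\tilde u_n - (3t/p)\partial_t\tilde u_n)$ with $\partial_t\tilde u_n = -\epsilon \chi_n \mathcal N[u_n]/(4\pi^2)$, at $t = 1$ one finds
\[
\widehat{\mathcal{I}u_n}(1,p)\chi_n^{-1/2}(p) = ie^{-ip^3}\Bigl[\, 2(\chi_n^{1/2})'\tilde u_1 + \chi_n^{1/2}\partial_p\tilde u_1 + \frac{3\epsilon}{4\pi^2 p}\chi_n^{1/2}\mathcal N[\Pi_n u_1](1,p) \,\Bigr].
\]
The middle term has $L^2((0,+\infty))$-norm at most $\|\partial_p\tilde u_1\|_{L^2((0,+\infty))} \le M$; the first is $o_n(1)M$ because $(\chi_n^{1/2})' = \varphi_n' * \psi$ with $\|\varphi_n'\|_{L^2}\to 0$ by direct calculation from the Claim; the third is $O(M^3)$ by Lemma \ref{lem:desenvolveoscilatorio} (the $p^3$ prefactor in $\mathcal N$ cancels the apparent $1/p$ singularity). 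Hence $\|\widehat{\mathcal{I}u_n}(1)\chi_n^{-1/2}\|_{L^2} \le M + o_n(1) + C_2 M^3$.

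Injecting this bound into Lemma \ref{lem:H1aprox} gives, for $t \in J$,
\[
t^{-1/6}\|\widehat{\mathcal{I}u_n}(t)\chi_n^{-1/2}\|_{L^2} \le (M + o_n(1) + C_2 M^3)\, t^{\pm\kappa(2C_0 M)^2 - 1/6} + o_n(1)(2C_0 M)^3,
\]
with the $+$ sign for $t \ge 1$ and $-$ for $t \le 1$. To recover $\partial_p\tilde u_n$ I use
\[
\partial_p\tilde u_n(t,p) = -ie^{itp^3}\widehat{\mathcal{I}u_n}(t,p) + \frac{3t}{p}\partial_t\tilde u_n(t,p),
\]
and estimate the second piece through Lemma \ref{lem:desenvolveoscilatorio}: the substitution $q = pt^{1/3}$ yields $3t\|\partial_t\tilde u_n/p\|_{L^2((0,+\infty))} \lesssim t^{1/6}\|u_n\|_{\q E(t)}^3$. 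Taking $M$ small enough that $\kappa(2C_0 M)^2 < 1/6$ makes the exponent of $t$ non-positive for $t \ge 1$; choosing $T = T(M, C_0) \in (0,1)$ with $T^{-\kappa(2C_0 M)^2 - 1/6} \le 2$ controls $t \in [T, 1]$. Summing the two parts of the $\q E$-norm produces $\|u_n(t)\|_{\q E(t)} \le 3M + O_{C_0}(M^3) + o_n(1) < C_0 M$, closing the bootstrap. The same ingredients give a finite (though $n$-dependent) bound for the $X_n$-norm, so the blow-up criterion from Proposition \ref{prop:u_n} is never triggered and $u_n$ is in fact defined on $[T, +\infty)$.

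The principal obstacle I anticipate is the initial-data estimate $\|\widehat{\mathcal{I}u_n}(1)\chi_n^{-1/2}\|_{L^2} \le M(1 + o_n(1)) + O(M^3)$: it demands slightly more from the sequence $(\chi_n)$ than the three bullet conditions, namely the quantitative vanishing $\|(\chi_n^{1/2})'\|_{L^2}\to 0$, which must be extracted from the explicit construction. A secondary difficulty is the $L^2$ control of $\mathcal N[u_n]/p$ near $p=0$, where the remainder in Lemma \ref{lem:desenvolveoscilatorio} is not sharp enough by itself and one has to exploit the structural $p^3$ prefactor in $\mathcal N$ to remove the apparent $1/p$ singularity.
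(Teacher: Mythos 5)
Your proposal is correct and follows essentially the same route as the paper: a continuity/bootstrap argument combining the $L^\infty$ bound of Lemma \ref{lem:desenvassimptaprox} with the weighted bound on $\widehat{\mathcal{I}u_n}$ from Lemma \ref{lem:H1aprox}, recovering $\partial_p \tilde u_n$ through the identity $\partial_p\tilde{u}_n = -ie^{itp^3}\widehat{\mathcal{I}u_n} + 3t\chi_n e^{-itp^3}\widehat{u^3_n}$ and the $L^6$ decay estimate \eqref{eq:est4lem1} — which is exactly the ``structural $p^3$ prefactor'' fix you flag, and is indeed needed since the remainder bound of Lemma \ref{lem:desenvolveoscilatorio} alone is logarithmically divergent in $L^2$ near $p=0$. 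Your explicit treatment of the initial-data term $2(\chi_n^{1/2})'\tilde u_1$ via the construction of $\chi_n$ is a more careful version of the paper's tacit bound on $\|\widehat{\mathcal{I}u_n}(1)\chi_n^{-1/2}\|_{L^2((0,+\infty))}$, so it is a welcome clarification rather than a deviation.
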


\begin{proof}
	Fix $\delta_0>\|u_1\|_{\q E(1)}$. Define
	\begin{equation}
	f_n(t)= \| \tilde{u}_n(t) \chi_n^{-1}\|_{L^\infty} + t^{-1/6}\|\partial_p \tilde{u}_n(t) \chi_N^{-1/2}\|_{L^2((0,+\infty))}
	\end{equation}
	and let $J_n$ be the maximal connected interval containing $t=1$ such that
	\[
 f_n(t) \le 4 C \delta_0,\quad t\in J_n.
	\]
	For $\delta_0$ sufficiently small and some $T<1$ close to 1, it follows from  Lemma \ref{lem:H1aprox} that, given $t\in J_n$, $t>T$,
	\[
	\left(\int_0^\infty |\widehat{\mathcal{I}u}(t,p)|^2\chi_n^{-1}dp \right)^{1/2}\le 2\left(\left(\int_0^\infty |\widehat{\mathcal{I}u}(1,p)|^2 \chi_n^{-1}dp\right)^{1/2} + o_n(1) f_n(s)^3\right)t^{1/6}.
	\]
	Recalling that
	\[
	\partial_p\tilde{u}_n = -ie^{itp^3}\widehat{\mathcal{I}u_n} + \frac{3t}{p}\partial_t\tilde{u}_n =  -ie^{itp^3}\widehat{\mathcal{I}u_n} + 3t\chi_n e^{-itp^3}\widehat{u^3_n},
	\] 
	we derive the bound for $\partial_p\tilde{u}_n$:
	\begin{align*}
	\| \partial_p\tilde{u}_n(t) \chi_n^{-1/2}\|_{L^2((0,+\infty))} & \le \| \widehat{\mathcal{I}u_n}(t) \chi_n^{-1/2} \|_{L^2((0,+\infty))} + 3t\| \widehat{u_n^3}(t) \chi_n^{1/2} \|_{L^2} \\
	& \lesssim t^{1/6}\left(\| \widehat{\mathcal{I}u_n}(1) \chi_n^{-1/2}\|_{L^2((0,+\infty))} + o_n(1)f_n(t)\right) + 3t \|u_n \|_{L^6}^3 \\
	& \lesssim t^{1/6}\left(\| \widehat{\mathcal{I}u_n}(1)\chi_n^{-1/2} \|_{L^2((0,+\infty))} + o_n(1)f_n(t) + f_n (t)^3\right).
	\end{align*}
	Together with the $L^\infty$ bound \eqref{eq:Linftyaprox}, we infer
	\begin{align*}
	f_n(t) & \le C\left(\|\chi_n^{-1}\tilde{u}(1)\|_{L^\infty} + \|\chi_n^{-1/2}\partial_p\tilde{u}(1)\|_{L^2((0,+\infty))} + o_n(1) f_n(t) + f_n(t)^3\right) \\
	& \le C\left(\|u_1\|_{\q E(1)} + o_N(1)f_n(t) + f_n(t)^3\right).
	\end{align*}
	If $n$ large and $4C\|u_1\|<\delta_0$, then a continuity argument implies that
	\begin{equation}
	\forall t \in J_n,\ t>T, \quad f_n(t) \le 2C\|u_1\|_{\q E(1)} < \frac{\delta_0}{2}.
	\end{equation}
	Hence $J_n$ must be equal to $[T,T_{+,n})$. By the definition of $J_n$ and the blow-up alternative, $T_{+,n} = +\infty$.
\end{proof}

\section{Well-posedness on the critical space} \label{sec:5}

\begin{prop}[Existence for small data]\label{prop:exist}
	 There exists $C,\delta>0$ such that, given $u_1\in \q E(1)$ with $\|u_1\|_{\q E(1)}<\delta$, there exist $T=T(\|u_1\|_{\q E(1)})<1$ and a unique $u\in \q{E}([T,\infty))$ solution of \eqref{mkdv} in the distributional sense such that $u(1)=u_1$. Moreover, there exists a universal constant $C>1$ such that
	 \begin{equation}\label{eq:boundfinal}
	 	 \|u\|_{\q{E}([T,\infty))}\le C \| u_1 \|_{\q E(1)}.
	 \end{equation}
\end{prop}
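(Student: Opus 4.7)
The plan is to pass to the limit $n \to \infty$ in the approximating sequence $u_n$ constructed in Section \ref{sec:4}. For $\delta$ sufficiently small and $\|u_1\|_{\q E(1)} < \delta$, Proposition \ref{prop:boundapprox} provides $T = T(\|u_1\|_{\q E(1)}) < 1$ and, for all $n$ large, solutions $u_n \in X_n([T, +\infty))$ to \eqref{pimkdv} satisfying the uniform bound $\|u_n\|_{\q E([T,+\infty))} \le C \|u_1\|_{\q E(1)}$. The solution $u$ will be obtained as a limit of a subsequence.

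First I would extract a convergent subsequence by a compactness argument. The family $\tilde u_n$ is uniformly bounded in $L^\infty$; Lemma \ref{lem:desenvassimptaprox} bounds $\partial_t \tilde u_n$ uniformly by $O(1/t)$, giving equicontinuity in $t$; and since $\chi_n \to 1$ locally uniformly away from $0$, the uniform bound on $\partial_p \tilde u_n \chi_n^{-1/2}$ in $L^2((0,+\infty))$ gives uniform $H^1$ control of $\tilde u_n$ on compact subsets of $(0,+\infty)$ bounded away from $0$, hence equicontinuity in $p$. An Arzel\`a--Ascoli/diagonal argument then produces a subsequence (still denoted $u_n$) and a continuous $\tilde u$ on $[T,+\infty) \times (\m R \setminus \{0\})$ such that $\tilde u_n \to \tilde u$ locally uniformly. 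Lower semicontinuity under this convergence, together with weak $L^2$ convergence of $\partial_p \tilde u_n$ on $(0,+\infty)$, yields $u \in \q E([T, +\infty))$ and the bound \eqref{eq:boundfinal}.

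Next, I would pass to the limit in the integrated equation
\[ \tilde u_n(t, p) = \chi_n(p) \tilde u_1(p) - \frac{\epsilon \chi_n(p)}{4\pi^2} \int_1^t \cal N[u_n](s, p) \, ds. \]
Using the asymptotic decomposition of Lemma \ref{lem:desenvolveoscilatorio}, $\cal N[u_n]$ splits into an explicit part involving only pointwise products of the form $|\tilde u_n(s,p)|^2 \tilde u_n(s,p)$ and $\tilde u_n(s, p/3)^3$, and a remainder $R[u_n]$ enjoying an $n$-independent pointwise bound that is time-integrable on compact intervals. Local uniform convergence of $\tilde u_n$ and $\chi_n \to 1$ let the boundary term and the main contributions pass to their limits pointwise, and dominated convergence handles the time integral of both the main terms and $R[u_n]$. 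This gives the integrated equation for $\tilde u$, hence \eqref{mkdv} in the distributional sense for $u$ on $[T,+\infty)$.

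Uniqueness on $[1,+\infty)$ given $u(1)=u_1$ follows directly from the forward uniqueness statement of Theorem \ref{th1} (uniqueness on $[T,1]$ is a separate matter, requiring a backward argument). The main obstacle in this programme is the passage to the limit in the nonlinearity: since smooth functions are \emph{not} dense in $\q E$ and $\tilde u$ may jump at $p=0$, one cannot rely on any approximation by nice functions nor on any fixed-point argument. The key is precisely that Lemma \ref{lem:desenvolveoscilatorio} expresses $\cal N[u]$ in a form whose leading terms are evaluated pointwise at $p$ and $p/3$, which is exactly the level of regularity made available by the compactness argument away from zero.
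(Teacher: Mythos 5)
Your overall architecture --- uniform bounds from Proposition \ref{prop:boundapprox}, compactness of the profiles $\tilde u_n$, lower semicontinuity for \eqref{eq:boundfinal} --- matches the paper's proof up to and including the identification of the limit $u \in \q E([T,+\infty))$. The gap is in how you pass to the limit in the equation. You propose to take limits in the integrated Fourier-side equation using the decomposition of Lemma \ref{lem:desenvolveoscilatorio}, and to ``handle the time integral of both the main terms and $R[u_n]$ by dominated convergence.'' For the main terms this works, since they are pointwise in $\tilde u_n(s,p)$ and $\tilde u_n(s,p/3)$. But for the remainder, the bound \eqref{eq:N_remainder} is only a \emph{uniform} bound: it gives domination, not convergence. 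Dominated convergence for $\int_1^t R[u_n](s,p)\,ds$ requires knowing $R[u_n](s,p)\to R[u](s,p)$ pointwise, i.e. $\cal N[u_n](s,p)\to\cal N[u](s,p)$. That is a non-absolutely-convergent trilinear oscillatory integral over the unbounded plane $\{q_1+q_2+q_3=1\}$, whose estimation (Appendix A) proceeds by integration by parts and so involves $\partial_p\tilde u_n$ --- which you only control weakly in $L^2((0,+\infty))$. Nothing in your compactness step gives continuity of $u\mapsto\cal N[u](t,p)$ under locally uniform convergence of profiles plus weak convergence of derivatives, and the integral of $R[u_n]$ is bounded but not small, so it cannot simply be discarded. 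As written, the limit equation is not established.

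The paper avoids this entirely by returning to physical space: using the uniform decay $|u_n(t,x)|\lesssim\jap{x}^{-1/4}$ of Lemma \ref{lema:decayE} together with Ascoli--Arzel\`a, it upgrades the Fourier-side convergence to $u_n(t)\to u(t)$ in $L^\infty(\m R)$, whence $(u_n)^3\to u^3$ in $\cal D'$ and one passes to the limit in $(\partial_t+\partial_{xxx})u_n=-\epsilon\Pi_n\partial_x(u_n^3)$ directly in the distributional sense --- the nonlinearity is local there and no oscillatory integral needs to be touched. I recommend you adopt this route, or else supply a genuine continuity argument for $\cal N$ (which would amount to redoing the Appendix A estimates for differences). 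Two smaller points: your equicontinuity in $p$ should be stated on $[0,R]$ (the uniform H\"older-$1/2$ bound $|\tilde u_n(t,p)-\tilde u_n(t,q)|\lesssim|p-q|^{1/2}$ holds for all $p,q>0$, not only away from $0$), since the $\q E$ norm requires control of $\tilde u$ up to $0^+$; and your remark that uniqueness is only forward (on $[1,\infty)$) is accurate and consistent with the rest of the paper.
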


\begin{proof}
	\emph{Step 1. Approximate solutions and a priori bounds.} For each $n\in \mathbb{N}$, define $u_n$ as the unique solution of \eqref{pimkdv}. By Proposition \ref{prop:boundapprox}, for $n$ large enough, $u_n$ is defined on $[T,+\infty)$ and
\[
	\| u_n \|_{\q E([T,+\infty))}\le C\delta.
\]
	
\bigskip
	
	\emph{Step 2. Convergence on the profile space.} Since the sequence $(\tilde{u}_n)_{n\in\mathbb{N}}$ is uniformly bounded in $\q C([T,\infty), \dot{H}^1((0,+\infty))\cap L^\infty(\m R))$,
 the Sobolev embedding implies that
\[
	|\tilde{u}_n(t,p)-\tilde{u}_n(t,q)|\lesssim |p-q|^{1/2},\quad p,q>0.
\]
	Moreover, by Lemma \ref{lem:desenvassimptaprox}, we have $\tilde{u}_n$ uniformly bounded in $  W^{1,\infty}([T,\infty),L^\infty(\m R)).$ Hence $(\tilde{u}_n)_{n\in\mathbb{N}}$ is equicontinuous on $[T,\infty)\times [0,R]$, for any $R>0$. By Ascoli-Arzelà theorem, we conclude that there exists $\tilde{u}\in C_b([T,\infty)\times (0,+\infty))$ such that, up to a subsequence,
\[
	\tilde{u}_N \to \tilde{u}\quad \text{uniformly in }[T,T']\times [0,R], \ T'>T,\ R>0.
\]
	Given $p<0$, we set
\[
	\tilde{u}(t,p)=\overline{\tilde{u}(t,-p)}.
\]
	
	\emph{Step 3. $\tilde{u}\in C_b([T,\infty)\times \m R)\cap L^\infty((T,\infty), \dot{H}^1((0,+\infty)))$}. Define
\begin{align*}
	E_u(t,p) & = \exp\left(-i\epsilon\int_1^t\frac{p^3}{4\pi \jap{p^3s}}|\tilde{u}(s,p)|^2ds\right), \\
	 E^n(t,p) & = \exp\left(-i\epsilon\int_1^t\frac{\chi_n(p)p^3}{4\pi \jap{p^3s}}|\tilde{u}_n(s,p)|^2ds\right).
\end{align*}
	By Lemma \ref{lem:desenvassimptaprox},
\[
	|\tilde{u}_nE^n(t.p)-\tilde{u}_nE^n(s,p)|\lesssim \frac{|t-s|}{\jap{p^3}}.
\]
	Taking $n\to\infty$, we get
\[
	|\tilde{u}E_{u}(t.p)-\tilde{u}E_{u}(s,p)|\lesssim \frac{|t-s|}{\jap{p^3}},
\]
	which means that $\tilde{u}E_{u}\in \q C([T,\infty), L^\infty(\m R))$. On the other hand,
\[
	|E_u(t,p)-E_u(s,p)|\lesssim \int_s^t \frac{p^3}{ \jap{p^3s}}|\tilde{u}(s,p)|^2ds \lesssim |t-s|.
\]
	Hence, when $t\to s$,
\[
	\|\tilde{u}(t)-\tilde{u}(s)\|_{L^\infty(\m R)}\le \|\tilde{u}(t)E_u(t)-\tilde{u}(s)E_u(s)\|_{L^\infty(\m R)} + \|\tilde{u}(s)\left(E_u(t)-E_u(s) \right)\|_{L^\infty(\m R)} \to 0
\]
	and so $\tilde{u}\in \q C([T,\infty), L^\infty(\m R))$.
	
\bigskip
	
	Fix $t\in [T,\infty)$. Since $(\tilde{u}_n(t))_{n\in\mathbb{N}}$ is bounded in $\dot{H}^1((0,+\infty))$, up to a subsequence, there exists $g(t)\in L^2((0,+\infty))$ such that
\[
	\partial_p\tilde{u}_n(t)\rightharpoonup g(t),\quad \|g(t)\|_{L^2((0,+\infty))}\le \liminf \|\partial_p\tilde{u}_n(t)\|_{L^2((0,+\infty))} \lesssim \delta.
\]
	Since $\tilde{u}_n(t)\to \tilde{u}(t)$ in $L^\infty_{loc}(\m R)$, we have $\tilde{u}(t)\in \dot{H}^1((0,+\infty))$ and $\partial_p\tilde{u}(t)=g(t)$. Moreover, the uniform bound on $g(t)$ implies
\[
	\tilde{u}\in C_b([T,\infty) \times \m R)\cap L^\infty((T,\infty), \dot{H}^1((0,+\infty))).
\]

	\emph{Step 4. Convergence on the physical space.}
	We already now that
\[
	|\tilde{u}_n(t,p)-\tilde{u}_n(t,0^+)|, |\tilde{u}(t,p)-\tilde{u}(t,0^+)|\lesssim |p|^{1/2}.
\]
	Hence, $\tilde{u}_n(t)\to \tilde{u}(t)$ in $\mathcal{S}'(\m R)$. Therefore  $u(t)=(e^{itp^3}\tilde{u})^{\vee}(t)$ is well-defined and we have
\[
	u_n\to u \text{ in }\mathcal{D}'((T,\infty)\times \m R),
\]
	
	We now claim that $u$ is in $L^\infty((T,\infty)\times\m R)$ and that $u_n(t)\to u(t)$ in $L^\infty(\m R)$, for any $t\in [T,\infty)$. Since $(u_n)_{n\in\mathbb{N}}$ is
	bounded in $\q E([T,\infty))$, Lemma \ref{lema:decayE} implies that, for any $K\subset \m R $ compact and $T'>T$,
\begin{gather*}
	\forall t \in [T,T'], \quad \|u_n(t)\|_{L^\infty(K)}, \|(u_n)_x(t)\|_{L^\infty(K)} \lesssim_{T',K} \delta, \\
\text{and} \quad \forall x\in \m R, \ \forall t \in [T,T'], \quad	|u_n(t,x)|\lesssim C(T') \jap{x}^{-\frac{1}{4}}.
\end{gather*}
	Again by Ascoli-Arzelà, there exists $h(t)\in \q C(\m R)$ such that
\[
	u_{n_k}(t)\to h(t) \quad \text{uniformly in } K, \ K\subset \m R\text{ compact.}
\]
	and
\[
	|h(t,x)|\lesssim C(T') \jap{x}^{-\frac{1}{4}},\quad x\in\m R.
\]
	This implies that $h$ is, in fact, bounded over $(T,T')\times \m R$. Since $u_n\to u$ in the distribution sense, $h=u$. Hence the limit $h(t)$ is unique and we conclude that the whole sequence $(u_n(t))_{n\in\mathbb{N}}$ must converge to $h(t)$:
\[
	u_n(t)\to h(t)=u(t) \quad \text{uniformly in } K, \ K\subset \m R\text{ compact.}
\]
	Finally, the uniform decay of $u_n$ and $u$ imply that this convergence holds over $\m R$,
\[
	u_n(t)\to u(t) \text{ in }L^\infty(\m R).
\]
	The claim is proven.
	
\bigskip

	\emph{Step 5. $u$ is a solution of \eqref{mkdv} in $\q E([T,+\infty))$}. Since $u_n(t)\to u(t) \text{ in }L^\infty(\m R)$, one has  
\[ (u_n)^3\to u^3 \text{ in } \mathcal{D}'((T, \infty)\times\m R). \]
	Recalling that $(\partial_t + \partial_{xxx})u_n=-\epsilon\Pi_n((u_n)^3)_x$, one may now pass to the limit in the distributional sense and
\[
	(\partial_t + \partial_{xxx})u=-\epsilon(u^3)_x.
\]
	By Step 3, $u\in \q E([T,+\infty))$ and the bound \eqref{eq:boundfinal} follows from the corresponding bound for $u_n$. The proof is complete.
\end{proof}

\begin{nb}\label{rmk:boundI}
	As a consequence of the above proof and  Lemma \ref{lem:H1aprox}, one may easily see that, if $\|u_1\|_{\q E(1)}<\delta<\delta_0$, then
		\begin{align}
\forall t \ge 1, \quad	\|\widehat{\mathcal{I}u}(t)\|_{L^2((0,+\infty))} & \le \|\widehat{\mathcal{I}u}(1)\|_{L^2((0,+\infty))} t^{\kappa \delta^2},\quad \text{and} \\
\label{eq:estIparatras}
\forall t \le 1, \quad	\|\widehat{\mathcal{I}u}(t)\|_{L^2((0,+\infty))} & \le \|\widehat{\mathcal{I}u}(1)\|_{L^2((0,+\infty))} t^{- \kappa\delta^2}.
\end{align}
\end{nb}

We now consider the large data case. Here, the only delicate point is to prove that the lifespan $[T_{-,n},T_{+,n}]$ does not become trivial as $n$ tends to $\infty$. Afterwards, the arguments of the previous proof may be applied \textit{mutatis mutandis}.

\begin{lem}[Uniform local existence for large data]\label{lem:uniformtime}
	Given $u_1\in \q E(1)$, there exists $T_-(u_1)<1$, $T_+(u_1)>1$, $C = C(\| u_1 \|_{\q E(1)}$ such that, for large $n$, the corresponding solution $u_n$ of \eqref{pimkdv} is defined on $[T_-(u_1), T_+(u_1)]$ and
\[
	\|u_n\|_{\q E([T_-(u_1), T_+(u_1)])}\le C \|u_1\|_{\q E(1)}.
\]
\end{lem}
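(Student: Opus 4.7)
The strategy parallels that of Proposition~\ref{prop:boundapprox}: a continuity/bootstrap argument on
\[
f_n(t) := \|\tilde u_n(t)\chi_n^{-1}\|_{L^\infty} + t^{-1/6}\|\partial_p \tilde u_n(t)\chi_n^{-1/2}\|_{L^2((0,+\infty))},
\]
built on the a priori estimates of Lemmas~\ref{lem:desenvassimptaprox} and~\ref{lem:H1aprox}. The smallness of $\|u_1\|_{\q E(1)}$ used there to absorb the cubic nonlinear feedback is traded here for shortness of the time window $[T_-,T_+]$, whose length will be chosen in terms of $K := \|u_1\|_{\q E(1)}$.

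Concretely, I would fix $M = M(K)$ a sufficiently large polynomial in $K$ (for instance $M := C_0(K+K^3+K^5)$ with $C_0$ a large universal constant), and let $J_n$ denote the maximal connected interval containing $1$ on which $f_n \le M$; by Proposition~\ref{prop:u_n}, $J_n$ is a nonempty open neighborhood of $1$. On $J_n \cap [1, T_+]$, Lemma~\ref{lem:desenvassimptaprox} gives $\|\tilde u_n(t)\chi_n^{-1}\|_{L^\infty} \le K + C(M^3+M^5)$, while Lemma~\ref{lem:H1aprox} combined with the identity $\partial_p \tilde u_n = -ie^{itp^3}\widehat{\mathcal I u_n} - 3it\epsilon\chi_n e^{-itp^3}\widehat{u_n^3}$ and the $L^6$-decay $\|u_n(t)\|_{L^6}^3\lesssim t^{-5/6}M^3$ from Lemma~\ref{lema:decayE} yields
\[
t^{-1/6}\|\partial_p\tilde u_n(t)\chi_n^{-1/2}\|_{L^2} \le \|\widehat{\mathcal I u_n}(1)\chi_n^{-1/2}\|_{L^2}\, t^{\kappa M^2-1/6} + o_n(1) M^3 + C M^3.
\]
The initial value $\|\widehat{\mathcal I u_n}(1)\chi_n^{-1/2}\|_{L^2}$ is controlled by $K+3K^3$ via the same identity at $t=1$, together with $\|\partial_p\tilde u_n(1)\chi_n^{-1/2}\|_{L^2}\le K$ and $\|u_n(1)\|_{L^6}^3\lesssim K^3$.

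I would then pick $T_+>1$ close enough to $1$ so that $t^{\kappa M^2 - 1/6}\le 2$ on $[1,T_+]$, which amounts to $|\ln T_+| \le (2\kappa M^2)^{-1}$ and fixes $T_+$ as an explicit function of $K$. For $n$ large enough that $o_n(1)M^3\le K$, collecting the inequalities above leads to $f_n(t) \le M$ on $[1,T_+]$, provided $C_0$ has been taken large enough; this closes the bootstrap, and the blow-up alternative from Proposition~\ref{prop:u_n} then ensures $[1,T_+]\subset(T_{-,n},T_{+,n})$. The backward interval $[T_-,1]$ is treated symmetrically via the time-reversed estimate of Lemma~\ref{lem:H1aprox}.

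The hard part, compared with the small-data case of Proposition~\ref{prop:boundapprox}, is the genuinely cubic contribution $3t\|\chi_n^{1/2}\widehat{u_n^3}(t)\|_{L^2}\lesssim t^{1/6}M^3$ arising from the identity for $\partial_p\tilde u_n$: it does not shrink with $|t-1|$ and so prevents taking $M$ of the same order as $K$ when $K$ is large. Allowing $M=M(K)$ to be a high enough polynomial in $K$ absorbs this obstruction at the cost of enlarging the bootstrap constant and shrinking the lifespan $T_\pm - 1$ accordingly; verifying that these choices can all be made consistently (so that in particular the $C M^3+CM^5$ feedback fits strictly inside the bootstrap window $M$) is the delicate accounting step of the proof.
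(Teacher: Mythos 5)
Your bootstrap does not close, and the ``delicate accounting step'' you defer at the end is in fact impossible as set up. Write $K=\|u_1\|_{\q E(1)}$. The feedback terms you must absorb are $C(M^3+M^5)$ from Lemma \ref{lem:desenvassimptaprox} and, more importantly, the conversion term $3t\|\chi_n^{1/2}\widehat{u_n^3}(t)\|_{L^2}\lesssim t^{1/6}f_n(t)^3$ coming from the identity $\partial_p\tilde u_n=-ie^{itp^3}\widehat{\mathcal I u_n}+3t\chi_n e^{-itp^3}\widehat{u_n^3}$; none of these carries a factor $|t-1|$ or $o_n(1)$, so shrinking $T_\pm-1$ or taking $n$ large does not reduce them. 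Your closure condition is therefore of the form $K+CK^3+C(M^3+M^5)\le M$, which forces $M\lesssim 1$ and hence $K\lesssim 1$ --- exactly the small-data regime of Proposition \ref{prop:boundapprox}. Taking $M$ to be a higher polynomial in $K$ makes the cubic and quintic feedback larger, not smaller, so no choice of $M(K)$ and $T_\pm(K)$ makes the inequality self-improving once $K$ is of order one. The paper states the underlying obstruction explicitly: because $\q E$ is critical, no uniform time-continuity estimate for $u_n$ is available, and that is precisely what your ``trade smallness of the data for shortness of the time window'' strategy would need.

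The paper's proof replaces the bootstrap by a compactness-and-contradiction argument at the first exit time: let $t_n>1$ be the first time with $\|u_n(t_n)\|_{\q E(t_n)}=C_1\|u_1\|_{\q E(1)}$ and suppose $t_n\to 1$. The uniform bound on $[1,t_n]$, Lemma \ref{lem:desenvassimptaprox} (whose increment is $O\bigl((C_1K)^5|t_n-1|\bigr)\to 0$) and the decay estimates of Lemma \ref{lema:decayE} give $\tilde u_n(t_n)\to\tilde u_1$ and $u_n(t_n)\to u_1$ in $L^6(\m R)$, so the cubic term $3t_n\|\widehat{u_n^3}(t_n)\|_{L^2}$ is asymptotically of size $\|u_1\|_{L^6}^3\lesssim K^3$, i.e.\ controlled by the data rather than by the ceiling $(C_1K)^3$. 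Combined with Lemma \ref{lem:H1aprox} (where $t_n^{\kappa C_1^2K^2}\le 2$ for large $n$, as in your argument), this yields $\|u_n(t_n)\|_{\q E(t_n)}\le C_0K(1+K^2)$ with $C_0$ \emph{independent of} $C_1$, and choosing $C_1=2C_0(1+K^2)$ gives the contradiction. This evaluation-at-the-exit-time compactness step is the missing idea in your proposal; without it, or some substitute that bounds the cubic terms by $K^3$ instead of $M^3$, the direct continuity argument cannot be completed.
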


\begin{proof} 
Due to the critical nature of the space $\q E$, we are unable to obtain a uniform time-continuity estimate for the solutions $u_n$. Instead, we argue by contradiction. We focus on showing $T_+(u_1)>1$, the other case being completely analogous.

Let $C_1>0$ be a large constant to be chosen later. For each $n$, let $t_n>1$ be the first time satisfying
\[
	\| u_n(t_n) \|_{\q E(t_n)}= C_1 \|u_1\|_{\q E(1)}.
\]
	Suppose, for the sake of contradiction, that $t_n\to 1$, in particular $t_n \le 2$. The uniform bound of $\tilde{u}_n(t_n)$ in $L^\infty\cap \dot{H}^1((0,+\infty))$ implies the existence of $v$ such that
\[
	\partial_{p}\tilde{u}_n(t_n)\rightharpoonup \partial_{p}v \mbox{ in }L^2(0,+\infty), \quad \tilde{u}_n(t_n)\to v\mbox{ in }L^\infty(K), K\subset \m R\mbox{ compact}
\]
	On the other hand, by Lemma \ref{lem:desenvassimptaprox}, 
\begin{align}
	|(\tilde{u}_nE^n)(t_n,p) - \tilde{u}_1(p)|&=|(\tilde{u}_nE^n)(t_n,p) - (\tilde{u}_nE^n)(1,p)|+|\tilde{u}_n(1,p) - \tilde{u}_1(p)| \nonumber\\
	&\lesssim (\| u_n(t_n) \|_{\q E(t_n)}^3 + \| u_n(t_n) \|_{\q E(t_n)}^5) \|  |t_n-1|+|\Pi_n(p) - 1| \to 0\label{eq:contradLinfty}
\end{align}
which means that $v=\tilde{u}_1$. Moreover, the decay estimates of Lemma \ref{lema:decayE} imply that 
\[
u_n(t_n)\to u_1\text{ in } L^6(\m R).
\]
Due to  \eqref{eq:H1aprox_tmaior} from Lemma \ref{lem:H1aprox}, we have
\begin{align*}
\|(\mathcal{I}u_n)(t_n)\|_{L^2((0,+\infty))}& \lesssim \left(\int_0^\infty |\widehat{\mathcal{I}u_n}(1)|^2\chi_n^{-1}dp\right)^{1/2} t_n^{C_1^2 \kappa \| u_1 \|_{\q E(1)}^2} +  o_n(1) t_n^{1/6}  \| u_n(1) \|_{\q E(1)}.  \\ 
\end{align*}
For $n$ large,  $t_n^{C_1^2 \kappa \| u_1 \|_{\q E(1)}^2} \le 2$. Using once more the formula
\begin{align} \label{eq:Iu_n}
	\partial_p\tilde{u}_n  =  -ie^{itp^3}\widehat{\mathcal{I}u_n} + 3t \chi_n e^{-itp^3}\widehat{u^3_n},
	\end{align}
we get that for large $n$,
\begin{align*} 
\| \mathcal{I}u_n(1) \chi_n^{-1/2} \|_{L^2((0,+\infty))} & \lesssim \| \partial_p\tilde{u}_n(1) \chi_n^{-1/2} \|_{L^2((0,+\infty))} + \| \widehat{ u_n^3(1)} \chi_n^{1/2} \|_{L^2((0,+\infty))} \\
& \lesssim \| u_1 \|_{\q E(1)} + \| u_1 \|_{L^6}^3  \lesssim \| u_1 \|_{\q E(1)} (1 + \| u_1 \|_{\q E(1)}^2).
\end{align*}
As a consequence, we get 
\[ \|(\mathcal{I}u_n)(t_n)\|_{L^2((0,+\infty))} \lesssim \| u_1 \|_{\q E(1)} (1 + \| u_1 \|_{\q E(1)}^2). \]
In the above estimate, we emphasize that the implied constant does not depend on $C_1$.
Using \eqref{eq:Iu_n} yet another time gives 
\begin{align*} 
\| \partial_p\tilde{u}_n (t_n) \|_{L^2((0,+\infty))} & \le \|(\mathcal{I}u_n)(t_n)\|_{L^2((0,+\infty))} + 3t_n \|  \widehat{ u_n^3(1)} \|_{L^2((0,+\infty))} \\
& \lesssim \| u_1 \|_{\q E(1)} (1 + \| u_1 \|_{\q E(1)}^2).
\end{align*}
Moreover, it follows from \eqref{eq:contradLinfty} that $\| \tilde u_n(t_n) \|_{L^\infty} \le 2 \| u_1 \|_{\q E(1)}$ for large $n$.  In other words, for some absolute constant $C_0$ and large $n$, 
\[ \| u_n(t_n) \|_{\q E(t_n)} \le C_0  \| u_1 \|_{\q E(1)} (1 + \| u_1 \|_{\q E(1)}^2). \]
Choose now $C_1 = 2 C_0 (1 + \| u_1 \|_{\q E(1)}^2)$: this is a contradiction. Hence $t_n \not\to 1$, and the proof is complete.
\end{proof}

We can now follow the same arguments as for the proof of Proposition \ref{prop:exist}, and get the analoguous result for large data given below.

\begin{prop}[Existence for large data]\label{prop:exist_u_E}
Let $u_1\in \q E(1)$. There exist $T_{-}(u_1)<1$ and $T_+(u_1) >1$ and a solution $u\in \q{E}([T_-(u_1),T_+(u_1)])$ solution of \eqref{mkdv} in the distributional sense such that $u(1)=u_1$.
\end{prop}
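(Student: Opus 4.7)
The plan is to mimic Steps 1--5 of the proof of Proposition \ref{prop:exist}, using Lemma \ref{lem:uniformtime} as a replacement for the global a priori bound of Proposition \ref{prop:boundapprox}. First, for each $n$ large enough, let $u_n$ be the solution of \eqref{pimkdv} with initial datum $\Pi_n u_1$ at time $t=1$. Lemma \ref{lem:uniformtime} supplies times $T_-(u_1) < 1 < T_+(u_1)$ and a constant $C = C(\|u_1\|_{\q E(1)})$ such that
\[ \|u_n\|_{\q E([T_-(u_1), T_+(u_1)])} \le C \|u_1\|_{\q E(1)} \]
for all sufficiently large $n$. This is the only place where the large-data analysis departs from the small-data one: the bound is no longer global in time, and it costs a constant that depends on $\|u_1\|_{\q E(1)}$.

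Next I would extract a limit on the profile side exactly as in Step 2 of Proposition \ref{prop:exist}. By the uniform bound in $L^\infty_t(\dot H^1_p((0,+\infty)) \cap L^\infty_p)$, each $\tilde u_n(t)$ is $\frac{1}{2}$-Hölder in $p$ with a uniform constant, and Lemma \ref{lem:desenvassimptaprox} yields uniform Lipschitz continuity of $\tilde u_n E^n$ in $t$, locally uniformly in $p$. Applying Ascoli-Arzelà on $[T_-, T_+] \times [0, R]$ for each $R>0$ and diagonalizing yields a subsequence (still denoted $\tilde u_n$) such that $\tilde u_n \to \tilde u$ locally uniformly. Extending $\tilde u$ to $p<0$ by conjugate symmetry and passing to the limit in the Lipschitz bound on $\tilde u_n E^n$ produces $\tilde u E_u \in \q C([T_-, T_+], L^\infty(\m R))$; combined with the continuity of $E_u$ in $t$ (direct from its definition), this gives $\tilde u \in \q C([T_-, T_+], L^\infty(\m R))$. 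For the $\dot H^1$ statement on $(0,+\infty)$, for each fixed $t$ extract a weak limit of $\partial_p \tilde u_n(t)$ in $L^2((0,+\infty))$; lower semicontinuity of the norm together with locally uniform convergence identifies this limit with $\partial_p \tilde u(t)$, and yields $\tilde u \in L^\infty((T_-, T_+), \dot H^1((0,+\infty)))$.

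To pass to the limit in the physical space, I would invoke the decay estimates of Lemma \ref{lema:decayE}: the $\q E$-bound gives $|u_n(t,x)| \lesssim \jap{x}^{-1/4}$ uniformly in $n,t$ on compacts, together with $\q C^1_x$ equicontinuity on compacts in $x$. A second Ascoli-Arzelà argument, plus the fact that $\tilde u_n \to \tilde u$ in $\q S'$ forces any uniform-on-compacts limit to coincide with $u := (e^{itp^3}\tilde u)^{\vee}$, and so $u_n(t) \to u(t)$ in $L^\infty(\m R)$ for each $t \in [T_-, T_+]$. In particular $u_n^3 \to u^3$ in $\q D'((T_-, T_+) \times \m R)$. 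Since $\Pi_n \to \mathrm{Id}$ on $\q S'$, passing to the limit in $(\partial_t + \partial_{xxx})u_n = -\epsilon \Pi_n (u_n^3)_x$ shows that $u$ solves \eqref{mkdv} in the distributional sense. Combined with Step~3, this gives $u \in \q E([T_-(u_1), T_+(u_1)])$ with $u(1) = u_1$, as desired.

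The main obstacle---already handled by Lemma \ref{lem:uniformtime}---is precisely that the critical nature of $\q E$ does not give a priori time-continuity estimates that are uniform in $n$, so one could not hope to close a contraction-type argument; hence the contradiction argument of Lemma \ref{lem:uniformtime} is essential to keep $T_\pm(u_1)$ bounded away from $1$ uniformly in $n$. Once that uniform interval is secured, the remaining compactness arguments are standard and proceed verbatim as in the small-data case.
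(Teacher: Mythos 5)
Your proposal is correct and follows essentially the same route as the paper, which itself proves this proposition by citing Lemma \ref{lem:uniformtime} for the uniform-in-$n$ existence interval and bound, and then repeating Steps 2--5 of the proof of Proposition \ref{prop:exist} verbatim. Your expansion of those compactness steps is faithful and accurate.
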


We now turn to the forward uniqueness result. It relies on completely different arguments, related to a monotonicity  formula.

\begin{prop}[Forward uniqueness]\label{prop:uniq}
	If $u,v\in \q E([t_1,t_2])$ are two solutions of \eqref{mkdv} and $u(t_1)=v(t_1)$, then $u\equiv v$.
\end{prop}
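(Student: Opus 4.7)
The plan is to implement the Kato-type monotonicity argument announced in the introduction. Setting $w := u - v$, the difference solves
\[ \partial_t w + \partial_{xxx} w + \epsilon \partial_x(Q w) = 0, \quad Q := u^2 + u v + v^2, \]
with $w(t_1) = 0$. Fix a smooth non-decreasing cutoff $\varphi: \m R \to [0, 1]$ with $\varphi = 0$ on $(-\infty, -1]$ and $\varphi = 1$ on $[0, +\infty)$, and for $\lambda \in \m R$ set $\varphi_\lambda(x) := \varphi(x - \lambda)$. The object of study is the localized $L^2$ quantity
\[ F_\lambda(t) := \int_{\m R} w^2(t, x) \varphi_\lambda(x) \, dx. \]
Since $\supp \varphi_\lambda \subset [\lambda - 1, +\infty)$, the improved right-decay \eqref{eq:est3lem1} ($|u|, |v| \lesssim t^{-1/3} \jap{x/t^{1/3}}^{-3/4}$ for $x > t^{1/3}$) makes $w^2 \varphi_\lambda$ integrable for every $\lambda$; moreover $F_\lambda(t_1) = 0$.

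Multiplying the equation for $w$ by $w \varphi_\lambda$ and integrating by parts formally yields
\begin{align*}
\tfrac{1}{2} F_\lambda'(t) = {} & -\tfrac{3}{2} \int (\partial_x w)^2 \varphi_\lambda' \, dx + \tfrac{1}{2} \int w^2 \varphi_\lambda''' \, dx \\
& -\tfrac{\epsilon}{2} \int (\partial_x Q)\, w^2 \varphi_\lambda \, dx + \tfrac{\epsilon}{2} \int Q\, w^2 \varphi_\lambda' \, dx.
\end{align*}
The first term on the right is the ``bad'' one mentioned in the outline: we have no a priori $L^2$ control on $\partial_x w$ at the $\q E$ regularity, so it cannot be bounded in absolute value. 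But since $\varphi_\lambda' \ge 0$ its sign is $\le 0$, and working \emph{forward} in time one simply discards it, yielding an upper bound on $F_\lambda'$. The sign is unfavorable running backward, which is exactly the reason for the one-sided conclusion.

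For the other terms use Lemma \ref{lema:decayE}: \eqref{eq:est1lem1} gives $|Q| \lesssim t^{-2/3}$, while the combination of \eqref{eq:est1lem1} and \eqref{eq:est2lem1} yields $|\partial_x Q| \lesssim t^{-1}$ pointwise. The supports of $\varphi_\lambda'$ and $\varphi_\lambda'''$ both lie in $[\lambda - 1, \lambda]$, on which $\varphi_{\lambda - 2} \equiv 1$, hence $\int_{\lambda - 1}^\lambda w^2 \, dx \le F_{\lambda - 2}(t)$. This produces the coupled differential inequality
\[ F_\lambda'(t) \le C t^{-1} F_\lambda(t) + C t^{-2/3} F_{\lambda - 2}(t), \quad t \in [t_1, t_2], \]
where $C$ depends only on $\|u\|_{\q E([t_1, t_2])}$ and $\|v\|_{\q E([t_1, t_2])}$. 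Absorbing the $F_\lambda$ term by Gronwall gives $F_\lambda(t) \le C' \int_{t_1}^t F_{\lambda - 2}(s) \, ds$; iterating $n$ times and using the crude bound $F_\mu(t) \lesssim \jap{\mu}^{1/2}$ (again from \eqref{eq:est1lem1}) produces
\[ F_\lambda(t) \le \frac{(C'(t_2 - t_1))^n}{n!} \sup_{s \in [t_1, t_2]} F_{\lambda - 2n}(s) \lesssim \frac{(C'(t_2 - t_1))^n\, n^{1/2}}{n!}, \]
which vanishes as $n \to +\infty$. Hence $F_\lambda(t) = 0$ for every $\lambda \in \m R$, and since every $x_0 \in \m R$ lies in $\{\varphi_{x_0 - 1} > 0\}$, it follows that $w \equiv 0$ a.e., i.e., $u \equiv v$ on $[t_1, t_2] \times \m R$.

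The main technical obstacle will be the rigorous derivation of the differential identity: at the $\q E$ regularity $w$ is too rough for the integrations by parts to be literal. The natural remedy is to regularize $u$ and $v$ via the $\Pi_n$-scheme of Section \ref{sec:4}, write the identity for the smooth approximants (for which the computations are legal), and pass to the limit using the uniform $\q E$-bounds and the convergence results built in Sections \ref{sec:4}--\ref{sec:5}. A secondary point is careful tracking of the $\lambda$-dependence in the iterated Gronwall bound to ensure that the factorial genuinely dominates the (at-most-polynomial) growth of $F_{\lambda - 2n}$.
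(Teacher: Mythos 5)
Your core argument is correct and rests on the same Kato-type monotonicity as the paper's proof: a non-decreasing cutoff, the observation that $-\tfrac{3}{2}\int w_x^2\varphi_\lambda'\,dx\le 0$ can be discarded when working forward in time, the pointwise bounds $|Q|\lesssim t^{-2/3}$ and $|\partial_x Q|\lesssim t^{-1}$ from Lemma \ref{lema:decayE}, and Gronwall. Where you genuinely diverge is in how the localization error is killed. The paper takes a single sequence of cutoffs $\phi_n(x)=\phi_0(1+x/n)$ whose transition region widens, so that $\|(\phi_n)_{xx}\|_{L^1}=O(1/n)$; the error term in the Gronwall bound then vanishes in one limit $n\to\infty$. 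You instead translate a fixed cutoff and derive a coupled family of inequalities $F_\lambda'\le Ct^{-1}F_\lambda+Ct^{-2/3}F_{\lambda-2}$, closed by an iterated Gronwall argument in the translation parameter together with the polynomial a priori bound $F_\mu\lesssim\jap{\mu}^{1/2}$ and the factorial gain. Both mechanisms work; yours is a little heavier (one must track the $\lambda$-dependence through the iteration, and note that the $\varphi_\lambda'''$ term also feeds into $F_{\lambda-2}$ with a constant rather than a $t^{-2/3}$ prefactor, which is harmless since $t\in[t_1,t_2]$), while the paper's is a one-shot limit at the price of checking that all terms involving $\phi_n'$ and $\phi_n''$ degenerate correctly as the cutoff flattens.

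The one step I would not accept as written is the proposed justification of the energy identity. Regularizing $u$ and $v$ ``via the $\Pi_n$-scheme of Section \ref{sec:4}'' is circular: the approximants of that scheme solve \eqref{pimkdv} with data $\Pi_n u(t_1)$, and the convergence results of Sections \ref{sec:4}--\ref{sec:5} only show that they converge to \emph{some} solution of \eqref{mkdv} with data $u(t_1)$; identifying that limit with the given $u$ is precisely the forward uniqueness you are trying to prove. The paper avoids this by regularizing the \emph{given} difference $w$ directly: it inserts a temporal cutoff $\psi$, convolves $\psi w$ with space-time mollifiers $\rho_\e$, performs the integrations by parts on $w_\e=\rho_\e*(\psi w)$ (for which they are legal), and passes to the limit $\e\to 0$ using the decay and local regularity furnished by Lemma \ref{lema:decayE} (in particular $w\in L^2(\varphi\,dx)$, $\partial_x w\in L^2(\varphi'\,dx)$, and continuity of $t\mapsto\|w^2\varphi\|_{L^1}$). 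Replace your regularization step by this one and the proof is complete.
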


\begin{proof}
	\emph{Step 1}. The difference $w=u-v$ satisfies $(\partial_t+\partial_{xxx})w=((w+v)^3 - v^3)_x$, $w(t_1)=0$. For any $x_1<x_2$, take $\phi \in \q C^\infty(\m R)$ increasing such that $\phi(x)= 0$ for $x<x_1$ and $\phi(x)=1$ for $x>x_2$. A formal computation  yields
	\begin{equation}\label{eq:diferenca}
	\frac{1}{2}\int w^2\phi dx = \int_1^t\int  \left(-\frac{3}{2} w_x^2\phi_x - w_xw\phi_{xx} -\epsilon ((w+v)^3-v^3)_x w\phi\right)  dxds
	\end{equation}
	This can be rigorously justified by a regularization process: for any $\delta>0$, take $\psi\in C_c^\infty(\m R)$ such that $\text{supp } \psi\subset [t_1,t_2]$ and $\psi\equiv 1$ over $[t_1+\delta, t_2-\delta]$. Then $\psi w$ solves
\[
	(\partial_t+\partial_{xxx})(\psi w) = \psi' w -\epsilon \psi \left((w+v)^3 - v^3\right)_x\quad \text{ in }\mathcal{D}'(\m R\times\m R).
\]
	
Taking a sequence of mollifiers (in both space and time) $(\rho_\e)_{\e>0}$, one has
\[
	(\partial_t+\partial_{xxx})(\rho_\e \ast (\psi w)) = \rho_\e \ast ( \psi' w ) - \epsilon \rho_\e \ast \left(\psi ((w+v)^3 - v^3)_x\right), \quad t\in [t_1,t_2].
\] 
	Writing $w_\e = \rho_\e * (\psi w)$, one now multiplies the above equation by $w_\e \phi$ and integrates over $[t_1+\delta, t] \times \m R$:
\begin{align}\label{eq:regularizada}
\MoveEqLeft \int w_\e(t)^2 \phi dx - \int w_\e (t_1+\delta)^2 \phi dx = \int_{t_1+\delta}^t\int  \left(-\frac{3}{2} (w_\e)_x^2\phi_x - (\partial_x w_\e) w_\e \partial_{xx} \phi \right) dxds\\
& \qquad +\int_{1+\delta}^t\int  \rho_\e \ast (\psi' w-\epsilon\psi ((w+v)^3-v^3)_x) w_\e\phi dxds
\end{align}

	Using the decay properties of $w$ and $v$ (cf. Lemma \ref{lema:decayE}), one may show that
\begin{gather*}
	w\in L^2(\phi dx), \quad \partial_x w \in L^6(\phi dx),\quad  \partial_x w\in L^2(\phi_x dx), \\
 w \partial_x w \in \q C(\supp \partial_{xx} \phi),\quad \text{uniformly in }t\in [t_1,t_2].
\end{gather*}
	Furthermore, since $w\in \q C([t_1,t_2], L^\infty_{loc}(\m R))$ and $|w(t,x)|\lesssim C\|w\| x^{-1}$ for $x\ge 1$, it is trivial to check, using the dominated convergence theorem, that $w^2\phi \in \q C([t_1,t_2], L^1(\m R))$.
	These bounds are sufficient to show that, when $\e \to 0$ in \eqref{eq:regularizada}, one obtains 
	\begin{align}\label{eq:diferencaquase}
	\MoveEqLeft \frac{1}{2}\int w^2(t)\phi dx - \frac{1}{2}\int w^2(t_1+\delta)\phi dx \\
	= & \int_{t_1+\delta}^t\int  \left(-\frac{3}{2} w_x^2\phi_x - w_xw\phi_{xx} - \epsilon ((w+v)^3-v^3)_x w\phi\right)  dxds.\nonumber
	\end{align}
	Finally, using once again the continuity of $\|w^2(t)\phi\|_{L^1}$, the limit $\delta\to 0$ yields \eqref{eq:diferenca}. 
	
	\emph{Step 2.} Fix $\phi_0 \in \q C^\infty(\m R)$ increasing with  $\phi(x)=0$ for $x<0$ and $\phi(x)=1$ for $x>1$. Define the sequence $\phi_n(x)=\phi_0(1+x/n)$, which satisfies
\[ \phi_n(x)\to 1 \text{ as } n\to \infty,\quad \|(\phi_n)_{xx}\|_1 = \frac{\|(\phi_0)_{xx}\|_1}{n}. \]

	Applying \eqref{eq:diferenca} to $\phi_n$,
	\begin{align*}
	\frac{1}{2}\int w(t)^2\phi_ndx \le \int_1^t \left(\|w_xw\|_{L^\infty} \|(\phi_n)_{xx}\|_1 -\epsilon\int ((w+v)^3-v^3)_x w\phi_ndx\right) ds.
	\end{align*}
	We simplify the last term:
\begin{gather*}
	\int (w^3)_x w \phi_n = \int 3ww_x w^2\phi_n,\quad \int (vw^2)_x w \phi_n = \int (v_xw + 2w_x v)w^2\phi_n \\
	\int (v^2w)_x w\phi = \int 2 v_xv w^2\phi_n + \frac{1}{2}\int v^2 (w^2)_x \phi = \int  v_x v w^2\phi_n - \frac{1}{2}\int v^2 w^2 (\phi_n)_x.
\end{gather*}
	Recall that, from Lemma \ref{lema:decayE}, given $u,v\in E$, $\|uv_x\|_{L^\infty}\lesssim  \|u\|\|v\|/t$. Hence
	\begin{align*}
	\frac{1}{2}\int w(t)^2\phi_ndx  \lesssim \|w\|^2 \|(\phi_n)_{xx}\|_1\log t + \frac{\|u_1\|^2 + \|v\|^2}{t}\int w^2\phi_n dx.
	\end{align*}
	Applying Gronwall's inequality, for some $C>$ and all $t\in [t_1, t_2]$, there hold
\[
	\int w(t)^2\phi_n dx \lesssim  \|w\|^2 \| \partial_{xx} \phi_n \|_1(\log t) \exp ( 2C(\|u_1\|^2 + \|v\|^2)\log t).
\]
	Taking the limit $n\to \infty$ and using Fatou's lemma,
	\begin{align*}
	\int w^2(t) dx &\le \liminf \int w^2(t)\phi_n \\&\lesssim \liminf  \|w\|^2 \|(\phi_n)_{xx}\|_1(\log t) \exp ( 2C(\|u_1\|^2 + \|v\|^2)\log t)= 0.
	\end{align*}
	Thus $w\equiv 0$ and $u \equiv v$.
\end{proof}

Using the forward uniqueness property, we are able to obtain some further continuity information on the solution $u$ we constructed.

\begin{prop}[Continuity properties on $\partial_p \tilde u$] \label{prop:cont_u}
Let $u \in \q E([T_-(u_1),T_+(u_1)])$ be the solution constructed in Proposition \ref{prop:exist_u_E}.

The map $t \mapsto \partial_p \tilde u(t)$ is continuous in weak-$L^2$ on $[T_-(u_1),T_+(u_1)]$, continuous to the right in $L^2$, and it is continuous at $t=1$. 
\end{prop}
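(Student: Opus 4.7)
The plan is to establish each of the three continuity properties separately, leveraging the approximating construction of Section~\ref{sec:5} together with the forward uniqueness of Proposition~\ref{prop:uniq}.

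The weak-$L^2$ continuity should follow almost immediately from the data we already have: since $\tilde u \in \q C(I, L^\infty_{\loc})$ and $\partial_p \tilde u$ is uniformly bounded in $L^2((0,+\infty))$ on $I$, integrating by parts against any smooth, compactly supported test function $\phi$ on $(0,+\infty)$ gives $\langle \partial_p \tilde u(t), \phi\rangle = -\int \tilde u(t,p)\phi'(p)\,dp$, which is continuous in $t$. A density argument combined with the uniform $L^2$-bound extends this to every $\phi \in L^2$.

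For right-$L^2$ continuity at a time $t_0 \in [T_-(u_1), T_+(u_1))$, the strategy is to apply Proposition~\ref{prop:exist_u_E} with initial time $t_0$ and data $u(t_0) \in \q E(t_0)$ to produce some $v \in \q E([t_0, t_0+\varepsilon])$, then identify $v$ with $u$ on this interval via forward uniqueness. Since $v$ is constructed as a limit of approximate solutions $v_n$ with $v_n(t_0) = \Pi_n u(t_0)$, applying Lemma~\ref{lem:H1aprox} on $[t_0, t_0+\varepsilon]$ should yield a bound of the form $\|\widehat{\mathcal{I}v_n}(t)\|_{L^2(\chi_n^{-1}dp)} \le (t/t_0)^{\kappa \|v_n\|_{\q E}^2}\|\widehat{\mathcal{I}v_n}(t_0)\|_{L^2(\chi_n^{-1}dp)} + o_n(1)$. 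Rewriting $\partial_p \tilde v_n = -ie^{itp^3}\widehat{\mathcal{I}v_n} + 3t\chi_n \widehat{v_n^3}$, in which the cubic correction is continuous in time, and passing $n \to \infty$ using weak-$L^2$ lower semicontinuity of the norm, we should conclude that $\limsup_{t \to t_0^+}\|\partial_p \tilde u(t)\|_{L^2} \le \|\partial_p \tilde u(t_0)\|_{L^2}$. Combined with the reverse inequality from weak-$L^2$ continuity, this gives norm convergence, which upgrades weak convergence to strong $L^2$ convergence.

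Finally, for continuity at $t=1$, right continuity is already handled by the previous step. For left continuity we avoid restarting and work directly with the approximating sequence $u_n$ (with $\tilde u_n(1) = \chi_n \tilde u_1$) that defines $u$, invoking now the backward estimate~\eqref{eq:H1aprox_tmenor} to obtain the analogous bound $\limsup_{t \to 1^-}\|\partial_p \tilde u(t)\|_{L^2} \le \|\partial_p \tilde u(1)\|_{L^2}$, which together with weak continuity gives left continuity. The main obstacle throughout will be the careful passage to the limit in $n$: one must combine the weak convergence $\partial_p \tilde v_n \weak \partial_p \tilde v$ with the control of the cubic term $t\widehat{v_n^3}$ in $L^2$ (through Lemma~\ref{lema:decayE} and the $\q E$-bound) in order to transfer estimates between the weighted objects $\widehat{\mathcal{I}v_n}$ and the unweighted target $\partial_p \tilde u$; a secondary concern will be to check that $\Pi_n u(t_0) \to u(t_0)$ in a mode strong enough that $\|\widehat{\mathcal{I}v_n}(t_0)\|$ does not overshoot $\|\widehat{\mathcal{I}u}(t_0)\|$ in the limit.
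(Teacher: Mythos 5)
Your proposal follows essentially the same route as the paper: weak-$L^2$ continuity from the uniform $\dot H^1((0,+\infty))$ bound together with the $L^\infty$ continuity of $\tilde u$; right continuity at $t_0$ by restarting the solution at $t_0$ via Proposition \ref{prop:exist_u_E} and identifying it with $u$ through forward uniqueness; and continuity at the initial time obtained from the $\widehat{\mathcal{I}u_n}$ bounds of Lemma \ref{lem:H1aprox} (and \eqref{eq:H1aprox_tmenor} for $t<1$) upgraded to strong convergence by combining weak convergence with convergence of norms. One caveat on the write-up: the intermediate claim $\limsup_{t\to t_0^+}\|\partial_p\tilde u(t)\|_{L^2}\le\|\partial_p\tilde u(t_0)\|_{L^2}$ does not follow from the triangle inequality applied to the decomposition $\partial_p\tilde u=-ie^{itp^3}\widehat{\mathcal{I}u}+3t\,e^{-itp^3}\widehat{u^3}$ (that only gives the sum of the two norms as an upper bound); the correct order of operations, which is what the paper does, is to run the weak-convergence-plus-norm-bound argument on $e^{-itp^3}\widehat{\mathcal{I}u}(t)$ alone — its weak limit being identified through the $L^6$ convergence of $u(t)$ — so that this piece converges strongly, and then add the cubic term, which converges strongly in $L^2$ because $u(t)\to u(t_0)$ in $L^6$.
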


\begin{proof}
	\emph{Step 1. Continuity at $t=1$ in $L^2$.} 
	
	Given a sequence $t_k \to 1^+$, since $\tilde{u}\in L^\infty((T,\infty), \dot{H}^1((0,+\infty)))$, one may extract a subsequence so that
\[
	\partial_p\tilde{u}(t_k)\rightharpoonup z, \quad z\in L^2((0,+\infty)).
\]
	The continuity of $t\mapsto \tilde{u}(t)\in L^\infty(\m R)$ implies that $z=\tilde{u}_x(1)$. Since $u\in \q{E}([T,\infty))$, given any compact set $K\subset \m R$, the estimates of Lemma \ref{lema:decayE} imply that $(u(t_k))_{k\in\mathbb{N}}$ is in $W^{1,\infty}(K)$. Hence, by Ascoli-Arzel\'a, up to a subsequence, there exists $v\in \q C(\m R)$ such that
\[
	u(t_k) \to v\quad \text{ in }L^\infty(K),\ K\text{ compact subset of }\m R.
\]
	Moreover, due to the uniform decay of $u(t_k)$, one sees that this convergence is valid over $L^\infty(\m R)$. Since, by Step 3, $\tilde{u}(t_k)\to \tilde{u}(1)$ in $L^\infty(\m R)$, one must have $v=u(1)$. Together with the decay estimate \eqref{eq:est1lem1}, the Dominated Convergence theorem implies that
\[
	\tilde{u}(t_k)\to \tilde{u}(1) \quad \text{in }L^6(\m R).
\]
	Hence
	\begin{align*}
	e^{-it_kp^3}\widehat{\mathcal{I}u}(t_k)&=i\left(\partial_p\tilde{u}(t_k) - \frac{3t_k}{4\pi}e^{it_kp^3}\widehat{u^3}(t_k)\right) \rightharpoonup e^{-ip^3}\widehat{\mathcal{I}u}(1)\quad \text{ in }L^2((0,+\infty)). 
	\end{align*}
	On the other hand, using \eqref{eq:H1aprox_tmaior} from Lemma \ref{lem:H1aprox},
	\begin{align*}
	\|(\mathcal{I}u)(t_k)\|_{L^2((0,+\infty))}&\lesssim \lim_n \left(\int_0^\infty |\widehat{\mathcal{I}u}(t_k)|^2\chi_n^{-1}dp\right)^{1/2}  \\ & \lesssim \lim_n \left(\int_0^\infty |\widehat{\mathcal{I}u}(1)|^2\chi_n^{-1}dp + o_n(1)\delta^3 \right) t_k^{1/6} \\&= \|(\mathcal{I}u)(1)\|_{L^2((0,+\infty))}t_k^{1/6}.
	\end{align*}
	Therefore $e^{-it_kp^3}\widehat{\mathcal{I}u}(t_k)\to e^{-ip^3}\widehat{\mathcal{I}u}(1)$ in $L^2((0,+\infty))$ and
	\begin{equation}
	\partial_p\tilde{u}(t_k) = -ie^{-it_kp^3}\widehat{\mathcal{I}u}(t_k) + \frac{3t_k}{4\pi}e^{it_kp^3}\widehat{u^3}(t_k) \to \partial_p\tilde{u}(1) \quad \text{in } L^2((0,+\infty)).
	\end{equation}
The continuity to the left of $t=1$ follows from the same arguments and \eqref{eq:H1aprox_tmenor}.
	
\bigskip

\emph{Step 2. Continuity to the right in $L^2$.}
	
Now we observe that $\tilde{u}$ is continuous to the right with values in $\dot{H}^1((0,+\infty))$. 
	
Given $t_0\in [T_-(u_1),T_+(u_1))$, Proposition \ref{prop:exist_u_E} show that one may build $v$ satisfying $\tilde{v}\in \q E([t_0-\e,t_0+\e])$, for some $\e >0$, with $v$ a solution to \eqref{mkdv} in $\q D'((t_0-\e, t_0+\e) \times \m R)$ and $v(t_0)=u(t_0)$.
	
Step 1 shows that we can furthermore assume continuity at $t_0$: 
\[ \partial_p \tilde{v}(t) \to \partial_p \tilde{v}(t_0) \quad \text{in  }L^2((0,+\infty)) \quad \text{as} \quad t\to t_0. \]
By forward uniqueness, $v\equiv u$ on $[t_0,t_0+\e]$, which means that $\partial_p \tilde{u}$ is continuous to the right at $t_0$.
	
\bigskip
	
	\emph{Step 3. Continuity in weak-$L^2$.
	} 

Let $(t_n)_{n\in \mathbb{N}}$ be a sequence of times in $[T_-(u_1),T_+(u_1)]$ such that $t_n \to t_*$. We already saw that $\tilde u(t_n) \to \tilde u(t_*)$ in $L^\infty$. Since $\partial_{p}\tilde{u}(t_n)$ in bounded in $L^2((0,+\infty))$, any subsequence admits a sub-subsequence converging in weakly in $L^2$, to a limit which can only be $u(t_*)$. This proves that the full sequence converges: $\partial_p \tilde u(t_n) \weak \partial_p u(t_*)$ weakly in $L^2$.
\end{proof}

\begin{nb}
If backward uniqueness holds, then the same proof shows full continuity: $\partial_p \tilde u \in \q C([T_-(u_1),T_+(u_1)],L^2)$.
\end{nb}

\begin{prop}[Forward uniqueness implies a backward blow-up alternative]\label{prop:blowupaltern}
	Let $u\in \q E((T_-, T_+))$ be a maximal solution of (mKdV). If $T_->0$, then
\[
	\limsup_{t\to T_-} \|u(t)\|_{\q E(t)}=\infty.
\]
\end{prop}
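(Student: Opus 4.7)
The plan is to argue by contradiction, combining the local existence result (Proposition \ref{prop:exist_u_E}), forward uniqueness (Proposition \ref{prop:uniq}), and the scale-invariance of \eqref{mkdv} and of $\| \cdot \|_{\q E(t)}$.

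I would suppose $T_->0$ and $M := \limsup_{t \to T_-}\|u(t)\|_{\q E(t)}<+\infty$, so that $\|u(t)\|_{\q E(t)}\le 2M$ for all $t \in (T_-, T_-+\eta)$ for some $\eta>0$. Fix $t_0 \in (T_-, T_-+\eta)$ (to be chosen later) and rescale to time $1$ by setting $v(s,x) := t_0^{1/3} u(t_0 s, t_0^{1/3} x)$; this is again a solution of \eqref{mkdv} and, by scale-invariance of the norm, $\|v(1)\|_{\q E(1)} = \|u(t_0)\|_{\q E(t_0)} \le 2M$. Applying Proposition \ref{prop:exist_u_E} to $v(1)$ yields a solution $\bar v \in \q E([T^0_-,T^0_+])$ of \eqref{mkdv} with $\bar v(1)=v(1)$ and $T^0_-<1<T^0_+$. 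Un-rescaling gives a solution $\bar u(t,x) := t_0^{-1/3}\bar v(t/t_0, x/t_0^{1/3}) \in \q E([t_0 T^0_-, t_0 T^0_+])$ with $\bar u(t_0)=u(t_0)$; forward uniqueness then forces $\bar u \equiv u$ on $[t_0, T_+) \cap [t_0, t_0 T^0_+]$. Gluing $\bar u$ on $[t_0 T^0_-, t_0]$ with $u$ on $[t_0, T_+)$ defines a distributional solution of \eqref{mkdv} in $\q E((t_0 T^0_-, T_+))$, which contradicts the maximality of $(T_-,T_+)$ as soon as $t_0 T^0_- < T_-$.

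The main obstacle is to ensure this last inequality, which amounts to showing that $T^0_-$ can be taken below some $\sigma_M<1$ depending only on $M$ (and not on the particular $v(1)$): in other words, the backward lifespan in Proposition \ref{prop:exist_u_E} must be norm-controlled. Inspection of the proof of Lemma \ref{lem:uniformtime} shows that the constant $C_1 = 2C_0(1+\|u_1\|_{\q E(1)}^2)$ and every bound entering the contradiction step depend only on $\|u_1\|_{\q E(1)}$; running the same argument uniformly over data $w$ with $\|w\|_{\q E(1)}\le 2M$ therefore produces a lifespan $[\sigma_M, \Sigma_M]$ with $\sigma_M<1$ depending only on $M$. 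Once such a $\sigma_M$ is available, I would select $t_0 \in (T_-, T_-+\eta)$ close enough to $T_-$ that $T_-/t_0 > \sigma_M$; then $t_0 T^0_- \le t_0 \sigma_M < T_-$, and the gluing step above gives the desired contradiction.
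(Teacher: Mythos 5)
Your overall strategy (contradiction + local existence + forward uniqueness to glue a backward extension) is the right one, but the step you yourself identify as ``the main obstacle'' is a genuine gap, and it is exactly the point the paper's proof is designed to avoid. You need the backward lifespan $T^0_-$ of Proposition \ref{prop:exist_u_E} to be bounded away from $1$ uniformly over the ball $\{\|w\|_{\q E(1)}\le 2M\}$, i.e.\ a lifespan depending only on the norm of the datum. The paper does not provide this: Lemma \ref{lem:uniformtime} produces times $T_\pm(u_1)$ depending on the datum $u_1$ itself, via a compactness/contradiction argument along the approximation parameter $n$ \emph{for a fixed} $u_1$ (extraction of weak limits, identification of the limit with $\tilde u_1$), and its proof opens by stating that ``due to the critical nature of the space $\q E$, we are unable to obtain a uniform time-continuity estimate.'' In a scaling-critical space a local existence time depending only on the norm is precisely the kind of statement one cannot expect in general (it would essentially upgrade local to small-data-global by scaling), so ``running the same argument uniformly over the ball'' is not a routine inspection: the ball in $\q E(1)$ is not compact, and the contradiction argument would have to be redone with a varying sequence of data, which is not done in the paper and is not obviously possible. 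Without a genuine $\sigma_M$, choosing $t_0$ close to $T_-$ does not help, since $T^0_-$ could degenerate to $1$ as fast as $t_0$ approaches $T_-$.

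The paper's proof sidesteps this entirely: under the assumption $\sup_{t\in(T_-,1)}\|u(t)\|_{\q E(t)}\le M$, it first shows that $u$ extends to the closed interval, $u\in\q E([T_-,1])$ --- the Lipschitz-in-time bound $\|\partial_t\tilde u(t)\|_{L^\infty}\lesssim t^{-1}\|u(t)\|_{\q E(t)}^3$ from Lemma \ref{lem:desenvassimpt} gives convergence of $\tilde u(t)$ in $L^\infty$ as $t\to T_-^+$ (using $T_->0$), and the uniform $\dot H^1((0,+\infty))$ bound passes to the limit --- so that $u(T_-)\in\q E(T_-)$ is an honest datum. Then a \emph{single} application of local existence at $T_-$ produces $v\in\q E([T_--\e,T_-+\e))$ with $v(T_-)=u(T_-)$, forward uniqueness gives $v\equiv u$ on the overlap, and gluing contradicts maximality. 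No uniformity of the lifespan is needed, only existence of some backward extension from one datum. If you want to salvage your rescaling approach, you would have to either prove the norm-uniform lifespan (a substantial new claim) or, more simply, insert the paper's extension-to-the-endpoint step, at which point the rescaling becomes unnecessary.
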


\begin{proof}
	By contradiction, suppose that
\[
	\forall t \in (T_-,1), \quad \|u(t)\|_{\q E(t)}<M.
\]
	It follows from Step 3, proof of Proposition \ref{prop:exist}, that $\tilde{u}$ is Lipschitz in time with values in $L^\infty(\m R)$. Moreover, since $T_->0$, $\tilde{u}$ is bounded in $\dot{H}^1((0,+\infty))$. This means that $u$ may be extended up to $t=T_-$: $u\in \q E([T_-, 1])$.

Now consider the solution $v\in \q E([T_- - \e, T_- + \e)$ of (mKdV) with $v(T_-)=u(T_-)$. By forward uniqueness, $u\equiv v$ for $t>T_-$. This means that $u$ is not maximal, a contradiction.
\end{proof}

\section{Well-posedness on $L^2(\m R)\cap\q E$} \label{sec:6}

In this section, we prove that Proposition \ref{prop:L2}, that is, once we restrict the critical space to $L^2$-integrable functions, the local well-posedness theory works in either direction of time. We split it into two statements: one for uniqueness and another for persistence of $L^2$ integrability.

\begin{prop}[Backward uniqueness]\label{prop:backuniqL2}
	If $u, v \in \q E([t_1,t_2])$ are two solutions of \eqref{mkdv} with $u, v \in \q C([t_1,t_2], L^2(\m R))$ and $u (t_2)= v(t_2)$, then $u_1\equiv u_2$.
\end{prop}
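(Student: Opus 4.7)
The plan is to carry out a direct $L^2$ energy estimate on the difference $w = u - v$. Because we now assume $u, v \in \q C([t_1,t_2], L^2(\m R))$, one has $w \in \q C([t_1,t_2], L^2(\m R))$ and $\|w(t)\|_{L^2}^2$ is a globally defined, finite quantity. The key observation is that the spatial cutoff $\phi$ used in the proof of Proposition \ref{prop:uniq} can simply be replaced by $\phi \equiv 1$; the term $-\frac{3}{2} \int w_x^2 \phi_x\, dx$, whose non-negativity was precisely what forced the one-sided direction in the forward uniqueness proof, then disappears and the resulting identity becomes symmetric in time.

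Writing $u^3 - v^3 = w(u^2 + uv + v^2)$ and testing $(\partial_t + \partial_{xxx}) w = -\epsilon \partial_x(u^3 - v^3)$ against $w$, a formal integration by parts (the term $\int w\, \partial_{xxx} w\, dx$ vanishes, and the quartic piece $w^3 w_x = \frac{1}{4}(w^4)_x$ integrates to zero) yields
\[ \frac{d}{dt} \|w(t)\|_{L^2}^2 = -2\epsilon \int v_x\, w^2\, (u + 2v)\, dx. \]
Using Lemma \ref{lema:decayE}, namely \eqref{eq:est5lem1} applied to $u$ and $v$ together with the analogous cross-bound $\|u v_x\|_{L^\infty} \lesssim t^{-1}\|u(t)\|_{\q E(t)}\|v(t)\|_{\q E(t)}$ obtained directly from \eqref{eq:est1lem1}--\eqref{eq:est2lem1}, the right-hand side is bounded by $C t^{-1}\bigl(\|u\|_{\q E([t_1,t_2])}^2 + \|v\|_{\q E([t_1,t_2])}^2\bigr)\|w(t)\|_{L^2}^2$. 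A Gronwall argument run backward from $t = t_2$, where $w(t_2) = 0$, then forces $w \equiv 0$ on the whole interval $[t_1,t_2]$.

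The main obstacle is justifying the formal energy identity rigorously. I plan to mirror the regularization procedure used in the proof of Proposition \ref{prop:uniq}: introduce a smooth time cutoff $\psi \in \q C^\infty_c$ with $\psi \equiv 1$ on $[t_1,t_2]$, mollify $\psi w$ in space and time to produce $w_\e := \rho_\e * (\psi w)$, test the equation for $w_\e$ against $w_\e$ itself, and then pass to the limit $\e \to 0$. The convergence $w_\e \to w$ in $\q C([t_1,t_2], L^2(\m R))$ follows from the new hypothesis; the nonlinear contributions converge thanks to $u, v \in L^\infty$ and the pointwise bounds of Lemma \ref{lema:decayE} on $u_x, v_x$; and no boundary terms at spatial infinity survive since $u^2+uv+v^2 \in L^\infty$ while $w \in L^2$. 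Because $\phi \equiv 1$ eliminates every signed cutoff contribution, the limiting identity is a genuine equality rather than an inequality, and it therefore drives Gronwall in both directions of time, giving backward uniqueness.
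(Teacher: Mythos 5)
Your proposal is correct and follows essentially the same route as the paper: a direct $L^2$ energy estimate on $w=u-v$ (now a genuine identity since no spatial cutoff is needed), the bound $\bigl|\tfrac{d}{dt}\|w\|_{L^2}^2\bigr|\lesssim t^{-1}\bigl(\|u\|_{\q E}^2+\|v\|_{\q E}^2\bigr)\|w\|_{L^2}^2$ via Lemma \ref{lema:decayE}, and Gronwall run backward from $t_2$. Your extra care in writing out the cubic difference and in justifying the identity by mollification is consistent with (and slightly more detailed than) the paper's argument.
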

\begin{proof}
	Set $w=u - v$. Then $(\partial_t+\partial_{xxx})w=- \epsilon (u^3 - v^3)_x$. The right-hand side of the equation is in $L^\infty((t_1,t_2),L^2(\m R))$:
\[
	\|(u^3)_x(t)\|_{L^2}\lesssim \|u_x u(t)\|_{L^\infty}\|u(t)\|_{L^2} \lesssim \|u\|_{\q E(t_1,t_2)}\|u\|_{L^\infty((t_1,t_2),L^2)}.
\]
	Applying $w$ to the equation, we see that
	
	\begin{align*}
	\left|\frac{1}{2}\frac{d}{dt}\|w(t)\|_2^2\right|& \lesssim (\|u_x u(t)\|_{L^\infty} + \| v_x v (t)\|_{L^\infty}) \|w(t)\|_{L^2}^2\\&
	\lesssim (\|u\|_{\q E(t_1,t_2)} + \|u\|_{\q E(t_1,t_2)})\|w(t)\|_{L^2}^2.
	\end{align*}
	
	By Gronwall's lemma, we obtain $w\equiv 0$.
\end{proof}

We now prove existence of solutions in   $L^2(\m R)\cap\q E$, which can be translated into a persistence result:

\begin{prop}[Persistence of $L^2$ integrability]\label{prop:persistL2}
	Given $u_1\in \q E(1)\cap L^2(\m R)$,  consider the corresponding solution $u\in \q E(I)$ of \eqref{mkdv} given by Proposition \ref{prop:exist}. Then $u\in \q C(I, L^2(\m R))$.
\end{prop}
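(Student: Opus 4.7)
The idea is to replace the approximating sequence of Section~\ref{sec:4} (which does not preserve the $L^2$-norm exactly) by a sequence of $H^\infty$-smooth solutions of the unmodified equation \eqref{mkdv}, so that classical $L^2$-conservation is available and can be passed to the limit. Given $u_1 \in \q E(1) \cap L^2(\m R)$, pick an even non-negative mollifier $\rho \in \cal S(\m R)$ with $\int \rho = 1$ and $\|\hat\rho\|_{L^\infty} = 1$, and set $u_1^\eta := \rho_\eta \ast u_1$ for $\eta > 0$. Then $u_1^\eta \in H^\infty(\m R)$, $\|u_1^\eta\|_{L^2} \le \|u_1\|_{L^2}$ with $u_1^\eta \to u_1$ in $L^2$, and a direct computation using $|\hat\rho(\eta\cdot)| \le 1$ and $\|\eta \hat\rho'(\eta\cdot)\|_{L^2} = \eta^{1/2} \|\hat\rho'\|_{L^2} \to 0$ yields $\|u_1^\eta\|_{\q E(1)} \le \|u_1\|_{\q E(1)} + o_\eta(1)$. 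Because $\hat\rho(0) = 1$, the jump of $\tilde u_1$ at $p=0$ is preserved.

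By the local well-posedness of \eqref{mkdv} in $H^s$, $s \ge 1/4$, of Kenig-Ponce-Vega \cite{KPV93} together with persistence of regularity, there is a unique maximal smooth solution $u^\eta \in \q C(J^\eta, H^\infty)$ with $u^\eta(1) = u_1^\eta$, along which the $L^2$-norm is conserved: $\|u^\eta(t)\|_{L^2} = \|u_1^\eta\|_{L^2}$. The $H^\infty$-regularity legitimizes verbatim all integrations by parts of Section~\ref{sec:4}; moreover, since no $\Pi_n$-cutoff is present in the equation, the $\chi_n'$-boundary term in the vector-field energy estimate of Lemma~\ref{lem:H1aprox} simply disappears, and the analogue of that lemma gives the cleaner bound
\[
\|\widehat{\mathcal I u^\eta}(t)\|_{L^2((0,+\infty))} \le \|\widehat{\mathcal I u^\eta}(1)\|_{L^2((0,+\infty))}\, t^{\kappa \|u^\eta\|_{\q E([1,t])}^2}.
\]
The bootstrap argument of Proposition~\ref{prop:boundapprox} then yields a uniform $\q E$-bound $\|u^\eta\|_{\q E([T_-,T_+])} \lesssim \|u_1\|_{\q E(1)}$ on an interval $[T_-,T_+]$ depending only on $\|u_1\|_{\q E(1)}$; since $\q E$ controls $L^\infty$, the Sobolev norms of $u^\eta$ cannot blow up on $[T_-,T_+]$, so $J^\eta \supset [T_-,T_+]$.

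The compactness argument of Section~\ref{sec:5} (whose only input was the uniform $\q E$-boundedness) produces a limit $u^* \in \q E([T_-,T_+])$ solving \eqref{mkdv} distributionally with $u^*(1) = u_1$. Forward uniqueness (Proposition~\ref{prop:uniq}) identifies $u^* = u$ on $[1,T_+]$; performing the same construction near any other base time covers all of $I$. The uniform $L^2$-bound $\|u^\eta(t)\|_{L^2} \le \|u_1\|_{L^2}$ together with the pointwise convergence $u^\eta(t,\cdot) \to u(t,\cdot)$ from Step~4 of Proposition~\ref{prop:exist} forces weak-$L^2$ convergence, so $u(t) \in L^2$ with $\|u(t)\|_{L^2} \le \|u_1\|_{L^2}$. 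Applying the same argument centred at an arbitrary $t_0 \in I$ and invoking the time-reversal symmetry $u(t,x) \mapsto -u(-t,-x)$ of \eqref{mkdv} to obtain the reverse inequality, we conclude that $t \mapsto \|u(t)\|_{L^2}$ is constant on $I$. Norm conservation combined with the weak-$L^2$ continuity of $u(t)$ (which follows from $u \in \q C(I, \cal S')$ and the uniform $L^2$-bound) finally upgrades to the required strong continuity $u \in \q C(I, L^2)$.

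\textbf{Main obstacle.} The delicate point is to verify that $\|u^\eta\|_{\q E}$ is genuinely uniform in $\eta$: smooth functions are not dense in $\q E(1)$, so one cannot simply appeal to continuous dependence. The argument must use the specific structure of the mollification (in particular $\hat\rho(0) = 1$, preserving the jump, and the smallness of $\eta \hat\rho'(\eta\cdot)$ in $L^2$) to ensure that $\|u_1^\eta\|_{\q E(1)}$ inflates by only $o_\eta(1)$, so that the bootstrap constants in Proposition~\ref{prop:boundapprox} remain under control as $\eta \to 0$.
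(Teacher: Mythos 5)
There are two genuine gaps. The first is the claim that ``$H^\infty$-regularity legitimizes verbatim all integrations by parts of Section~\ref{sec:4}''. The energy identity of Lemma~\ref{lem:H1aprox} concerns $\int_\varepsilon^\infty|\widehat{\mathcal{I}u}|^2dp$, and $\mathcal{I}u = xu - 3t\,\partial_{xx}u - 3t\epsilon u^3$ involves the spatial weight $x$, not smoothness. Mollification does not improve the spatial decay of $u_1$ (which is only $\jap{x}^{-1/4}$ on the left), so $x u^\eta(t)\notin L^2$; what has to be propagated is exactly the $\q E$-type control $\partial_p\tilde u^\eta(t)\in L^2((0,+\infty))$, and the Kenig--Ponce--Vega $H^s$ theory gives no propagation of this quantity. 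In particular, $H^\infty$ regularity of $u^\eta(t)$ does not even guarantee that $\|\partial_p\tilde u^\eta(t)\|_{L^2((0,+\infty))}$ is finite for $t\neq 1$, let alone that $t\mapsto\int_\varepsilon^\infty|\widehat{\mathcal{I}u^\eta}|^2dp$ is differentiable with the expected derivative. In the paper this qualitative control is not free either: it is built into the fixed point in $X_n$ (pointwise domination $|\tilde u_n|\lesssim\chi_n$ and $\partial_p\tilde u_n\,\chi_n^{-1/2}\in\q C(I,L^2)$), and that is precisely what justifies the manipulations in Lemma~\ref{lem:H1aprox}. To make your scheme rigorous you would have to set up an analogous fixed point in a space encoding both the rapid Fourier decay of $\widehat{u_1^\eta}$ and $\partial_p\tilde u\in L^2((0,+\infty))$ --- essentially redoing Section~\ref{sec:4} with a different taming of high frequencies --- so ``verbatim'' is where the actual work is hidden.

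The second gap concerns the times $t<1$ (recall the statement covers $I=[1/T,T]$, hence times before $1$). Your limit $u^*$ coincides with the solution $u$ of Proposition~\ref{prop:exist} on $[1,T_+]$ by forward uniqueness, but on $[T_-,1)$ the identification $u^*=u$ would require backward uniqueness in $\q E$, which is open at this stage (Proposition~\ref{prop:backuniqL2} applies only once \emph{both} solutions are known to lie in $\q C(\cdot,L^2)$, which is exactly what you are trying to prove for $u$). The appeal to time reversal does not repair this: if $v(s,x)=\pm u(c-s,-x)$, then $\tilde v(s,p)=\pm e^{-icp^3}\,\overline{\tilde u(c-s,p)}$, so $\partial_p\tilde v$ contains a term $3ic\,p^2\,\overline{\tilde u(c-s,p)}$, which is not in $L^2((0,+\infty))$ when $\tilde u$ is merely bounded; the reversed solution therefore leaves $\q E$, and neither forward uniqueness nor your uniform bounds apply to it, so both the ``reverse inequality'' step and the coverage of $t<1$ fail. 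The paper's proof avoids both problems at once: it uses the very sequence $u_n$ of \eqref{pimkdv} that defines $u$, for which the weighted quantity $\int|\tilde u_n|^2\chi_n^{-1}dp$ is \emph{exactly} conserved (this is one purpose of the weight $\chi_n^{-1}$); the resulting uniform $L^2$ bound passes to the limit on all of $I$, on both sides of $t=1$, with no uniqueness input, and conservation and continuity for $u$ then follow by direct integration of the equation together with weak $L^2$-continuity.
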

\begin{proof}

	Consider the approximate solutions $u_n$ of \eqref{pimkdv}. Since $u_n\chi_{n}^{-1}\in \q C(I, L^{\infty}(\m R))$, we have $\tilde{u}_n\chi_{n}^{-1/2} \in \q C(I,L^2(\m R))$.
	It then follows by direct integration that
	\begin{align*}
	\left|\frac{1}{2}\frac{d}{dt}\int |\tilde{u}_n(t,p)|^2\chi_{n}^{-1}dp\right| &= \left|\int \widehat{(u_n^3)_x}(t,p)\hat{u}_n(t,p)dp\right|=  \left|\frac{1}{2}\int (u_n^2)_xu_n^2dx\right|=0
	\end{align*}
	and  so $\|\tilde{u}\chi_n^{-1/2}(t)\|_{L^2(\m R)}^2$ is conserved. 
	In the limit $n\to \infty$, we obtain $u\in L^\infty(I, L^2(\m R))$. We then infer $L^2$-conservation by direct integration of the equation for $u$:
	\[	\|u(t)\|_{L^2}=\|u_1\|_{L^2}. \]
	Together with the weak $L^2$-continuity, we conclude that $u\in \q C(I, L^2(\m R))$.
\end{proof}

\section{Construction of blow-up solutions} \label{sec:7}

\begin{proof}[Proof of Proposition \ref{prop:blowupgivenprofile}]
	
\textit{Step 1. Construction of the approximating sequence.} The scaling invariance of the equation and the criticality of the space $\q E$ imply that Proposition \ref{prop:exist} holds for any initial time $t_0>0$, with $\delta$ independent on $t_0$.

Given $t_n\to 0$, define
\[
g_n(p)=g_0(t_n^{1/3}p).
\]
Since
\[
\|g_n\|_{\q E(t_n)}=\|g_0\|_{\q E(1)}<\delta,
\]
one may build $u_n\in \q E([t_n,+\infty))$ solution of (mKdV) with initial condition $\tilde{u}_n(t_0)=g_n$ and
\[ \|u_n\|_{\q E([t_n, +\infty))}\le C\delta. \]

\textit{Step 2. Convergence.} Proceeding as in Steps 2-5 of the proof of Proposition \ref{prop:exist}, we obtain a solution $u\in \q E((0,+\infty))$ of (mKdV) which is the pointwise limit of $(u_n)_{n\in \m N}$ both in the physical and frequency spaces.

Given $t_0>0$, one may construct a solution $v\in \q E([T_-(v(t_0)),+\infty))$ of \eqref{mkdv} with $v(t_0)=u(t_0)$. By uniqueness, $u\equiv v$ for all $t\ge t_0$. Since $t_0$ is arbitrary, this implies $u\in \q E((0,+\infty))$.

\textit{Step 3. Behavior at $t=0$.} Define
\[
g_n(t,p)=\left\{\begin{array}{lc}
\tilde{u}_n(t,t^{-1/3}p)& 1\ge t>t_n\\
g_0(p)&t_n\ge t\ge 0
\end{array}\right..
\]
Hence, for $t>t_n$,
\[
\partial_tg_n(t,p) = (\partial_t\tilde{u}_n)(t,t^{-1/3}p) -\frac{p}{3t^{4/3}}(\partial_p\tilde{u}_n)(t,t^{-1/3}p)=-\frac{3p}{t}\widehat{\mathcal{I}u_n}(t,t^{-1/3}p).
\]
Remark \ref{rmk:boundI} implies
\[
\left\|\frac{1}{p}\partial_tg_n(t)\right\|_{L^2((0,+\infty))}\lesssim \frac{1}{t}\|\widehat{\mathcal{I}u_n}(t,t^{-1/3}p)\|_{L^2((0,+\infty))}\lesssim t^{-5/6}\|\widehat{\mathcal{I}u_n}(t)\|_{L^2((0,+\infty))} \lesssim \delta t^{-2/3}.
\]
As a consequence, the sequence $(g_n)_{n\in\m N}$ is uniformly continuous in time: for any $0\le t,s\le 1$,
\[
\left\| \frac{1}{\jap{p}}(g_n(t,p))-g_n(s,p))\right\|_{L^2((0,+\infty))}\le \int_s^t \left\|\frac{1}{p}\partial_t g_n(s') \right\|_{L^2((0,+\infty))} ds' \lesssim \delta|t^{1/3}-s^{1/3}|.
\]
Since
\[
\left\| \frac{1}{\jap{p}}g_n(t,p)\right\|_{L^2(\m R)}\lesssim \|g_n(t)\|_{L^\infty(\m R)}\lesssim \delta,
\]
by the Ascoli-Àrzela theorem, there exists $g \in \q C([0,1], L^2(\jap{p}^{-1}dp))$ such that
\[
g_n \to g  \mbox{ in }\q C([0,1], L^2(\jap{p}^{-1}dp)),\quad g(0,p)=g_0(p)
\]
and
\[
\left\| \frac{1}{\jap{p}}(g (t,p))-g_0(p))\right\|_{L^2(\m R)} \lesssim \delta t^{1/3}, t>0.
\]
On the other hand, for each fixed $t>0$,
\[
g_n(t,p)=\tilde{u}_n(t,t^{-1/3}p) \to \tilde{u}(t,t^{-1/3}p) \quad \text{in } L^\infty(\m R) \text{ as } n\to \infty.
\]
Thus $\tilde{u}(t,t^{-1/3}p)=g(t,p)$ and \eqref{eq:taxablowup} follows.
\end{proof}

\section{Asymptotic behaviour as $t\to+\infty$} \label{sec:8}

Given $\nu\in (0,1/2)$, consider the norm
\[
\|u\|_{\mathcal{Y}_t^\nu}= t^{\frac{\nu}{3}-\frac{1}{6}}\|\partial_p\tilde{u}\|_{L^2} + \sup_{p\in\m R}\left\{ |p|^{-\nu}\jap{p^3t}^{\frac{\nu}{3}-\frac{1}{6}}|\tilde{u}(p)| \right\}
\]
and define
\[
\mathcal{Y}_t^\nu=\left\{ u\in \mathcal{S}'(\m R;\m R)\ : \tilde{u}(0)=0, \ \|u\|_{\mathcal{Y}_t^\nu}<\infty \right\}.
\]

For the sake of completeness, we recall the following technical lemma.
\begin{lem}[{\cite[Lemma 2.2]{HN01}}]\label{lem:media0}
	Given $\nu\in (9/20, 1/2)$, let $u,v, w\in \q E([1,T])$ be such that $\tilde{w}(t,0)=0$.
	Then there exists a universal constant $C>0$ such that
	\begin{equation}\label{eq:trilinear}
	\forall t \in [1,T], \quad \|u(t)v(t)w(t)\|_{L^2}\le C t^{-\frac{5}{6}-\frac{\nu}{3}}\|u(t) \|_{\q E(t)} \|v(t)\|_{\q E(t)} \|w(t)\|_{\mathcal{Y}_t^\nu}.
	\end{equation}
	Furthermore,
	\begin{equation}\label{eq:asympw}
	\left|w(t,x)-\frac{1}{t^{1/3}}\partere \Ai\left(\frac{x}{t^{1/3}}\right)\tilde{w}(t,y)\right|\lesssim t^{-1/3-\nu/3}\jap{|x|/t^{1/3}}^{-1/4}\|w(t)\|_{\mathcal{Y}_t^\nu},
	\end{equation}
	where
	\begin{equation}\label{eq:p0}
	y=\left\{\begin{array}{ll}
\sqrt{-x/3t},& x<0\\
0,& x>0
\end{array}\right..
	\end{equation}

\end{lem}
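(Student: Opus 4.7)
The argument breaks into two stages: first the pointwise expansion \eqref{eq:asympw}, then the trilinear bound \eqref{eq:trilinear}.

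\emph{Stage 1 (pointwise asymptotic).} The plan is to rerun the proof of Lemma \ref{lema:decayE}, with one additional ingredient: the Hölder-type control on $\tilde w$ near $p=0$ encoded in the $\mathcal{Y}_t^\nu$ norm. After the change of variables $z = x/t^{1/3}$, $q = p t^{1/3}$, one factors out the Airy principal part to obtain
\[ w(t,x) - \frac{1}{t^{1/3}} \Re \Ai(z)\, \tilde w(t,y) = \frac{1}{\pi t^{1/3}} \Re \int_0^\infty e^{iqz + iq^3} \bigl( \tilde w(t, q/t^{1/3}) - \tilde w(t, y) \bigr) dq. \]
When $x \ge 0$, one has $y = 0$ and the hypothesis $\tilde w(t, 0) = 0$ reduces the integrand to $\tilde w(t, q/t^{1/3})$, which the $\mathcal{Y}_t^\nu$-bound controls by $t^{-\nu/3} q^\nu \jap{q^3}^{(1-2\nu)/6} \|w\|_{\mathcal{Y}_t^\nu}$; the factor $t^{-\nu/3}$ is the sought-after gain, and the same oscillatory integration by parts as in the proof of \eqref{eq:est1lem1} supplies the spatial decay $\jap{z}^{-1/4}$. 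When $x < 0$ the Region A--B--C splitting around the stationary phase $p = y$ used in the proof of \eqref{eq:est6lem1} is rerun, this time interpolating the $\dot H^1$-type estimate $|\tilde w(t,p) - \tilde w(t,y)| \lesssim \sqrt{|p-y|}\,\|\partial_p\tilde w\|_{L^2}$ with the Hölder information $|\tilde w(t,p)| \le |p|^\nu \jap{p^3 t}^{(1-2\nu)/6}\|w\|_{\mathcal{Y}_t^\nu}$ (and similarly at $p=y$) to produce the extra factor $t^{-\nu/3}$ in each region.

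\emph{Stage 2 (trilinear bound).} Starting from $\|uvw\|_{L^2}^2 = \int |uvw|^2\, dx$, I bound $u$ and $v$ pointwise using \eqref{eq:est1lem1}--\eqref{eq:est3lem1}. For $x \ge 0$, Stage 1 directly yields $|w(t,x)| \lesssim t^{-1/3-\nu/3}\jap{z}^{-1/4}\|w\|_{\mathcal{Y}_t^\nu}$; for $x < 0$ I keep the decomposition $w = t^{-1/3} \Re \Ai(z)\tilde w(t,y) + R$ from Stage 1 and handle the principal piece via the Airy decay $|\Ai(z)| \lesssim \jap{z}^{-1/4}$ together with $|\tilde w(t,y)| \le y^\nu \jap{y^3 t}^{(1-2\nu)/6}\|w\|_{\mathcal{Y}_t^\nu}$, where $y = \sqrt{-x/3t}$, and bound $R$ by Stage 1. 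Rescaling $x = t^{1/3} z$ immediately extracts the temporal factor $t^{-5/6 -\nu/3}$; convergence of the remaining $z$-integrals uses the improved decay \eqref{eq:est3lem1} of $u,v$ for $z \gg 1$ and the integrable profile $|z|^\nu \jap{z}^{-\nu-1/2}$ obtained on the left from the above bounds.

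\emph{Main obstacle.} The delicate step is Stage 1 for $x < -t^{1/3}$, specifically in the stationary-phase Region C ($|p - y| \le 2^{\ell_0}$), where extracting the full $t^{-\nu/3}$ gain requires a careful interpolation between the Hölder control supplied by $\mathcal{Y}_t^\nu$ and the $\dot H^1$ control on $\partial_p\tilde w$; the narrow range $\nu \in (9/20,1/2)$ is precisely what is needed to balance the competing powers of $\ell_0$ arising in Regions A, B, and C.
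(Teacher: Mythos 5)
The paper does not actually prove this lemma: it is quoted verbatim from \cite[Lemma 2.2]{HN01}, so there is no in-paper argument to compare against. Judged on its own merits, your Stage 1 is sound (indeed simpler than you suggest: the remainder $R(t,x)=w(t,x)-t^{-1/3}\Re\Ai(z)\tilde w(t,y)$ is already bounded in the proof of \eqref{eq:est1lem1} by $t^{-1/2}\jap{z}^{-1/4}\|\partial_p\tilde w\|_{L^2((0,+\infty))}$ using only the $\dot H^1$ part, so inserting $\|\partial_p\tilde w\|_{L^2}\le t^{1/6-\nu/3}\|w\|_{\mathcal Y_t^\nu}$ gives \eqref{eq:asympw} at once; the A/B/C splitting of \eqref{eq:est6lem1} is only needed for the stronger $3/10$ decay). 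But Stage 2 has a genuine gap on the left half-line.

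The problem is that the principal Airy part of $w$ does not decay in $z=x/t^{1/3}$ for $x\ll -t^{1/3}$. With $y=\sqrt{-x/3t}$ one has $y^3t\sim |z|^{3/2}$, so the sup part of the $\mathcal Y_t^\nu$ norm gives
\[
\bigl|\tilde w(t,y)\bigr|\le y^\nu\jap{y^3t}^{\frac16-\frac\nu3}\|w\|_{\mathcal Y_t^\nu}\sim t^{-\nu/3}|z|^{\nu/2}\,|z|^{\frac14-\frac\nu2}\|w\|_{\mathcal Y_t^\nu}=t^{-\nu/3}|z|^{1/4}\|w\|_{\mathcal Y_t^\nu},
\]
which exactly cancels the Airy decay $|\Ai(z)|\lesssim\jap z^{-1/4}$: the principal part of $w$ is only $O(t^{-1/3-\nu/3})$ on the left, with no decay in $z$. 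Combined with $|u|^2|v|^2\lesssim t^{-4/3}\jap z^{-1}$, the integrand of $\|uvw\|_{L^2}^2$ has profile $\jap z^{-1}$ on the left, which is logarithmically divergent; your claimed ``integrable profile $|z|^\nu\jap z^{-\nu-1/2}$'' behaves like $|z|^{-1/2}$ at infinity and is not integrable either, so the purely pointwise strategy cannot close. The correct repair is to treat the principal part in $L^2$ rather than in $L^\infty$: change variables $x\mapsto y$ (so that $dx\sim t\,y\,dy$ and $\jap z^{-3/2}\sim t^{-1}y^{-3}$ on $x<-t^{1/3}$) and apply Hardy's inequality
\[
\int_0^\infty\frac{|\tilde w(t,y)|^2}{y^2}\,dy\le 4\,\|\partial_p\tilde w(t)\|_{L^2((0,+\infty))}^2,
\]
which is where the hypothesis $\tilde w(t,0)=0$ is genuinely used (the sup bound alone fails at both ends of this integral, since $\nu<1/2$). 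This yields $\int_{x<-t^{1/3}}\jap z^{-1}\,|w_{\mathrm{princ}}|^2\,dx\lesssim t^{-1/3-2\nu/3}\|w\|_{\mathcal Y_t^\nu}^2$ and, together with your (correct) treatment of the remainder and of the region $x>-t^{1/3}$, the bound $t^{-5/6-\nu/3}$. Without Hardy's inequality (or an equivalent $L^2$-in-$y$ argument), Stage 2 as written loses a logarithm.
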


\begin{prop}[Asymptotics on the Fourier space]\label{prop:asympfourier}
	Given $u\in \q E([1,\infty))$ solution of $(\partial_t+\partial_{xxx})u=(u^3)_x$ with $\|u\|_{\q E([0,+\infty))}\le \delta$, let $S\in \q E([1,\infty))$ be a self-similar solution with
\[
	\widehat{S(1)}(0^+)=\widehat{u(1)}(0^+).
\]
	Then, for any $\nu\in (9/20,1/2)$,
	\begin{equation}\label{eq:restoemYt}
	\|u(t)-S(t)\|_{\mathcal{Y}_t^\nu}<30\delta,\quad t\ge 1.
	\end{equation}
	On the other hand, there exists $U_\infty\in C_b(\m R\setminus\{0\})$ such that
	\begin{equation}\label{eq:modificadofourier}
	\left| \tilde{u}(t,p) - U_\infty(p)\exp\left(-\frac{i\epsilon}{4\pi}|U_\infty(p)|^2\log t\right) \right|\lesssim \frac{\delta}{\jap{p^3t}^{\frac{1}{12}}}
	\end{equation}
	Finally, one has
\begin{equation}\label{eq:relacaoSU}
	\lim_{p\to 0} |U_\infty(p)| = \lim_{tp^3\to \infty} |\mathcal{S}(t,p)|. 
\end{equation}
	
\end{prop}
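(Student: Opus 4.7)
The statement naturally decomposes into three parts: (i) the $\mathcal{Y}_t^\nu$-bound on $w := u-S$; (ii) the existence of $U_\infty$ with the modified-scattering estimate; and (iii) the identification of $\lim_{p\to 0^+}|U_\infty(p)|$ with the high-frequency value of $S$. Note first that the hypothesis $\hat{S}(1,0^+)=\hat{u}(1,0^+)$ translates into $\tilde{w}(1,0^+)=0$, a condition preserved in time since the evolution formula \eqref{eq:quaseperfil} vanishes at $p=0$; thus $w(t)$ is a legitimate element of $\mathcal{Y}_t^\nu$ at every time. For (i), the plan is to bootstrap $\|w(t)\|_{\mathcal{Y}_t^\nu}$ directly on the difference equation $\partial_t\tilde{w} = -\frac{\epsilon}{4\pi^2}(\mathcal{N}[u] - \mathcal{N}[S])$, paralleling the argument of Section 4. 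Factoring $u^3 - S^3 = w(u^2+uS+S^2)$ in physical space and applying the trilinear bound \eqref{eq:trilinear} of Lemma \ref{lem:media0} with the mean-zero entry being $w$ yields a source term of size $t^{-5/6-\nu/3}$; after integration in time this matches the weight $t^{1/6-\nu/3}$ built into the $\dot{H}^1$ part of $\mathcal{Y}_t^\nu$. For the weighted $L^\infty$ part, one develops $\mathcal{N}[u]-\mathcal{N}[S]$ through Lemma \ref{lem:desenvolveoscilatorio} and integrates against the phases $E_u, E_S$ as in Lemma \ref{lem:desenvassimpt}, the cancellation at $p=0$ being crucial to control the negative weight $|p|^{-\nu}$. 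Taking $\delta$ small enough, a continuity argument closes the bootstrap with universal constant $30$.

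For (ii), the estimate \eqref{eq:decayuniformt} of Lemma \ref{lem:desenvassimpt} shows that for each fixed $p \neq 0$ the map $t \mapsto (\tilde{u} E_u)(t,p)$ is Cauchy as $t \to \infty$; define $U_\infty(p)$ to be its limit. Sending $\tau\to\infty$ with $t$ fixed in \eqref{eq:decayuniformt} yields $|(\tilde{u}E_u)(t,p) - U_\infty(p)| \lesssim \delta \jap{tp^3}^{-1/12}$, and uniform Cauchy convergence on compact subsets of $\m R \setminus \{0\}$ gives $U_\infty \in \q C_b(\m R \setminus \{0\}, \m C)$. To replace $E_u$ by $\exp(-\frac{i\epsilon}{4\pi}|U_\infty(p)|^2 \log t)$, observe that $|\tilde{u}(s,p)| \to |U_\infty(p)|$ at the rate $\jap{sp^3}^{-1/12}$ (since $|E_u|=1$), hence
\[
\int_1^t \frac{p^3}{\jap{p^3s}}|\tilde{u}(s,p)|^2 ds = |U_\infty(p)|^2 \int_1^t \frac{p^3}{\jap{p^3s}}ds + O(1).
\]
A direct computation gives $\int_1^t p^3 \jap{p^3s}^{-1} ds = \log t + 3\log|p| + O(1)$ when $tp^3 \gg 1$; absorbing the $p$-dependent and bounded phases into a redefinition of $U_\infty(p)$ yields \eqref{eq:modificadofourier}.

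For (iii), applying (ii) to $S$ itself gives a profile $U_\infty^{S}$. The self-similarity $\tilde{S}(t,p) = e^{-itp^3}\hat{V}(t^{1/3}p)$ combined with the asymptotic \eqref{eq:ss_hf} implies $|\tilde{S}(t,p)| \to |A|$ as $tp^3 \to \infty$, hence $|U_\infty^{S}(p)| \equiv |A| = \lim_{tp^3 \to \infty}|\mathcal{S}(t,p)|$ for every $p > 0$. It remains to compute $\lim_{p \to 0^+}|U_\infty(p)|$. Fix $p > 0$ small and set $t = t(p) := p^{-4}$, so that $tp^3 = 1/p \to \infty$ as $p \to 0^+$. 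Decomposing
\[
\bigl||U_\infty(p)| - |A|\bigr| \le \bigl||U_\infty(p)| - |\tilde u(t,p)|\bigr| + |\tilde u(t,p) - \tilde S(t,p)| + \bigl||\tilde S(t,p)| - |A|\bigr|,
\]
the first summand is $\lesssim \delta p^{1/12}$ by (ii), the third is also $o_p(1)$ by the asymptotic of $\hat V$, and by (i) the middle one obeys $|\tilde u(t,p) - \tilde S(t,p)| \le 30\delta\, p^\nu \jap{tp^3}^{1/6-\nu/3} \lesssim \delta\, p^{4\nu/3-1/6}$, which tends to $0$ since $\nu > 9/20 > 1/8$. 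Sending $p \to 0^+$ proves \eqref{eq:relacaoSU}. The main obstacle lies in (i): propagating the mean-zero cancellation at $p=0$ through the nonlinear evolution precludes a purely perturbative argument and forces a direct bootstrap in which the algebraic structure of $\mathcal N[u]-\mathcal N[S]$ must be respected throughout.
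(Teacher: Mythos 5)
Parts (ii) and (iii) of your proposal are essentially the paper's own argument: $U_\infty$ is the limit of $\tilde u E_u$ furnished by \eqref{eq:decayuniformt}, the phase is renormalized by splitting $\int_1^t p^3\jap{p^3s}^{-1}ds$ into $\log t$ plus $p$-dependent constants, and \eqref{eq:relacaoSU} follows by letting $p\to0$ and $tp^3\to\infty$ simultaneously (your explicit choice $t=p^{-4}$ is a clean way to do this). One point you gloss over in (ii): to absorb the error $\int_1^t\frac{p^3}{\jap{p^3s}}\bigl(|\tilde u(s,p)|^2-|U_\infty(p)|^2\bigr)ds$ into a $t$-independent phase it is not enough that it be $O(1)$ — it must \emph{converge} as $t\to\infty$, with rate $\jap{p^3t}^{-1/12}$ so as to preserve the claimed error in \eqref{eq:modificadofourier}. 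This does hold (the integrand is $O(p^3\jap{p^3s}^{-13/12})$, whose tail is $O(\jap{p^3t}^{-1/12})$), and the paper isolates it as the quantity $\psi(t,p)\to\Psi(p)$; you should say so.

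The genuine gap is in part (i). The paper does \emph{not} run an estimate on the difference equation $\partial_t\tilde w=-\frac{\epsilon}{4\pi^2}(\mathcal N[u]-\mathcal N[S])$. It exploits that $S$ is exactly self-similar, so $\mathcal I S=0$, hence $\mathcal I w=\mathcal I u$ and $\|\widehat{\mathcal I w}(t)\|_{L^2}$ is already controlled by Remark \ref{rmk:boundI} — no new energy inequality for $w$ is needed. The $\dot H^1$ part of $\|w\|_{\mathcal Y_t^\nu}$ then comes from the \emph{pointwise-in-time} identity $\partial_p\tilde w=-ie^{itp^3}\widehat{\mathcal I w}+3\epsilon t\,e^{itp^3}\widehat{u^3-S^3}$ together with the trilinear bound \eqref{eq:trilinear}, and the weighted $L^\infty$ part is then immediate from $\tilde w(t,0^+)=0$ via $|\tilde w(t,p)|\le\sqrt p\,\|\partial_p\tilde w(t)\|_{L^2((0,+\infty))}\le 2\delta|p|^\nu\jap{p^3t}^{1/6-\nu/3}$. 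Your alternative plan for the $L^\infty$ component — developing $\mathcal N[u]-\mathcal N[S]$ by Lemma \ref{lem:desenvolveoscilatorio} and integrating in time against the phases — would not close: the remainders $R[u]$ and $R[S]$ in \eqref{eq:N_expansion} carry no smallness in $w$, and their time integral contributes an amount to $|\tilde w(t,p)|$ that is $O(\delta^3)$ uniformly but has no factor $|p|^\nu$ near $p=0$, so the weight $|p|^{-\nu}$ in $\|\cdot\|_{\mathcal Y_t^\nu}$ diverges. Likewise, integrating the $p$-differentiated Duhamel formula for the $\dot H^1$ part produces uncontrolled factors $tsp^2$ from the phase, which is precisely why the operator $\mathcal I$ is used; without noticing $\mathcal I S=0$ you would have to redo the whole of Lemma \ref{lem:H1aprox} for the difference, and it is not clear that closes. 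With the observation $\mathcal I S=0$ and the $\sqrt p$-Hölder bound in place of your stationary-phase route, the rest of your argument is sound.
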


\begin{nb}
	As a direct consequence, we see that, if $S$ and $S'$ are two self-similar solutions with $\|S\|_{\q E(1)}, \|S'\|_{\q E(1)}\le \delta$ and 
\[ \widehat{S(1)}(0)=\widehat{S'(1)}(0), \]
	then $S=S'$.
\end{nb}

\begin{proof}
	\emph{Proof of }\eqref{eq:restoemYt}. 
	Set $w(t)=u(t)-S(t)$. Observe that
\[
	\|w(1)\|_{\mathcal{Y}_t^\nu}\le 2\delta.
\]
	Suppose that there exists $T_1>1$ such that
\[
	\|w(t)\|_{\mathcal{Y}_t^\nu}<30\delta,\quad 1\le t\le T_1,\text{ and }\|w(T_1)\|_{\mathcal{Y}_{T_1}^\nu}=30\delta.
\]
	Due to the self-similar structure of $S$, $\mathcal{I}S=0$ and so
\[
	\partial_p\widetilde{S}=3te^{itp^3}\widehat{S^3}.
\]
	Since $(\partial_t+\partial_{xxx})w=-\epsilon(w^3-3uw^2+3u^2w)_x$,
	\begin{align*}
	\|\tilde{w}_p(t)\|_{L^2((0,+\infty))}&\le \|\widehat{\mathcal{I}w}(t)\|_{L^2((0,+\infty))} + 3t\|(w^3-3uw^2+3u^2w)(t)\|_{L^2} \\&\le \|\widehat{\mathcal{I}u}(t)\|_{L^2((0,+\infty))} + 30\delta^3 t^{\frac{1}{6}-\frac{\nu}{3}} \le 2\delta t^{\frac{1}{6}-\frac{\nu}{3}}.
	\end{align*}
	Moreover, since $\tilde{w}(0)=0$,
	\begin{align*}
	|\tilde{w}(t,p)|\le \sqrt{p}\|\tilde{w}_p(t)\|_{L^2} \le 2\delta |p|^{\nu}\jap{p^3t}^{\frac{1}{6}-\frac{\nu}{3}}
	\end{align*}
	Thus 
\[
	30\delta=\|w(T_1)\|_{\mathcal{Y}_{T_1}^\nu}\le 4\delta,
\]
	which is a contradiction. Hence 
\[
	\|w(t)\|_{\mathcal{Y}_t^\nu}<30\delta,\quad t\ge 1
\]
	and \eqref{eq:restoemYt} follows. 
	
	\emph{Proof of }\eqref{eq:modificadofourier}. From \eqref{eq:decayuniformt}, we have 
\[
	|(\tilde{u}E_u)(t,p)-(\tilde{u}E_u)(\tau,p)|\lesssim \frac{\delta}{\jap{p^3\tau}^{\frac{1}{12}}},\quad t\ge \tau
\]
	where we recall that  (see )
\[
	E_u(t,p)=\exp\left(-i\epsilon\int_1^t\frac{p^3}{4\pi \jap{p^3s}}|\tilde{u}(s,p)|^2ds\right).
\]
	Thus there exists $U\in L^\infty(\m R)$, $U\in \q C(\m R\setminus\{0\})$, such that
	\begin{equation}\label{eq:perfilfinal}
	|(\tilde{u}E_u)(t,p)- U(p)|\lesssim \frac{\delta}{\jap{p^3t}^{\frac{1}{12}}}.
	\end{equation}
	Writing
\[
	\psi(t,p)=\int_1^t\frac{p^3}{4\pi \jap{p^3s}}(|\tilde{u}(s,p)|^2 - |\tilde{u}(t,p)|^2)ds,
\]
	we have
	\begin{align*}
	\psi(t)-\psi(\tau)&=\int_\tau^t \frac{p^3}{4\pi \jap{p^3s}}(|\tilde{u}(s,p)|^2 - |\tilde{u}(t,p)|^2)ds \\
	& \qquad + (|\tilde{u}(\tau,p)|^2 - |\tilde{u}(t,p)|^2)\int_\tau^t \frac{p^3}{4\pi \jap{p^3s}}ds 
	\end{align*}
	which implies that
\[
	|\psi(t,p)-\psi(\tau,p)|\lesssim \frac{\delta}{\jap{p^3\tau}^{\frac{1}{12}}}
\]
	Therefore there exists $\Psi\in L^\infty(\m R)$, $\Psi\in \q C(\m R\setminus\{0\})$ such that
	\begin{equation}\label{eq:fasefinal}
	|\psi(t,p)-\Psi(p)|\lesssim \frac{\delta}{\jap{p^3t}^{\frac{1}{12}}}
	\end{equation}
	We decompose
	\begin{align*}
	\int_{1}^t\frac{p^3ds}{\jap{p^3s}} &= \int_{p^3}^{p^3t}\frac{ds}{\jap{s}} = \int_{p^3}^{p^3t}\frac{ds}{s} +\int_{p^3}^{p^3t}\left(\frac{1}{\jap{s}}-\frac{1}{s}\right)ds \\&= \log t + \int_{p^3}^\infty \frac{s-\jap{s}}{s\jap{s}} -  \int_{tp^3}^\infty \frac{s-\jap{s}}{s\jap{s}} =:\log t + \mathcal{R}(p^3) - \mathcal{R}(tp^3)
	\end{align*}
	Since
\[
	|\jap{s}-s|=s\left(\left(1+\frac{1}{s^2}\right)^{1/2}-1\right)\lesssim s^{-1},
\]
	we have
\[
	|\mathcal{R}(y)|\lesssim \frac{1}{y^2}.
\]
	Thus, 
\[
	\left|\int_1^t \frac{p^3 |\tilde{u}(s,p)|^2ds}{4\pi(p^3s)^{1-\gamma/2} \jap{p^3s}^{\gamma/2}} - \Psi(p)-\frac{1}{4\pi}|U(p)|^2\mathcal{R}(p^3) - \frac{1}{4\pi}|U(p)|^2\log t\right| \lesssim \frac{\delta}{(p^3t)^{\frac{1}{12}}}.
\]
	Defining
\[
	U_\infty(p)=U(p)\exp\left(-i\epsilon\Psi(p) - \frac{i\epsilon}{4\pi}|U(p)|^2\mathcal{R}(p^3)\right),
\]
	\eqref{eq:perfilfinal} and \eqref{eq:fasefinal} imply
	\begin{align*}
	\left|\tilde{u}(t,p) - U_\infty(p)\exp\left(- \frac{i\epsilon}{4\pi}|U_\infty(p)|^2\log t\right) \right|\lesssim \frac{\delta}{\jap{p^3t}^{\frac{1}{12}}}.
	\end{align*}
	
	\emph{Proof of }\eqref{eq:relacaoSU}: Using \eqref{eq:restoemYt} and \eqref{eq:modificadofourier},
\[
	\left| |S(t,p)| - |U_\infty(p)| \right|\lesssim |p|^{\nu}\jap{tp^3}^{\frac{1}{6}-\frac{\nu}{3}} + \frac{1}{\jap{p^3t}^{\frac{1}{12}}}
\]
	Take, at the same time, $tp^3\to \infty$ and $p\to 0$ so that the right-side goes to zero. Then
\[
	\lim_{p\to 0} |U_\infty(p)| = \lim_{tp^3\to \infty} |S(t,p)|.
\]
\end{proof}

\begin{nb}
	If $u$ is an $L^2$ solution of (mKdV), then \eqref{eq:modificadofourier} implies that $U_\infty\in L^p(\m R)$, for $p$ large. On the other hand, if one takes $u$ as a self-similar solution, then $|U_\infty|$ is a constant.
\end{nb}

\begin{cor}[Asymptotics in the physical space]
	Given $u\in \q E([1,\infty))$ solution of $(\partial_t + \partial_{xxx})u=(u^3)_x$ with $\|u\|_{\q E([1,+\infty)}\le \delta$, let $S\in \q E([1,\infty))$ be the self-similar solution with
\[
	\widehat{S(1)}(0^+)=\widehat{u(1)}(0^+).
\]
	Then, for any $\nu\in(9/20,1/2)$,
	\begin{equation}\label{eq:asympu}
	\left\|u(t)-S(t)\right\|_{L^\infty}\lesssim  \frac{\delta}{t^{1/3+\nu/3}}.
	\end{equation}
	On the other hand,
	\begin{equation}\label{eq:asympumodified}
	\left|u(t,x)-\frac{1}{t^{1/3}}\partere\Ai\left(\frac{x}{t^{1/3}}\right)U_\infty\left(y\right)\exp\left(-\frac{i\epsilon}{6}|U_\infty(y)|^2\log t\right)\right|\le  \frac{\delta}{t^{1/3}\jap{x/t^{1/3}}^{3/10}},
	\end{equation}
	where $y$ is defined by \eqref{eq:p0}.
\end{cor}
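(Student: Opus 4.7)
The corollary packages two pointwise bounds, both derived by combining the Fourier-side description of Proposition~\ref{prop:asympfourier} with the decay lemmas of Section~\ref{sec:3}. I plan to prove them independently, splitting $\m R$ in each case into a region ($x < -t^{1/3}$) where the Airy stationary-phase contribution is nontrivial and its complement.

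For \eqref{eq:asympu}, set $w = u - S$. Because the $p^3$ prefactor in \eqref{eq:N_expansion} forces $\cal N[u](t,0)=0$, the limit $\tilde u(t,0^+)$ is conserved in $t$; by the choice of $S$ it coincides with $\tilde S(1,0^+)$ at $t=1$, hence $\tilde w(t,0^+)\equiv 0$, which places $w$ in the class for Lemma~\ref{lem:media0}. Proposition~\ref{prop:asympfourier} furnishes $\|w(t)\|_{\mathcal{Y}_t^\nu}\le 30\delta$ for any $\nu\in(9/20,1/2)$, and \eqref{eq:asympw} then yields
\[
\left|w(t,x) - \frac{1}{t^{1/3}}\Re\Ai\!\left(\frac{x}{t^{1/3}}\right)\tilde w(t,y)\right| \lesssim \frac{\delta}{t^{1/3+\nu/3}\jap{|x|/t^{1/3}}^{1/4}}.
\]
For $x\ge 0$, one has $y=0$ and $\tilde w(t,0)=0$, so the main term vanishes. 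For $x<0$, the bound $|\tilde w(t,p)|\lesssim |p|^{\nu}\jap{p^3 t}^{1/6-\nu/3}\delta$ (embedded in the $\mathcal{Y}_t^\nu$ norm, combined with $\tilde w(t,0)=0$) together with $|\Ai(z)|\lesssim \jap{z}^{-1/4}$ and the algebraic identity $y^3 t\sim|z|^{3/2}$ (with $z=x/t^{1/3}$) show the main term is $\lesssim \delta t^{-1/3-\nu/3}$ uniformly in $x$, yielding \eqref{eq:asympu}.

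For \eqref{eq:asympumodified}, consider first $x\ge -t^{1/3}$: here $\jap{x/t^{1/3}}^{3/10}\lesssim 1$, and both $|u(t,x)|$ (via \eqref{eq:est1lem1}) and the explicit right-hand side are individually of order $\delta t^{-1/3}$, so the estimate is trivial in this range. For $x<-t^{1/3}$, I invoke \eqref{eq:est6lem1} to replace $u(t,x)$ by $t^{-1/3}\Re[\Ai(z)\tilde u(t,y)]$ up to an error of size $\delta t^{-1/3}\jap{z}^{-3/10}$, and then substitute \eqref{eq:modificadofourier} at $p=y$ inside the real part. The additional error introduced is
\[
\frac{|\Ai(z)|}{t^{1/3}}\cdot\frac{\delta}{\jap{y^3 t}^{1/12}} \lesssim \frac{\delta}{t^{1/3}\jap{z}^{1/4+1/8}} = \frac{\delta}{t^{1/3}\jap{z}^{3/8}},
\]
using $y^3 t\sim|z|^{3/2}$ and the Airy decay; since $3/8>3/10$, this is absorbed into the existing error, and \eqref{eq:asympumodified} follows.

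The main delicacy lies in tracking the phase accumulated during the substitution step: inserting \eqref{eq:modificadofourier} inside $\Re[\Ai(z)\,\cdot\,]$ produces the slowly-varying amplitude $e^{-i\epsilon|U_\infty(y)|^2\log t/(4\pi)}$, and one must verify that this is precisely the explicit form appearing in the statement up to a residual error controlled by $\delta t^{-1/3}\jap{z}^{-3/10}$. This reduces to a careful unpacking of the leading stationary-phase form of $\Ai(z)$ as $z\to-\infty$ and checking that the constants line up with the claimed coefficient; once this bookkeeping is settled, the remainder of the argument consists only of the routine pointwise manipulations outlined above.
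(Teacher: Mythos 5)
Your proposal is correct and follows essentially the same route as the paper: Proposition \ref{prop:asympfourier} gives $\|u-S\|_{\mathcal{Y}_t^\nu}\lesssim\delta$, the main Airy term is absorbed using the pointwise bound encoded in the $\mathcal{Y}_t^\nu$ norm together with $|\Ai(z)|\lesssim\jap{z}^{-1/4}$ and \eqref{eq:asympw}, and \eqref{eq:asympumodified} is obtained by combining \eqref{eq:est6lem1} with \eqref{eq:modificadofourier}, exactly as in the paper (you merely spell out the exponent arithmetic $\jap{y^3t}^{-1/12}\sim\jap{x/t^{1/3}}^{-1/8}$ that the paper leaves implicit). The discrepancy you flag between the phase coefficients $\tfrac{1}{4\pi}$ in \eqref{eq:modificadofourier} and $\tfrac{1}{6}$ in \eqref{eq:asympumodified} cannot be repaired by ``unpacking the stationary-phase form of $\Ai$'' as you suggest, since substitution leaves the coefficient untouched; it is an inconsistency already present in the paper's own statements, and the paper's proof is equally silent about it.
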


\begin{proof}
	Define $w(t):=u(t,x)-S(t,x)$. Then, by Proposition \ref{prop:asympfourier},
\[
	\|w(t)\|_{\mathcal{Y}_t^\nu}<30\delta.
\]
	The definition of the norm of $\mathcal{Y}_t^\nu$ and the decay estimate for the Airy-Fock function imply
\[
	\left|\frac{1}{t^{1/3}}\partere \Ai\left(\frac{x}{t^{1/3}}\right)\tilde{w}\left(t,\sqrt{\frac{|x|}{3t}}\right)\right|\lesssim \delta t^{-1/3}\jap{x/t^{1/3}}^{-1/4}\left|x/t\right|^{\nu/2}\jap{x^{3/2}/t^{1/2}}^{\frac{1}{6}-\frac{\nu}{3}}\lesssim \frac{\delta}{t^{1/3+\nu/3}}.
\]
	Together with \eqref{eq:asympw}, we obtain \eqref{eq:asympu}. Finally, \eqref{eq:asympumodified} follows from \eqref{eq:est6lem1} and \eqref{eq:modificadofourier}.
\end{proof}

\section{Well-posedness for perturbations of self-similar solutions} \label{sec:9}

We remark that the results of section \ref{sec:6} exclude nontrivial self-similar solutions. On the other hand, the lack of backwards uniqueness in $\q E$ is especially problematic for the study of the dynamics around self-similar solutions at time $t=0$. To overcome this, we consider the space $\q R_\alpha$ defined in \eqref{def:R_a} in Section 2.

First let us observe that $\q R_\alpha$ is not empty. If one considers $\alpha=1$ and $a_k=1$, $k\ge 1$, then any $L^2$-function $f$ with $\mbox{supp} \hat{f} \subset B_{R}(0)$, $R<1$, is in $\q R_\alpha$:
	\[
	\|\partial_x^k f\|_{L^2}^2 = \|p^k \hat{f}\|_{L^2}^2\lesssim  R^k \| \hat{f}\|_{L^2}^2 \le \| \hat f \|_{L^2}^2.
	\]
	This means that low-frequency perturbations of self-similar solutions are acceptable.

\begin{prop}[Backward uniqueness]
	If $u_1,u_2\in \q{E}(t_1,t_2)$ are two solutions of \eqref{mkdv} with $u_1-u_2\in L^\infty((t_1,t_2),\q R_\alpha)$ and $u_1(t_2)=u_2(t_2)$, then $u_1\equiv u_2$.
\end{prop}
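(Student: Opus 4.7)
The plan is to reduce this to the $L^2$ backward-uniqueness argument of Proposition~\ref{prop:backuniqL2}, exploiting that the hypothesis $u_1-u_2 \in L^\infty((t_1,t_2),\q R_\alpha)$ is much stronger than what is actually needed. Indeed, the definition of $\q R_\alpha$ imposes $a_0 = 1$, and more generally $a_k$ is a positive real for every $k$, giving the continuous embedding $\q R_\alpha \hookrightarrow L^2(\m R)$ (and in fact $\q R_\alpha \hookrightarrow H^k$ for every $k$). The difference $w := u_1 - u_2$ therefore belongs to $L^\infty((t_1, t_2), L^2(\m R) \cap H^\infty(\m R))$, so that $w$, $\partial_x w$ and $\partial_{xx} w$ lie in $C_0(\m R)$ by Sobolev embedding and all spatial integrations by parts performed below are fully justified (boundary terms vanish).

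The next step is to copy verbatim the strategy of Proposition~\ref{prop:backuniqL2}. The function $w$ satisfies, in the distributional sense,
\[ (\partial_t + \partial_{xxx}) w + \epsilon \partial_x(w F) = 0, \qquad F := u_1^2 + u_1 u_2 + u_2^2. \]
Multiplying by $w$, integrating in $x$ and using that $\int w \partial_{xxx} w\, dx = 0$ and $\int w \partial_x(wF) dx = -\tfrac12 \int w^2 \partial_x F\, dx$, one arrives at
\[ \frac{1}{2}\frac{d}{dt}\|w(t)\|_{L^2}^2 = \frac{\epsilon}{2} \int w(t,x)^2 \partial_x F(t,x)\, dx. \]
Each cross term in $\partial_x F$ is a product $u_i \partial_x u_j$, and the pointwise bounds \eqref{eq:est1lem1}--\eqref{eq:est2lem1} of Lemma~\ref{lema:decayE} yield $\|\partial_x F(t)\|_{L^\infty} \lesssim t^{-1}(\|u_1(t)\|_{\q E(t)}^2 + \|u_2(t)\|_{\q E(t)}^2)$ uniformly on $[t_1,t_2]$. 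This delivers the differential inequality $\bigl|\frac{d}{dt}\|w(t)\|_{L^2}^2\bigr| \lesssim t^{-1}\|w(t)\|_{L^2}^2$, and backwards Gronwall starting from $w(t_2)=0$ forces $w \equiv 0$ on $[t_1,t_2]$.

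The only technical point that needs care, and which I expect to be the main (mild) obstacle, is rigorously justifying the time differentiation of $\|w(t)\|_{L^2}^2$: \emph{a priori} the solutions $u_i \in \q E$ have only distributional time regularity. This is handled exactly as in Step~1 of the proof of Proposition~\ref{prop:uniq}, by mollifying $w$ in both space and time. The estimate $\|wF\|_{L^2} \le \|F\|_{L^\infty}\|w\|_{L^2}$ together with $F \in L^\infty_{x,t}$ (again from Lemma~\ref{lema:decayE}) places $\partial_x(wF)$ in $L^\infty((t_1,t_2), L^2(\m R))$, so that $w \in \q C([t_1,t_2], L^2)$ and the energy identity is obtained by passing to the limit in the regularised equation. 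Notably, the nested condition $a_k \le \alpha a_{2k+1}$ defining $\q R_\alpha$ plays no role in this argument; it will be essential only for the companion stability result (Proposition~\ref{prop:stab}), whereas here solely the $k=0$ level of the $\q R_\alpha$ norm is used.
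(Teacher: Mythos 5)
Your proof is correct and is essentially the paper's own argument: the paper likewise observes that the a priori knowledge $u_1-u_2\in L^\infty((t_1,t_2),H^1(\m R))$ (here in fact $H^k$ for every $k$, so all integrations by parts and the space-time regularization are justified) reduces backward uniqueness to the same $L^2$-energy/Gronwall computation used in Propositions \ref{prop:uniq} and \ref{prop:backuniqL2}, the point being precisely that the bad boundary/sign terms of the forward argument become controllable once $\partial_x w\in L^2(\m R)$ globally. One harmless slip: $\int w\,\partial_x(wF)\,dx=+\tfrac{1}{2}\int w^2\,\partial_x F\,dx$ rather than $-\tfrac{1}{2}\int w^2\,\partial_x F\,dx$, which is immaterial since you take absolute values before applying Gronwall.
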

\begin{proof}
	Having the \textit{a priori} knowledge that $u_1-u_2\in L^\infty((t_1,t_2),H^1(\m R))$ allows us to proceed as in Proposition \ref{prop:uniq}.
\end{proof}

Now fix a self-similar profile $\mathcal{S}\in \q E(1)$ and $\tilde{S}(t,p)=\mathcal{S}(t^{1/3}p)$. Define
\begin{equation}
\q E_S(t) = \left\{ u\in \q E(t): \hat{u}(t)-\tilde{S}(t)\in \q R_\alpha  \right\}
\end{equation}
and, for any time interval $I\subset(0,\infty)$,
\begin{equation}
\q E_S(I)=\left\{ u\in \q E(I): \hat{u}-\tilde{S}\in \q C(I,\q R_\alpha) \right\}
\end{equation}
endowed with the norm
\[
\|u\|_{\q E_S(I)}=\sup_{t \in I} \left(\|u(t)\|_{\q E(t)} + \|\hat{u}(t)-\tilde{S}(t)\|_{\q R_\alpha}\right),
\]
The remainder of this section concerns the existence of solution on $\q E_S$, which follows from a persistence result analogous to Proposition \ref{prop:persistL2}.

\begin{lem}\label{lem:uniqS}
	Let $v$ be a solution of \eqref{mkdv} given by Proposition \ref{prop:exist} with initial condition $\tilde{v}(1)=\mathcal{S}$. Then $v\equiv S$.
\end{lem}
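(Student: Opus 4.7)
The plan is to show that $\widehat{\mathcal{I}v}\equiv 0$, where $\mathcal{I}$ is the vector field introduced in Section \ref{sec:4}. This operator is tailor-made to detect self-similarity: a direct computation from $\tilde S(t,p)=\mathcal{S}(t^{1/3}p)$ yields $p\partial_p\tilde S=3t\partial_t\tilde S$, so $\widehat{\mathcal{I}S}\equiv 0$. Once $\widehat{\mathcal{I}v}\equiv 0$ is established, $\tilde v$ will satisfy the transport equation $p\partial_p\tilde v=3t\partial_t\tilde v$ whose characteristics are $\{pt^{1/3}=\mathrm{const}\}$; the unique solution matching the datum $\tilde v(1)=\mathcal{S}$ is $\tilde S(t,p)=\mathcal{S}(pt^{1/3})$, whence $v\equiv S$.

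The starting point is $\widehat{\mathcal{I}v}(1)=0$. Indeed, from $\tilde v(1)=\mathcal{S}=\tilde S(1)$ one has $\partial_p\hat v(1,\cdot)=\partial_p\hat S(1,\cdot)$, and the equation \eqref{mkdv} forces
\[
\partial_t\hat v(1,p)=ip^3\hat v(1,p)-\epsilon ip\,\widehat{v^3}(1,p)=\partial_t\hat S(1,p)
\]
as well; combining these, $\widehat{\mathcal{I}v}(1,\cdot)=\widehat{\mathcal{I}S}(1,\cdot)=0$. Since the hypothesis of Proposition \ref{prop:exist} implicitly requires $\|\mathcal{S}\|_{\q E(1)}<\delta$, Remark \ref{rmk:boundI} applies and gives
\[
\|\widehat{\mathcal{I}v}(t)\|_{L^2((0,+\infty))}\le \|\widehat{\mathcal{I}v}(1)\|_{L^2((0,+\infty))}\cdot t^{\pm\kappa\delta^2}=0
\]
for every $t$ on the interval of existence, hence $\widehat{\mathcal{I}v}\equiv 0$ in $L^2((0,+\infty))$.

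The final step is to integrate the transport equation $p\partial_p\tilde v=3t\partial_t\tilde v$. Along a characteristic $p=ct^{-1/3}$, one has $\frac{d}{dt}\tilde v(t,ct^{-1/3})=\partial_t\tilde v-\frac{p}{3t}\partial_p\tilde v=0$, so $\tilde v(t,p)=\tilde v(1,pt^{1/3})=\mathcal{S}(pt^{1/3})=\tilde S(t,p)$ for $p>0$; the case $p<0$ follows from the reality of $v$ and $S$. The most delicate step will be to justify this characteristic-by-characteristic integration rigorously from the mere $L^2$-identity $\widehat{\mathcal{I}v}=0$; the regularity $\tilde v\in \q C(I,L^\infty)\cap L^\infty(I,\dot H^1((0,+\infty)))$ established in Section \ref{sec:5}, combined with the explicit formula $\partial_t\tilde v=-\frac{\epsilon}{4\pi^2}\mathcal{N}[v]$ (which shows $\partial_t\tilde v$ is a bounded function vanishing like $p^3$ near $p=0$), is sufficient to give meaning to the characteristic ODE and conclude.
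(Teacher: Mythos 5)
Your proof is correct and follows essentially the same route as the paper: show $\widehat{\mathcal{I}v}\equiv 0$ using the Gronwall-type estimates of Remark \ref{rmk:boundI}, then integrate the resulting transport relation along the self-similar characteristics $pt^{1/3}=\mathrm{const}$. The only (harmless, arguably cleaner) difference is how the argument is seeded: the paper first invokes forward uniqueness to get $v\equiv S$ for $t\ge 1$, hence $\widehat{\mathcal{I}v}(t)=0$ there, and then propagates backwards via \eqref{eq:estIparatras}, whereas you observe directly that $\widehat{\mathcal{I}v}(1)=\widehat{\mathcal{I}S}(1)=0$ because $\mathcal{I}$ at a fixed time is determined by the data at that time through the equation, and then propagate in both time directions from $t=1$.
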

\begin{proof}
	By forward uniqueness, $v\equiv S$ for $t>1$. Since $S$ is a self-similar solution, it satisfies
	\[
	\|\widehat{\mathcal{I}v}(t)\|_{L^2((0,+\infty))}=	\|\widehat{\mathcal{I}S}(t)\|_{L^2((0,+\infty))}=0, \ t\ge 1.
	\]
	The inequality \eqref{eq:estIparatras} then implies that $\widehat{\mathcal{I}v}(t,p)\equiv 0$, $t<1, p\neq0$. Setting
	\[
	\mathcal{V}(t,p)=\tilde{v}(t,t^{-1/3}p)
	\]
	a simple computation yields
	\[
	\partial_t\mathcal{V}(t,p)=-\frac{3p}{t}\widehat{\mathcal{I}v}(t, t^{-1/3}p)\equiv 0.
	\]
	Hence $\mathcal{V}(t,p)=\mathcal{V}(1,p)=\mathcal{S}(p)$ and so $v\equiv S$.
\end{proof}

For any given $n\in\m N$, we define $S_n$ as the solution of \eqref{pimkdv} with $\tilde{u}_1=\mathcal{S}$. For large $n$, Lemma \ref{lem:uniformtime} implies that $S_n$ is defined on $[T_-,T_+]$ and Lemma \ref{lem:uniqS} ensures that the limit of $S_n$ is the self-similar solution $S$.

\begin{prop}[Persistence of $\q E_S$]
	Given $u_1\in \q E_S(1)$, the corresponding solution $u\in \q E(I)$ of \eqref{mkdv} given by Proposition \ref{prop:exist} satisfies $u\in \q E_S(I)$.
\end{prop}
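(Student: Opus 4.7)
The plan is to mimic the proof of Proposition \ref{prop:persistL2}, by introducing approximate solutions, deriving a uniform $\q R_\alpha$-bound for the difference $w := u - S$ at the approximate level, and passing to the limit. Let $u_n$ (resp.\ $S_n$) denote the solution of \eqref{pimkdv} with initial data $\Pi_n u_1$ (resp.\ $\Pi_n S(1)$, i.e.\ the approximation of the self-similar solution considered just before the statement). By Lemma \ref{lem:uniformtime} both are defined on $I$ for $n$ large, and by Lemma \ref{lem:uniqS} combined with the convergence analysis of Proposition \ref{prop:exist}, $u_n \to u$ and $S_n \to S$ pointwise on the Fourier and physical sides. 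Since $\chi_n \in \mathcal{S}(\m R)$, $u_n$ and $S_n$ belong to $H^\infty(\m R)$ for every $t$, so that $w_n := u_n - S_n \in \q C^\infty(\m R)$; moreover $w_n(1) = \Pi_n(u_1 - S(1))$ satisfies $\|w_n(1)\|_{\q R_\alpha} \le \|u_1 - S(1)\|_{\q R_\alpha}$ uniformly in $n$.

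The core of the argument is a uniform-in-$k$ energy estimate for $a_k \|\partial_x^k w_n\|_{L^2}^2$. The equation for $w_n$ is
\[ \partial_t w_n + \partial_{xxx} w_n + \epsilon \Pi_n \partial_x F_n = 0, \qquad F_n := w_n^3 + 3 S_n w_n^2 + 3 S_n^2 w_n = w_n \cdot Q_n(w_n, S_n), \]
with $Q_n := w_n^2 + 3 S_n w_n + 3 S_n^2$. Differentiating $\|\partial_x^k w_n\|_{L^2}^2$ in $t$ and using the equation, the skew-symmetric cubic derivative integrates to zero; one integration by parts transfers a derivative from $\partial_x^{k+1} F_n$ onto $\partial_x^k w_n$, yielding
\[ \frac{1}{2} \frac{d}{dt} \|\partial_x^k w_n\|_{L^2}^2 = \epsilon \int \partial_x^{k+1} \Pi_n w_n \cdot \partial_x^k F_n \, dx. \]
Leibniz expansion, using $F_n = w_n \cdot Q_n$, writes $\partial_x^k F_n$ as a finite sum of terms $C \, \partial_x^{i_0} w_n \cdot \partial_x^{i_1} f_1 \cdot \partial_x^{i_2} f_2$ with $f_j \in \{w_n, S_n\}$ and $i_0 + i_1 + i_2 = k$, each necessarily carrying at least one derivative factor of $w_n$. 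The top-order contribution ($i_0 = k$) is $\int \partial_x^{k+1} w_n \cdot \partial_x^k w_n \cdot Q_n \, dx$; a further integration by parts turns it into $-\frac{1}{2} \int (\partial_x^k w_n)^2 \partial_x Q_n \, dx$, which is bounded by $t^{-1}(\|u_n\|_{\q E(t)}^2 + \|S_n\|_{\q E(t)}^2) \|\partial_x^k w_n\|_{L^2}^2$ via Lemma \ref{lema:decayE}. The remaining lower-order terms are handled by Cauchy--Schwarz, placing the highest-derivative $w_n$-factor (of order at most $2k+1$) in $L^2$ and the other factors in $L^\infty$ using Lemma \ref{lema:decayE}, Gagliardo--Nirenberg for the intermediate derivatives of $w_n$, and the pointwise self-similar bound $\|\partial_x^j S(t)\|_{L^\infty} \lesssim t^{-(j+1)/3}$ for the high $S$-derivatives. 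The constraint $a_k \le \alpha a_{2k+1}$ in the definition of $\q R_\alpha$ then yields $a_k \|\partial_x^{2k+1} w_n\|_{L^2}^2 \le \alpha \|w_n\|_{\q R_\alpha}^2$, so the weighted estimate closes uniformly in $k$:
\[ \frac{d}{dt} \bigl( a_k \|\partial_x^k w_n\|_{L^2}^2 \bigr) \le \frac{C (1 + \alpha)}{t} \bigl( \|u_n\|_{\q E(t)}^2 + \|S_n\|_{\q E(t)}^2 \bigr) \|w_n\|_{\q R_\alpha}^2. \]
Taking the supremum over $k \ge 0$ and applying Gronwall's inequality yields $\|w_n\|_{\q R_\alpha}$ bounded uniformly in $n$ on any compact subinterval of $I$.

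The passage to the limit then mirrors Proposition \ref{prop:persistL2}: for each $k$ and $t \in I$, $(\partial_x^k w_n(t))_n$ is bounded in $L^2$ and converges weakly to $\partial_x^k w(t)$, so lower semicontinuity of the $L^2$ norm gives $w \in L^\infty(I, \q R_\alpha)$ with the same quantitative bound. Strong continuity in time, $t \mapsto w(t) \in \q R_\alpha$, is then obtained exactly as in Proposition \ref{prop:cont_u}: the equation together with the uniform $\q R_\alpha$-bound gives weak-$L^2$ continuity of $t \mapsto \partial_x^k w(t)$ for each $k$, and the approximate energy identity combined with lower semicontinuity upgrades this to strong continuity.

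The main obstacle will be the uniform-in-$k$ closure of the energy estimate. The Leibniz expansion of the cubic nonlinearity naturally produces interaction terms involving up to $2k+1$ total derivatives, which is precisely what the structural condition $a_k \le \alpha a_{2k+1}$ on the weights is tailored to absorb. An additional subtlety is that $S$ is self-similar, so its $L^2$-based derivative norms do not behave well in the limit $n \to \infty$; the fact that every summand of $F_n$ contains at least one derivative of $w_n$ is essential here, as it allows decay to be extracted from $w_n$ (via Lemma \ref{lema:decayE}) together with the pointwise self-similar scaling of $S$, rather than from any $L^2$-control on derivatives of $S_n$.
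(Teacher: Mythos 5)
Your overall architecture (approximation by \eqref{pimkdv}, weighted energy estimates on $\partial_x^k w_n$ with the weights $a_k$, Gronwall, weak limits, and continuity at $t=1$ by weak continuity plus lower semicontinuity) coincides with the paper's. The divergence, and the gap, is in how you estimate the nonlinearity in the $k$-th energy identity. Your Leibniz expansion of $\partial_x^k F_n$ produces terms such as $w_n\, S_n\, \partial_x^{k} S_n$ in which an arbitrarily high number of derivatives falls on the self-similar factor, and you propose to control these by the ``pointwise self-similar bound $\|\partial_x^j S(t)\|_{L^\infty}\lesssim t^{-(j+1)/3}$''. That bound is false: the self-similar profile is Airy-like, its first derivative already grows like $\jap{x/t^{1/3}}^{1/4}$ as $x\to-\infty$ (this is exactly estimate \eqref{eq:est2lem1}, and it is sharp), and for $j\ge 2$ the derivatives grow polynomially in $|x|$ and are not controlled by the $\q E$ norm at all --- the Fourier transform of $S(t)$ does not decay at infinity (see \eqref{eq:ss_hf}), so $p^j\hat S(t,p)$ is neither integrable nor square-integrable for $j\ge 1$. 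For the truncated $S_n$ these quantities are finite but blow up as $n\to\infty$, so your Gronwall constant is not uniform in $n$ and the limit cannot be taken. The only object of this type that the framework controls is the product $S\,\partial_x S$ (estimate \eqref{eq:est5lem1}), which is precisely why the paper's proof never differentiates $S_n$ more than once, and only inside $\partial_x(S_n^2)$.

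The paper's way out is simpler than a Leibniz expansion: after writing $\tfrac{d}{dt}\tfrac12\|\partial_x^kw_n\|_{L^2}^2=\pm\epsilon\int\partial_x^k\bigl(\Pi_n\partial_x(u_n^3-S_n^3)\bigr)\,\partial_x^kw_n\,dx$, integrate by parts $k$ times to move \emph{all} derivatives onto the second factor, obtaining $\int\Pi_n\partial_x(u_n^3-S_n^3)\cdot\partial_x^{2k}w_n$; then bound $\|\partial_x(u_n^3-S_n^3)\|_{L^2}\lesssim(\|u_n^2\|_{W^{1,\infty}}+\|S_n^2\|_{W^{1,\infty}})\|w_n\|_{H^1}$, with $\|w_n\|_{H^1}\le\|w_n\|_{\q R_\alpha}$ since $a_0=a_1=1$, and absorb the high-order factor $\|\partial_x^{2k}w_n\|_{L^2}$ into $\|w_n\|_{\q R_\alpha}$ via the structural condition $a_k\le\alpha a_{2k+1}$. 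This requires no cancellation at top order and no derivative of $S_n$ beyond the first. Incidentally, your top-order step has a secondary flaw as written: turning $\int\partial_x^{k+1}\Pi_nw_n\cdot\partial_x^kw_n\cdot Q_n$ into $-\tfrac12\int(\partial_x^kw_n)^2\,\partial_xQ_n$ by parts ignores the commutator $[\Pi_n,Q_n]$, which does not vanish; the paper's bookkeeping never needs this cancellation. You should restructure the energy estimate along these lines rather than via the Leibniz expansion.
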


\begin{proof}
	It suffices to consider $I$ bounded. We take the solutions $u_n$ of the approximate equation \eqref{pimkdv}. Define $w_n=u_n-S_n$. It follows directly that $\partial_x^kw_n\in  \q C(I,\q L^2)$, for any $k$.  Then, integrating the equation for $w$,
	\begin{align*}
	\left|\frac{1}{2}\frac{d}{dt}\|\partial_x^kw_n\|_{L^2}^2 \right|&\lesssim \left|\int \partial_x^k\left(\Pi_n((u_n^3-S_n^3)_x)\right)\partial_x^k w_n dx \right|\\&\lesssim (\|(u_n)^2\|_{W^{1,\infty}} + \|(S_n)^2\|_{W^{1,\infty}} )\|w_n\|_{H^1}\|\Pi_n\partial_x^{2k} w_n\|_{L^2} \\&\lesssim (\|u\|_{\q E(I)}^2 + \|S\|_{\q E(I)}^2)\|w_n\|_{\q R_\alpha}\|\partial_x^{2k} w_n\|_{L^2}.
	\end{align*}
	Hence
	\begin{align*}
	\MoveEqLeft a_k \|\partial_x^k w_n(t)\|_{L^2}^2 \le a_k\|\partial_x^k w_n(1)\|_{L^2}^2 + C\int_1^t(\|u\|_{\q E(I)}^2 + \|S\|_{\q E(I)}^2)\|w_n(s)\|_{\q R_\alpha}a_k\|\partial_x^{2k} w_n(s)\|_{L^2}ds \\ 
	&\le a_k\|\partial_x^k w_n(1)\|_{L^2}^2 + C\alpha\int_1^t(\|u\|_{\q E(I)}^2 + \|S\|_{\q E(I)}^2)\|w_n(s)\|_{\q R_\alpha}a_{2k}\|\partial_x^{2k} w_n(s)\|_{L^2}ds.
	\end{align*}
	Taking the supremum in $k$ and applying Gronwall's lemma,
	\[
	\|w_n(t)\|_{\q R_\alpha}^2\le  \|w_n(1)\|_{\q R_\alpha}^2\exp\left(C\alpha(\|u\|_{\q E(I)}^2 + \|S\|_{\q E(I)}^2)|t-1|\right)
	\]
	uniformly on $n$. Taking the limit $n\to \infty$, $u-S\in L^\infty(I, \q R_\alpha)$. The uniqueness in $\q E_S$ reduces the continuity at any time to the continuity at $t=1$, which follows from the above inequality. Indeed,
	\begin{align*}
	\lim_{t\to 1}\|w(t)\|_{\q R_\alpha}^2 &\le \lim_{t\to 1}\|w_n(t)\|_{\q R_\alpha}^2 \le \lim_{t\to 1}\|w_n(1)\|_{\q R_\alpha}^2\exp\left(C\alpha(\|u\|_{\q E(I)}^2 + \|S\|_{\q E(I)}^2)|t-1|\right) \\
&= \|w_n(1)\|_{\q R_\alpha}^2,
	\end{align*}
	and, taking the limit on the right-hand side,
	\[
	\lim_{t\to 1}\|w(t)\|_{\q R_\alpha}^2\le \|w(1)\|_{\q R_\alpha}^2.
	\]
	Since $w(t)\rightharpoonup w(1)$ in $\q R_\alpha$, we obtain strong convergence in $\q R_\alpha$.
\end{proof}

We are now in a position to prove Proposition \ref{prop:stab} stated in Section 2.

\begin{proof}[Proof of Proposition \ref{prop:stab}]
	Suppose that $u=S+w$, with $S$ self-similar and $w\in \q R_\alpha$, is defined on an interval $I\subset (0,1]$. Then, setting $z:=u^2+uS + S^2$, $(\partial_t + \partial_{xxx})w= -\epsilon (zw)_x$ and, by Sobolev embedding,
	\[
	\|z\|_{L^\infty}\lesssim \|w\|_{L^\infty}^2 + \|S\|_{L^\infty}^2 \lesssim \|w\|_{\q R_\alpha}^2 + t^{-2/3}\|S\|_{\q E(1)}^2.
	\]
	
	The following formal computations can be made rigorous by approximating with solutions of \eqref{pimkdv}. Integrating directly the equation for $w$,
	
	\begin{align*}
	\left|\frac{d}{dt}\|\partial_x^k w\|_{L^2}^2\right|&\lesssim \left|\int \partial_x^{k+1}(zw)\partial_x^kwdx\right|\lesssim \|z\|_{L^\infty}\|w\|_{L^2}\|\partial_x^{2k+1} w\|_{L^2}\\
	&\lesssim (\|w\|_{\q R_\alpha(I)}^2 + t^{-2/3}\|S\|_{\q E(1)}^2)\|w\|_{L^2}\|\partial_x^{2k+1} w\|_{L^2}
	\end{align*}
	and so, for any $t\in I$,
	\begin{align*}
	a_k\|\partial_x^k w(t)\|_{L^2}^2&\lesssim \|w(1)\|_{\q R_\alpha}^2 + \frac{a_k}{a_{2k+1}}(\|w\|_{\q R_\alpha(I)}^2 + \|S\|_{\q E(1)}^2)\|w\|_{\q R_\alpha(I)}^2\\&\lesssim \|w(1)\|_{\q R_\alpha}^2 + \alpha(\|w\|_{\q R_\alpha(I)}^2 + \|S\|_{\q E(1)}^2)\|w\|_{\q R_\alpha(I)}^2.
	\end{align*}
	These estimates imply
	\begin{align*}
	\|w\|_{\q R_\alpha(I)}^2\lesssim \|w(1)\|_{\q R_\alpha}^2 + \alpha(\|w\|_{\q R_\alpha(I)}^2 + \|S\|_{\q E(1)}^2)\|w\|_{\q R_\alpha(I)}^2.
	\end{align*}
	Therefore, if $\|w(1)\|_{\q R_\alpha}^2, \alpha\|S\|_{\q E(1)}^2< \delta$ sufficiently small, 
	\[
	\| w \|_{\q R_\alpha(I)}^2\lesssim \delta.
	\]
	On the other hand, since $\mathcal{I}w=\mathcal{I}u$ and $(\partial_t + \partial_{xxx})\mathcal{I}u = -3 \epsilon u^2(\mathcal{I}u)_x$,
	\[
	\left|\frac{1}{2}\frac{d}{dt}\|\mathcal{I}w\|_{L^2}^2\right|\lesssim \left| \int u^2(\mathcal{I}w)_x\mathcal{I}w \right| \lesssim \|uu_x\|_{L^\infty}\|\mathcal{I}w\|_{L^2}^2 \lesssim (\|w\|_{\q R_\alpha(I)}^2 + \frac{1}{t}\|S\|_{\q E(I)}^2)\|\mathcal{I}w\|_{L^2}^2.
	\]
	The integration of this inequality implies that $ \|\mathcal{I}u\|_{L^2}^2$ cannot blow-up in $I$. Finally, since 
	\[ \partial_p \tilde{w} = \widehat{\mathcal{I}w} + 3t \widehat{zw}, \]
	we estimate
	\begin{align*}
	\|\tilde{w}\|_{L^\infty}&\lesssim \|\tilde{w}\|_{L^2}^{1/2}\|\partial_{p}\tilde{w}\|_{L^2}^{1/2} \lesssim \|w\|_{\q R_\alpha(I)}^{1/2}\left(\|\mathcal{I}w\|_{L^2} + t\|z\|_\infty\|w\|_{L^2}\right)^{1/2}\\&\lesssim \|w\|_{\q R_\alpha(I)}^{1/2}\left(\|\mathcal{I}w\|_{L^2} + t(\|w\|_{\q R_\alpha}^2 + t^{-2/3}\|S\|_{\q E(1)}^2)\|w\|_{L^2}\right)^{1/2}.
	\end{align*}
	By Proposition \ref{prop:blowupaltern}, $u$ cannot blow-up at any $t>0$ and the result follows.
\end{proof}

\begin{nb}
	As shown in Lemma \ref{lema:decayE}, the space $\q E$ controls $\|w^2\|_{W^{1,\infty}}$. The main difficulty when one studies the limit $t\to 0^+$ is that the corresponding estimate includes terms behaving as $O(t^{-1})$. The key argument in the proof of stability is the control of  $\|w^2\|_{W^{1,\infty}}$ using the $\q R_\alpha$ norm, which does not depend on time.
\end{nb}

\begin{nb}
	In Proposition \ref{prop:blowupgivenprofile}, we built solutions defined on $(0,1]$ which blow up at $t=0$. One may then ask if these solutions are also stable under $\q R_\alpha$-perturbations. However, in the above proof, the nullity of $(\mathcal{I}S)_x$ is essential in order to close the estimate for $\mathcal{I}w$.
\end{nb}

\appendix

\section{Asymptotic development of the nonlinearity}

\begin{proof}[Proof of Lemma \ref{lem:desenvolveoscilatorio}]
The goal is to obtain the right asymptotics for 
\begin{equation}
\cal N[u](t,p)=p^3\iint_{q_1+q_2+q_3=1} e^{-itp^3(1-q_1^3-q_2^3-q_3^3)}f(t,pq_1)g(t,pq_2)h(t,pq_3)dq_1dq_2
\end{equation}
assuming that $f$ (and \emph{mutatis mutandis} $g,h$) satisfies
\[ 
f(t,p)=\overline{f(t,-p)},\quad \|f\|_{L^\infty} + t^{-\frac{1}{6}}\|\partial_p f\|_{L^2((0,+\infty))} = \|f\|<\infty.
\]
The usual stationary phase arguments either use high regularity assumptions or that all the functions involved have enough spatial decay (specifically $L^2$, in order to apply Parseval's identity), which fail in our setting: as mentioned earlier, the way the computations are performed is critical in order to close the argument with suitable bounds. Before we proceed, let us first explain the ideas of the computations, and start with the main order term. The phase 
\[
Q:= -(1-q_1^3-q_2^3-q_3^3)
\] has four stationary points:
\[
\left(\frac{1}{3},\frac{1}{3},\frac{1}{3}\right),\quad (1,1,-1),\quad (-1,1,1),\quad (1,-1,1).
\]
The last three are connected through the symmetry between $q_1,q_2$ and $q_3$. Then we split the domain of integration using three cutoff function $\phi_j$ with $\phi_1+\phi_2+\phi_3=1$ such that the support of $\phi_j$ does not include the stationary points with $q_k=-1$, $k\neq j$. For example, one may choose
\[
\phi_3\equiv 1\text{ if }q_2>1/6 \text{ and }q_3<1/2,\quad \phi_3\equiv 0\text{ if }q_2<1/12\text{ or }q_3>2/3
\]
and define $\phi_1$, $\phi_2$ in an analogous way. Therefore, without loss of generality,
 in order to study the asymptotics for $\cal N$, it suffices to consider
\[
p^3\iint_{q_1+q_2+q_3=1} e^{itp^3Q}f(t,pq_1)g(t,pq_2)h(t,pq_3)\phi(q_1,q_2)dq_1dq_2,
\]
were $\phi:=\phi_3$ and the relevant stationary points are
\[
\left(\frac{1}{3},\frac{1}{3},\frac{1}{3}\right),\quad (1,1,-1).
\]
If the general stationary phase argument was applicable, then
\begin{align*}
\cal N(t,p)&=k_1(t,p)e^{-\frac{8itp^3}{9}}f\left(t,\frac{p}{3}\right)g\left(t,\frac{p}{3}\right)h\left(t,\frac{p}{3}\right) + k_2(t,p) f(t,p)g(t,p)\overline{h(t,p)} \\&\qquad+ \text{remainder.}
\end{align*}
If one takes smooth cutoff functions around the stationary points $\psi_{1/3}$ and $\psi_1$, then the stationary phase argument for smooth functions (see, for example, \cite{F89}) implies that, up to a small remainder,
\begin{align*}
\MoveEqLeft \frac{1}{3}k_1(t,p)e^{-\frac{8itp^3}{9}}f\left(t,\frac{p}{3}\right)g\left(t,\frac{p}{3}\right)h\left(t,\frac{p}{3}\right)  \\
& = f\left(t,\frac{p}{3}\right)g\left(t,\frac{p}{3}\right)h\left(t,\frac{p}{3}\right) \left(p^3\iint_{q_1+q_2+q_3=1}e^{itp^3Q} \psi_{1/3}(q_1,q_2)\phi(q_1,q_2) dq_1dq_2\right),
\end{align*}
and
\begin{align*}
\MoveEqLeft k_2(t,p)f(t,p)g(t,p)\overline{h(t,p)} \\
& =  f(t,p)g(t,p)\overline{h(t,p)} \left(p^3\iint_{q_1+q_2+q_3=1}e^{itp^3Q} \psi_{1}(q_1,q_2)\phi(q_1,q_2) dq_1dq_2\right).
\end{align*}

This implies that the remainder in the stationary phase argument is given by
\begin{equation}\label{eq:resto}
p^3\iint_{q_1+q_2+q_3=1} e^{itp^3Q}\Phi dq_1dq_2
\end{equation}
where
\begin{align*}
\Phi &=f(t,pq_1)g(t,pq_2)h(t,pq_3) - f(t,p)g(t,p)\overline{h(t,p)}\psi_{1}(q_1,q_2) \\&\qquad- f\left(t,\frac{p}{3}\right)g\left(t,\frac{p}{3}\right)h\left(t,\frac{p}{3}\right)\psi_{1/3}(q_1,q_2).
\end{align*}
We now explain the main ideas behind the computations for the remainder.

The function $\Phi$, due to the fact that $f,g$ and $h$ are Hölder continuous of degree $1/2$, satisfies
\[ |\Phi|\lesssim p^{1/2}t^{1/6}(\sqrt{|q_1-1/3|} + \sqrt{|q_2-1/3|}) \]
and
\[ |\Phi|\lesssim p^{1/2}t^{1/6}(\sqrt{|q_1-1|} + \sqrt{|q_2-1|}). \]
There are three regions of integration: 
\begin{itemize}
\item the inner region: $q_3>1/6$,
\item the middle region: $-2<q_3<1/5$,
\item the outer region: $q_3<-3/2$.
\end{itemize}
In the first and the second regions, we are close to a stationary point and we shall use the Hölder estimate. In the third region, $q_1$ and $q_2$ are large, meaning that we are far from any possible singularity coming from integration by parts. The splitting of the integral into these three regions can be accomplished by using appropriate cut-off functions; however, to simplify the exposition of the proof, we omit these terms.

\bigskip

Instead of applying the usual relation $itp^3(\partial Q)e^{itp^3Q}=\partial(e^{itp^3Q})$, we use
\begin{equation}
	Ke^{itp^3Q}=\frac{1}{1+itp^3K\partial Q}\partial(Ke^{itp^3Q}),\quad \partial K\equiv 1,\quad K=0\text{ at the stationary point}.
\end{equation}
	The introduction of $K$ leads to some simplifications: firstly, there is no singularity appearing in the integration by parts; second, the $K$ in the numerator will add some degeneracy.

\bigskip

The required decay has to come from two integration by parts (one integration eliminates the $p^3$ factor but does not show decay). This has to be done carefully, since $f,g$ and $h$ cannot be differentiated more than once. The key fact is that one may differentiate, for example, 
\[ f_p(pq_1)g(pq_2)h(pq_3) \]
 in the $q_2$ (or $q_3$) direction. Therefore, the two required integration by parts are made in different directions, so that no second derivatives of $f$ appear.

Even though one could perform all the computations in the $q_1,q_2$ coordinates, we introduce some linear change of variables so that it becomes clearer in which direction we integrate by parts and which terms are irrelevant in each region. For example, we shall say that $q_1$ is irrelevant on the middle region and throw it away when taking absolute values in the integrand.

\bigskip

We now bound the remainder terms in detail. Throughout this proof, $\tau=tp^3$ and $p>0$. 
	Consider the change of variables
\[
	1-q_1=\lambda- \mu ,\quad 1-q_2= \lambda +  \mu ,\quad 1-q_3=2(1-\lambda).
\]
	Notice that both stationary points satisfy $ \mu =0$. We now use the relation
\[
	e^{i\tau Q}=\frac{1}{1+4i\tau  \mu ^2(1-\lambda)}\partial_ \mu ( \mu e^{i\tau Q})
\]
	and integrate by parts \eqref{eq:resto}:
\begin{align*}
\iint e^{i\tau Q}\Phi dq_1dq_2 &= \iint e^{i\tau Q}\Phi \frac{8i\tau  \mu ^2(1-\lambda)}{(1+4i\tau  \mu ^2(1-\lambda))^2}dq_1dq_2 \\
& \quad + \iint e^{i\tau Q}\Phi_{q_1}\frac{ \mu }{1+4i\tau  \mu ^2(1-\lambda)}dq_1dq_2 \\
& \quad - \iint e^{i\tau Q}\Phi_{q_2}\frac{ \mu }{1+4i\tau  \mu ^2(1-\lambda)}dq_1dq_2 \\
& = M_1 + M_2 - M_3.
\end{align*}
The estimate for $M_3$ follows from similar computations as those for $M_2$. 

We will bound $M_1$ and $M_2$ in separately, and depending whether $\tau$ is less or greater than 1,  in the four claims below.

Let us focus first of $M_1$. We take $\eta= \mu \sqrt{1-\lambda}$ and use, for a fixed $\lambda_0\in\{0,2/3\}$, we ahve
\[
	e^{i\tau Q}=\frac{1}{1 + 2i\tau (\lambda-\lambda_0)\lambda(2-3\lambda)}\partial_\lambda((\lambda-\lambda_0)e^{i\tau Q}).
\]
Hence
\begin{align*}
M_1 & = \iint e^{i\tau Q}\Phi \frac{8i\tau  \mu ^2(1-\lambda)}{(1+4i\tau  \mu ^2(1-\lambda))^2}dq_1dq_2= \iint e^{i\tau Q} \Phi \frac{8i\tau \eta^2}{(1+4i\tau \eta^2)^2} \frac{d\eta d\lambda}{\sqrt{1-\lambda}} \\
& = \iint e^{i\tau Q}\left[ \Phi_{q_1}\left(1+\frac{ \mu }{2(1-\lambda)}\right) + \Phi_{q_2}\left(1-\frac{ \mu }{2(1-\lambda)}\right) -2\Phi_{q_3}\right] \\&\qquad\qquad\times\frac{8i\tau (\lambda-\lambda_0) \eta^2d\lambda d\eta}{(1+4i\tau\eta^2)^2(1-\lambda)^{1/2}(1 + 2i\tau (\lambda-\lambda_0)\lambda(2-3\lambda))} \\
& \quad + \iint e^{i\tau Q}\Phi \left(-\frac{1}{2\sqrt{(1-\lambda)}} + \frac{2i\tau\left( \lambda(2-3\lambda) + (\lambda-\lambda_0)(2-3\lambda) - 3(\lambda-\lambda_0)\lambda \right)}{1+3i\tau(\lambda-\lambda_0)\lambda(2-3\lambda)}\right) \\
& \qquad \times\frac{8i\tau (\lambda-\lambda_0) \eta^2d\lambda d\eta}{(1+4i\tau\eta^2)^2(1-\lambda)^{1/2}(1 + 2i\tau (\lambda-\lambda_0)\lambda(2-3\lambda))}.
\end{align*}

\begin{claim}
For $\tau \ge 1$, we have the bound on $M_1$:
\[ |M_1| \lesssim \tau^{-13/12}. \]
\end{claim}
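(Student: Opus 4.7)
The plan is to rescale the oscillatory variables so that the natural stationary-phase scales become dimensionless, and then combine the resulting decay with the near-stationary-point smallness of $\Phi$. Setting $y=\sqrt{\tau}\eta$ and $z=\sqrt{\tau}(\lambda-\lambda_0)$ turns $(1+4i\tau\eta^2)^2$ into $(1+4iy^2)^2$; and since $(\lambda-\lambda_0)\partial_\lambda Q$ vanishes to second order at the stationary points (a direct consequence of $\partial_\lambda Q = 6[(3\lambda-2)\lambda-\mu^2]$ vanishing at $\mu=0$, $\lambda_0\in\{0,2/3\}$), the denominator $1+2i\tau(\lambda-\lambda_0)\lambda(2-3\lambda)$ rescales to $1+iz^2\cdot\text{bounded smooth}$. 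The measure becomes $d\eta\,d\lambda=dy\,dz/\tau$, the numerator $\tau(\lambda-\lambda_0)\eta^2$ becomes $zy^2/\sqrt{\tau}$, and the cutoff $\phi$ keeps $1-\lambda\gtrsim 1$ on its support, so the $1/\sqrt{1-\lambda}$ factor is harmless. Because $\tau=tp^3$, the Hölder-$1/2$ estimate $|\Phi|\lesssim p^{1/2}t^{1/6}(\sqrt{|\mu|}+\sqrt{|\lambda-\lambda_0|})$ reads $\tau^{1/6}(\sqrt{|\mu|}+\sqrt{|\lambda-\lambda_0|})$, contributing a factor $\tau^{-1/12}$ beyond the usual $\tau^{-1}$ stationary-phase gain --- the structural source of the exponent $-13/12$.

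For the second half of $M_1$ (involving $\Phi$ directly rather than its derivatives), the bracketed term $\tfrac{2i\tau[\lambda(2-3\lambda)+(\lambda-\lambda_0)(2-3\lambda)-3(\lambda-\lambda_0)\lambda]}{1+2i\tau(\lambda-\lambda_0)\lambda(2-3\lambda)}$ has a numerator vanishing to first order in $\lambda-\lambda_0$ at the stationary point, which combined with the denominator yields an $O(\sqrt{\tau})$ size at the stationary-phase scale; after rescaling, this cancels the extra $\sqrt{\tau}$ from the numerator rearrangement, and plugging the pointwise Hölder estimate into the rescaled integrand reduces the bound to the absolutely convergent integral
\[ \tau^{-13/12}\iint\frac{(|y|^{1/2}+|z|^{1/2})\,|z|\,y^2}{(1+y^2)^2\,|1+iz^2 C|}\,dy\,dz, \]
which gives exactly the required $\tau^{-13/12}$. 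The subleading term proportional to $1/\sqrt{1-\lambda}$ (lacking the $\sqrt{\tau}$ enhancement) yields an even smaller contribution.

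For the first half of $M_1$, involving $\Phi_{q_j}$, pointwise bounds fail because $\partial_p f,\partial_p g,\partial_p h\in L^2$ only, so I would split into the inner ($q_3>1/6$), middle ($-2<q_3<1/5$), and outer ($q_3<-3/2$) regions as indicated in the excerpt. In the inner (near $(1/3,1/3,1/3)$) and middle (near $(1,1,-1)$) regions, the subtraction of the smooth cutoff-localized main terms in the definition of $\Phi$ forces $\Phi_{q_j}$ itself to inherit Hölder-type smallness near the stationary points through its cutoff factors, and combined with the two IBP weights and the rescaling the $\tau^{-13/12}$ bound follows analogously. In the outer region, the phase $Q$ is genuinely non-stationary in $(q_1,q_2)$, so one can perform one further integration by parts in the $q_2$ or $q_3$ direction --- chosen to avoid a second derivative on the function already differentiated, per the observation at the start of the proof --- producing an additional $1/\tau$; combined with the pointwise $L^\infty$ norms of the remaining factors, this comfortably beats $\tau^{-13/12}$.

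The main obstacle, in my view, is the careful bookkeeping across regions: one must choose Cauchy--Schwarz coordinates adapted separately to each term $\Phi_{q_j}$, track the Jacobians from $(q_1,q_2)$ to $(\lambda,\mu)$ and further to the rescaled $(y,z)$, and verify uniformity between the two stationary points $\lambda_0\in\{0,2/3\}$, whose Hessians have different signatures and yield different signs/sizes in the smooth coefficients. Boundary terms from the two integrations by parts must also be shown to either vanish (from the cutoffs $\phi$ and $\psi_{1/3},\psi_1$) or contribute at a rate compatible with $\tau^{-13/12}$.
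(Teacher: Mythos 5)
Your treatment of the second half of $M_1$ (the terms carrying $\Phi$ itself) is sound and matches the paper in substance: the rescaling $y=\sqrt\tau\,\eta$, $z=\sqrt\tau(\lambda-\lambda_0)$ is just a repackaging of the direct evaluation of the kernel integrals, the observation that the bracketed numerator vanishes to first order at $\lambda=\lambda_0$ is correct, and the heuristic $\tau^{-1}\cdot\tau^{-1/12}$ correctly locates the source of the exponent $-13/12$.

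The gap is in the first half, the terms carrying $\Phi_{q_1},\Phi_{q_2},\Phi_{q_3}$, which is precisely where the low regularity bites. You assert that ``the subtraction of the smooth cutoff-localized main terms forces $\Phi_{q_j}$ itself to inherit H\"older-type smallness near the stationary points.'' This is false. The subtraction makes $\Phi$ vanish at the stationary points (via the H\"older-$1/2$ continuity of $f,g,h$), but differentiating destroys this: $\Phi_{q_1}$ contains the term $p\,\partial_pf(t,pq_1)\,g(t,pq_2)\,h(t,pq_3)$, and $\partial_p f$ is only controlled in $L^2((0,+\infty))$ --- it has no pointwise bound at all and may concentrate exactly at the stationary point. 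No pointwise smallness of $\Phi_{q_j}$ is available, so the ``follows analogously'' step collapses. The correct mechanism, and the one the paper uses, is Cauchy--Schwarz in the variable carrying the derivative: $\|\Phi_{q_j}\|_{L^2}\lesssim p\cdot p^{-1/2}\|\partial_pf\|_{L^2}\lesssim p^{1/2}t^{1/6}=\tau^{1/6}$, paired against the $L^2$ norm of the oscillatory kernel in that variable and the $L^1$ norm in the other, giving
\[
\tau^{1/6}\int\frac{\tau\eta^2\,d\eta}{|1+i\tau\eta^2|^2}\left(\int\frac{\lambda^2\,d\lambda}{|1+i\tau\lambda^2|^2}\right)^{1/2}\lesssim\tau^{1/6}\cdot\tau^{-1/2}\cdot\tau^{-3/4}=\tau^{-13/12}.
\]
You do mention ``Cauchy--Schwarz coordinates adapted to each $\Phi_{q_j}$'' in your closing paragraph on obstacles, but you never carry it out, and the argument you actually give in its place does not work. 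A secondary point: in the outer region no further integration by parts is needed --- the denominators already contribute $|1+2i\tau\lambda^2(2-3\lambda)|\gtrsim\tau|\lambda|^3$, and the paper simply combines this decay with Cauchy--Schwarz for the $\Phi_{q_j}$ and for the $|\eta|/|\lambda|^{3/2}$ factors. Your proposed extra integration by parts would generate boundary terms and a product of two $L^2$ derivatives in different variables that you do not control, and ``pointwise $L^\infty$ norms of the remaining factors'' is again unavailable for $\partial_pf$.
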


\begin{proof}
Bounds in the inner region: here we choose $\lambda_0=0$. Since 
\[	2-3\lambda, \ 1-\lambda, \ 1\pm \frac{ \mu }{2(1-\lambda)}\text{ are irrelevant,} \]
a direct bound on $M_1$ yields
\begin{align*}
|M_1| & \lesssim \iint \frac{|\tau \eta^2 \lambda|}{|1+i\tau \eta^2|^2|1+i\tau \lambda^2|}(|\Phi_{q_1}| + |\Phi_{q_2}| + |\Phi_{q_3}|)d\lambda d\eta \\
& \quad + \iint |\Phi|\left(\frac{|\tau \lambda\eta^2|}{|1+i\tau \lambda^2||1+i\tau \eta^2|^2} + \frac{|\tau \lambda^2||\tau\eta^2|}{|1+i\tau \lambda^2|^2|1+i\tau \eta^2|^2}\right)d\lambda d\eta.
\end{align*}
The first term is bounded by Cauchy-Schwarz:
\begin{align*}
\MoveEqLeft \iint \frac{|\tau \eta^2 \lambda|}{|1+i\tau \eta^2|^2|1+i\tau \lambda^2|}(|\Phi_{q_1}| + |\Phi_{q_2}| + |\Phi_{q_3}|)d\lambda d\eta \\
&\lesssim  \tau^{1/2} \int \frac{\tau\eta^2d\eta}{|1+i\tau\eta^2|^2}\left(\int \frac{\lambda^2d\lambda}{|1+i\tau \lambda^2|^2}\right)^{1/2} \\
&  \lesssim \tau^{-13/12}.
\end{align*}
For the second and the third term, we use the Hölder estimate for $|\Phi|$:
\[ |\Phi|\lesssim \tau^{1/6}(\sqrt{|\lambda|} + \sqrt{|\eta|}) \]
and obtain
\begin{align*}
\iint |\Phi|\frac{|\tau \lambda\eta^2| d\lambda d\eta}{|1+i\tau \lambda^2||1+i\tau \eta^2|^2} \lesssim \tau^{1/6}\iint \frac{|\lambda |^{3/2}|\tau\eta^2| + |\tau \lambda ||\eta|^{5/2}}{|1+i\tau \lambda ^2||1+i\tau\eta^2|^2}d\lambda d\eta \lesssim \tau^{-13/12}
\end{align*}
and
\begin{align*}
\iint |\Phi|\frac{|\tau \lambda ^2||\tau\eta^2| d\lambda d\eta}{|1+3i\tau \lambda ^2|^2|1+i\tau \eta^2|^2} \lesssim \tau^{1/6}\iint \frac{|\tau|^2(|\lambda |^{5/2}|\eta|^2 + |\lambda |^2|\eta|^{5/2})}{|1+i\tau \lambda ^2|^2|1+i\tau \eta^2|^2}d\lambda d\eta \lesssim {\tau^{-13/12}}.
\end{align*}
	
Bounds in the middle region: here we take $\lambda _0=2/3$. Since
\[ \lambda , 1-\lambda , 1\pm \frac{ \mu }{2(1-\lambda )}\text{ are irrelevant,} \]
a direct bound on $M_1$ yields
\begin{align*}
|M_1|&\lesssim \iint \frac{|\tau \eta^2 (\lambda -2/3)|}{|1+i\tau \eta^2|^2|1+i\tau (\lambda -2/3)^2|}(|\Phi_{q_1}| + |\Phi_{q_2}| + |\Phi_{q_3}|)d\lambda d\eta \\&+ \iint |\Phi|\left(\frac{|\tau (\lambda -2/3)\eta^2|}{|1+i\tau (\lambda -2/3)^2||1+i\tau \eta^2|^2} + \frac{|\tau (\lambda -2/3)^2||\tau\eta^2|}{|1+i\tau (\lambda -2/3)^2|^2|1+i\tau \eta^2|^2}\right)d\lambda d\eta.
	\end{align*}
	and the estimate follows as in the inner region.
	
Bounds in the outer region: we consider $\lambda _0=0$ and use
\[
	1\pm \frac{ \mu }{2(1-\lambda )}\sim 1\pm \frac{\eta}{|\lambda |^{3/2}},\quad 1+3\lambda \sim \lambda ,\quad 1+3i\tau \lambda ^2\sim \tau \lambda ^2
\]
	to obtain
	\begin{align*}
	|M_1|\lesssim \iint \frac{|\lambda \eta^2|}{|1+i\tau \eta^2|^2|\lambda |^{7/2}}\left(\left(1+\frac{|\eta|}{|\lambda |^{3/2}}\right)(|\Phi_{q_1}| + |\Phi_{q_2}|) + |\Phi_{q_3}| + \frac{1}{|\lambda |^{1/2}}|\Phi|\right)\lesssim \tau^{-13/12},
	\end{align*}
	where the terms with $|\eta|/|\lambda |^{3/2}$ is taken care with Cauchy-Schwarz in the $\eta$-variable.
\end{proof}

\begin{claim} \label{claim:M1_tau<1}
For $\tau <1$, we have the bound on $M_1$
\[ |M_1| \lesssim \tau^{-5/6}. \]
\end{claim}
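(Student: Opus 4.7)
The plan is to estimate $M_1$ via the same three-region decomposition as in the previous claim, but with substantially simpler bookkeeping: since $\tau<1$ the target $\tau^{-5/6}$ is much weaker than $\tau^{-13/12}$, so the second integration by parts in $\lambda$ is only needed in the outer region, and there only to recover $\lambda$-integrability rather than smallness in $\tau$.

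As the first step, I would rewrite $M_1$ via the change of variables $\eta=\mu\sqrt{1-\lambda}$ so that the peaked weight $\tau\mu^2(1-\lambda)/(1+4i\tau\mu^2(1-\lambda))^2$ becomes $\tau\eta^2/(1+4i\tau\eta^2)^2$. The elementary estimates
\[
\int_{\m R}\frac{\tau\eta^2\, d\eta}{1+16\tau^2\eta^4}\lesssim\tau^{-1/2},\qquad \int_{\m R}|\eta|^{1/2}\frac{\tau\eta^2\, d\eta}{1+16\tau^2\eta^4}\lesssim\tau^{-3/4}
\]
are the workhorse bounds. In the inner and middle regions $\lambda$ lies in a compact set and the Hölder bound $|\Phi|\lesssim\tau^{1/6}(\sqrt{|\lambda-\lambda_*|}+\sqrt{|\eta|})$ holds near the respective stationary value $\lambda_*\in\{0,2/3\}$; combined with the two estimates above, this yields a contribution $\lesssim\tau^{1/6}\cdot\tau^{-3/4}=\tau^{-7/12}$, dominated by $\tau^{-5/6}$ for $\tau<1$.

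The outer region $\lambda<-1/4$ is the delicate step, because $\Phi$ is only uniformly bounded and the Jacobian $(1-\lambda)^{-1/2}\sim|\lambda|^{-1/2}$ is not $\lambda$-integrable at $-\infty$. Here I would apply the second integration by parts in $\lambda$ with $\lambda_0=0$, exactly as in Claim~1, and then split at the threshold $|\lambda|\sim\tau^{-1/3}$. On $1/4<|\lambda|<\tau^{-1/3}$ the denominator $|1+2i\tau\lambda^2(2-3\lambda)|$ is $O(1)$ and supplies no $\tau$-smallness, but the $\eta$-integration combined with the $|\lambda|^{-1/2}$ Jacobian gives a contribution $\lesssim \tau^{-1/2}\cdot(\tau^{-1/3})^{1/2}=\tau^{-2/3}$; on $|\lambda|>\tau^{-1/3}$ the same denominator is $\sim\tau|\lambda|^3$, supplying the decay needed for integrability at infinity and again yielding $\tau^{-2/3}$. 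The $\partial_{q_k}\Phi$ pieces are handled by Cauchy--Schwarz as in the inner-region computation of Claim~1, using $\|\partial_p\tilde u\|_{L^2}\lesssim t^{1/6}\|u\|_{\q E(t)}$.

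The main obstacle is this outer splitting at $|\lambda|\sim\tau^{-1/3}$, which has no analogue in the $\tau\ge1$ proof (where $|1+2i\tau\lambda^2(2-3\lambda)|\sim\tau|\lambda|^3$ uniformly on $|\lambda|\gtrsim 1$). One must reconcile the two sub-regions carefully and check that the boundary contribution from the second IBP, as well as the $\partial_\lambda\Phi$ term, remain under control in each one. The resulting bound is in fact stronger than the advertised $\tau^{-5/6}$, but the latter is precisely what is needed to interpolate continuously with the $\tau^{-13/12}$ bound of the previous claim into the single uniform estimate $|R[u](t,p)|\lesssim p^3\,\tau^{-5/6}\jap{\tau}^{-1/4}\,\|u\|_{\q E(t)}^3$ asserted in Lemma~\ref{lem:desenvolveoscilatorio}.
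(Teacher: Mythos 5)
Your strategy is essentially the paper's: trivial (or H\"older) bounds in the inner and middle regions, where compactness of the $\lambda$-support makes everything $O(\tau^{-1/2})$ or better, and a second integration by parts with $\lambda_0=0$ in the outer region, keeping the full denominator $1+2i\tau\lambda^2(2-3\lambda)$ and splitting the $\lambda$-integral at $|\lambda|\sim\tau^{-1/3}$. One correction, though: your outer-region power count is too optimistic. The dominant contribution there is the $\Phi$-term multiplied by $\partial_\lambda(1-\lambda)^{-1/2}$, whose $\lambda$-dependence after inserting the numerator factor $|\lambda-\lambda_0|=|\lambda|$ and the Jacobian is
\[
\frac{|\lambda|}{|\lambda|^{1/2}\,|1+i\tau\lambda^3|}\cdot\frac{1}{|\lambda|^{1/2}}=\frac{1}{|1+i\tau\lambda^3|},
\]
and $\int_{|\lambda|\gtrsim 1}|1+i\tau\lambda^3|^{-1}d\lambda\sim\tau^{-1/3}$ (both sub-regions contribute at this order); combined with the $\eta$-integral $\tau^{-1/2}$ this gives exactly $\tau^{-5/6}$, not $\tau^{-2/3}$. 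So your final assertion that the bound obtained is strictly stronger than $\tau^{-5/6}$ is false -- the estimate is sharp at $\tau^{-5/6}$ in this computation -- but since the claim only asserts $|M_1|\lesssim\tau^{-5/6}$, the proof as a whole still closes.
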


\begin{proof}
In the outer region, we take $\lambda _0=0$ and estimate
\begin{align*}
|M_1| & \lesssim \iint \frac{|\lambda \eta^2|}{|1+i\tau \eta^2|^2|\lambda |^{1/2}|1+i\tau \lambda ^3|}\left(\left(1+\frac{|\eta|}{|\lambda |^{3/2}}\right)(|\Phi_{q_1}| + |\Phi_{q_2}|) + |\Phi_{q_3}| + \frac{1}{|\lambda |^{1/2}}|\Phi|\right) \\
& \lesssim \tau^{-5/6}.
\end{align*}
In the inner and middle regions, we simply use the fact that $\Phi$ is bounded and conclude to a better bound than needed:
\[ \left|\iint e^{itp^3Q}\Phi dq_1dq_2\right| \lesssim 1. \qedhere \]
\end{proof}

\begin{claim} \label{claim:M2_tau>1}
For $\tau \ge 1$, we have the bound on $M_2$:
\[ |M_2| \lesssim \tau^{-13/12}. \]
\end{claim}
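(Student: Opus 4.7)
The plan is to imitate, as closely as possible, the $\tau\ge 1$ argument for $M_1$ carried out just above. I would split the domain of integration, using the same partition of unity, into an inner region ($q_3>1/6$), a middle region ($-2<q_3<1/5$) and an outer region ($q_3<-3/2$), omitting the cut-off functions from the notation. After passing to the coordinates $(\lambda,\eta)$ with $\eta=\mu\sqrt{1-\lambda}$ to rewrite
\[
M_2 = 2\iint e^{i\tau Q}\,\Phi_{q_1}\,\frac{\eta}{(1-\lambda)(1+4i\tau\eta^2)}\,d\lambda\,d\eta,
\]
the key step is a second integration by parts, in the $\lambda$-direction, relying on
\[
e^{i\tau Q}=\frac{1}{1+2i\tau(\lambda-\lambda_0)\lambda(2-3\lambda)}\,\partial_\lambda\bigl((\lambda-\lambda_0)e^{i\tau Q}\bigr),
\]
with the same choices $\lambda_0=0$ on the inner and outer regions and $\lambda_0=2/3$ on the middle region.

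This IBP produces two sorts of terms: those in which $\partial_\lambda$ falls on the smooth amplitude (made of the two denominators, the Jacobian factor $(1-\lambda)^{-1}$, and the cut-offs), and a single term in which $\partial_\lambda$ falls on $\Phi_{q_1}$. For the amplitude terms, the integrands have exactly the same structure as those arising in the $M_1$ computation, with $\Phi$ replaced by $\Phi_{q_1}$. I would therefore bound $|\Phi_{q_1}|\lesssim p\,\|f\|\,\|g\|\,\|h\|$ pointwise wherever Hölder continuity was used for $\Phi$, and $\|\Phi_{q_1}\|_{L^2_{q_1}}\lesssim p^{1/2}\,\|\partial_p f\|_{L^2((0,+\infty))}\,\|g\|\,\|h\|$ wherever a Cauchy--Schwarz in the $q_1$ variable was used for $\Phi$. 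The same region-by-region computations as for $M_1$ then give the desired $\tau^{-13/12}$ bound from these pieces.

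The main obstacle, and the step I expect to be most delicate, is the remaining term, where $\partial_\lambda$ lands on $\Phi_{q_1}$: a naive application of the chain rule yields
\[
\partial_\lambda|_\eta\Phi_{q_1}=\Bigl(-1+\tfrac{\mu}{2(1-\lambda)}\Bigr)\Phi_{q_1q_1}+\Bigl(-1-\tfrac{\mu}{2(1-\lambda)}\Bigr)\Phi_{q_1q_2}+2\,\Phi_{q_1q_3},
\]
and the $\Phi_{q_1q_1}$ piece contains $f''(pq_1)$, which is not controlled by our norm (only $\partial_p f\in L^2((0,+\infty))$ is at our disposal). The remedy is to observe the algebraic identity $\Phi_{q_1q_1}=\partial_\mu\Phi_{q_1}+\Phi_{q_1q_2}$ (from $\partial_\mu q_1=1,\partial_\mu q_2=-1,\partial_\mu q_3=0$), and to perform one further integration by parts in $\mu$ on the $\partial_\mu\Phi_{q_1}$ piece. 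This transfers the $\mu$-derivative onto either the smooth amplitude (giving a $\Phi_{q_1}$-contribution of the same type as already bounded) or onto $e^{i\tau Q}$, in which case the extra factor $i\tau\partial_\mu Q=12i\tau\mu(1-\lambda)$ is absorbed by the kernel $(1+4i\tau\mu^2(1-\lambda))^{-1}$ already present.

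After this manoeuvre, all remaining integrands involve at most one spatial derivative on each of $f$, $g$, $h$: the $\Phi_{q_1q_2}$ and $\Phi_{q_1q_3}$ contributions are handled by Cauchy--Schwarz against the $L^2$-bounds on $\partial_p f,\partial_p g,\partial_p h$, each such estimate costing a factor $t^{1/6}$ which is compensated by the extra $\tau^{-1}$ obtained from the second integration by parts. Summing the resulting bounds over the three regions yields $|M_2|\lesssim\tau^{-13/12}$, which is the content of the claim.
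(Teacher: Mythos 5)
Your proposal correctly identifies the central obstacle (a second integration by parts in the $\lambda$-direction differentiates $q_1$ again and produces $f''$, which the norm does not control), but the remedy you propose does not close the estimate, and the paper avoids the problem by a genuinely different device. The paper does \emph{not} repeat the $(\lambda,\eta)$ scheme of $M_1$: it introduces a second linear change of variables $(\xi,\zeta)$ (obtained by exchanging the roles of $q_1$ and $q_3$) in which $q_1$ is a function of $\zeta$ alone, and performs the second integration by parts in $\xi$, using $e^{i\tau Q}=A^{-1}\partial_\xi(\xi e^{i\tau Q})$ with $A=1+4i\tau\xi^2(1-\zeta)$. Then $\partial_\xi\Phi_{q_1}$ only produces mixed derivatives $\Phi_{q_1q_2}$, $\Phi_{q_1q_3}$, so $f''$ never appears; moreover the terms where the derivative hits the oscillation come with $A_\xi/A^2$ or $B_\xi/B^2$ (with $B=1+4i\tau\mu^2(1-\lambda)$ the kernel from the first integration by parts), so that after absorbing the factor $\tau\mu\xi$ or $\tau\mu^2$ one full power of each of $A$ and $B$ survives, giving decay in \emph{both} integration variables. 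This is what produces $\tau^{1/6}\cdot\tau^{-1/2}\cdot\tau^{-3/4}=\tau^{-13/12}$.

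The concrete gap in your scheme is the term of your final $\mu$-integration by parts in which $\partial_\mu$ falls on the phase. At that stage the kernel in $\mu$ is $\mu/(1+4i\tau\mu^2(1-\lambda))$, appearing to the \emph{first} power only, and the factor $i\tau\partial_\mu Q=12i\tau\mu(1-\lambda)$ turns it into $12i\tau\mu^2(1-\lambda)/(1+4i\tau\mu^2(1-\lambda))$, which is bounded but has \emph{no decay in $\mu$}: the only localization in the $\mu$-variable is consumed. What remains is
\[
\iint \frac{|\lambda-\lambda_0|}{|1+ci\tau(\lambda-\lambda_0)\lambda(2-3\lambda)|}\,|\Phi_{q_1}|\,O(1)\,d\mu\,d\lambda ,
\]
and since $|\Phi_{q_1}|$ contains $p\,|f'(pq_1)|$ with $f'$ only in $L^2$, the best available bound (Cauchy--Schwarz in the variable along which $q_1$ varies, costing $\tau^{1/6}$, times $\int |\lambda|\,|1+i\tau\lambda^2|^{-1}d\lambda\lesssim \tau^{-1}\log\tau$) is $O(\tau^{-5/6}\log\tau)$, short of $\tau^{-13/12}$ by a factor $\tau^{1/4}$. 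Iterating the integration by parts does not help: any further $\mu$-derivative reintroduces $\Phi_{q_1q_1}$ and hence $f''$. Two smaller inaccuracies: $|\Phi_{q_1}|\lesssim p\,\|f\|\,\|g\|\,\|h\|$ is not a valid pointwise bound (only $\partial_pf\in L^2$ is assumed), and the Hölder bounds $|\Phi|\lesssim \tau^{1/6}(\sqrt{|\lambda-\lambda_0|}+\sqrt{|\eta|})$ used for $M_1$ rely on $\Phi$ \emph{vanishing} at the stationary points, which $\Phi_{q_1}$ does not do, so the amplitude terms are not simply ``the $M_1$ computation with $\Phi$ replaced by $\Phi_{q_1}$''; they must be redone with Cauchy--Schwarz placed in the correct variable.
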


\begin{proof}
We consider the change of variables
\[ \mu =\frac{3\zeta+\xi-2}{2},\quad \lambda =\frac{2-\zeta+\xi}{2}. \]
One may obtain this transformation by going back to the $\xi$ variables, switching $q_1$ with $q_3$ and then redoing the $\lambda,\mu$ transformation. In this way, $q_1$ depends on a single variable $\zeta$.

In this coordinate system, the stationary points are
\[	(\xi,\zeta)=(0,2/3) \text{ and } (-1,1). \]
In the inner region, we use the relation
\[ e^{i\tau Q}=\frac{1}{1+4i\tau \xi^2(1-\zeta)}\partial_\xi (\xi e^{i\tau Q}). \]
Define
\[ A=1+4i\tau \xi^2(1-\zeta),\quad B=1+4i\tau  \mu ^2(1-\lambda )=1+i\tau(3\zeta+\xi-2)^2(\zeta-\xi)/2. \]
We now integrate by parts:
\begin{align*}
M_2 & = \iint e^{i\tau Q}\frac{ \mu \Phi_{q_1}}{B}d\xi d\zeta \\&= \iint \xi e^{i\tau Q}\left[\Phi_{q_1q_2} \frac{ \mu }{AB} + \frac{\Phi_{q_1}}{AB} +  \mu \Phi_{q_1}\left(\frac{A_\xi}{A^2B} + \frac{B_\xi}{AB^2}\right)\right]d\xi d\zeta.
\end{align*}
Since, in this region,
\[ 1\ge \zeta, \quad 1-z\sim 1,\quad  \left|\frac{\xi A_\xi}{A}\right|\lesssim 1,\quad B\sim 1+i\tau  \mu ^2, \quad B_\xi \sim \tau  \mu, \]
we can now estimate $M_2$ as follows:
\begin{align*}
|M_2| & \lesssim \iint \frac{|\xi  \mu |||\Phi_{q_1q_2}| + |\xi||\Phi_{q_1}|}{|1+i\tau \xi^2||1+i\tau  \mu ^2|} \\
& \quad + | \mu ||\Phi_{q_1}|\left(\frac{1}{|1+i\tau \xi^2||1+i\tau  \mu ^2|} + \frac{|\tau  \mu \xi|}{|1+i\tau \xi^2||1+i\tau  \mu ^2|^2}\right)d\xi d \mu.
\end{align*}
We apply Cauchy-Schwarz to all four terms:
\begin{align*}
\iint \frac{|\xi  \mu |||\Phi_{q_1q_2}|}{|1+i\tau \xi^2||1+i\tau  \mu ^2|}d\xi d \mu & \lesssim \tau^{1/3}\left(\int \frac{| \mu |^2d \mu }{|1+i\tau  \mu ^2|^2}\right)^{\frac{1}{2}}\left(\int \frac{|\xi|^2d\xi}{|1+i\tau \xi^2|^2}\right)^{\frac{1}{2}} \lesssim \tau^{-7/6}, \\
\iint \frac{|\xi ||\Phi_{q_1}|}{|1+i\tau \xi^2||1+i\tau  \mu ^2|}d\xi d \mu & \lesssim \tau^{1/6}\int \frac{d \mu }{|1+i\tau  \mu ^2|}\left(\int \frac{|\xi|^2d\xi}{|1+i\tau \xi^2|^2}\right)^{\frac{1}{2}} \lesssim \tau^{-13/12}, \\
\iint \frac{| \mu  ||\Phi_{q_1}|}{|1+i\tau \xi^2||1+i\tau  \mu ^2|}d\xi d \mu & \lesssim \tau^{1/6}\left(\int\frac{| \mu |^2d \mu }{|1+i\tau  \mu ^2|^2}\right)^{\frac{1}{2}}\int \frac{d\xi}{|1+i\tau\xi^2|}\lesssim \tau^{-13/12}, \\
\iint \frac{| \tau  \mu ^2\xi||\Phi_{q_1}|}{|1+i\tau \xi^2||1+i\tau  \mu ^2|^2}d\xi d \mu & \lesssim \tau^{1/6}\left(\int \frac{|\xi|^2d\xi}{|1+i\tau\xi^2|^2}\right)^{\frac{1}{2}}\int \frac{|\tau  \mu ^2|d \mu }{|1+i\tau  \mu ^2|^2} \lesssim \tau^{-13/12}, 
\end{align*}
which gives suitable bounds.

In the middle region, we use the following relation to capture the stationary point: 
\[ e^{i\tau Q}=\frac{1}{1+2i\tau \xi(\xi+1)(1-\zeta)}\partial_\xi ((\xi+1) e^{i\tau Q}). \]
After an integration by parts, the computations are similar to those of the inner region and are left to the reader.
	
In the outer region, we consider the case $|1-\zeta|<1/100$ first. Then
\[ \mu \sim \xi,\quad 1-\lambda \sim \xi,\quad |\xi|\gtrsim 1 \]
and so
\begin{align*}
|M_2| &\lesssim \iint \frac{\xi}{|1+i\tau\xi^2\zeta||1+i\tau \xi^3|}\left[\xi\Phi_{q_1q_2} + \Phi_{q_1} + \xi \Phi_{q_1}\left(\frac{|\tau\xi\zeta|}{|1+i\tau\xi^2\zeta|} + \frac{|\tau \xi^3|}{|1+i\tau \xi^3|}\right)\right]d\xi d\zeta.
\end{align*}
We now estimate each term separately.
\begin{align*}
\iint \frac{|\xi^2\Phi_{q_1q_2}|}{|1+i\tau\xi^2\zeta||1+i\tau \xi^3|}d\xi d\zeta & \lesssim \tau^{1/3}\left( \iint \frac{\xi^4d\xi d\zeta}{|1+i\tau \xi^2 \zeta|^2|1+i\tau \xi^3|^2} \right)^{\frac{1}{2}}\\
& \lesssim \tau^{-2/3}\left(\int_{|\xi|\gtrsim \tau^{1/3}} \frac{\xi^2d\xi}{|1+i\xi^3|^2} \int\frac{d\zeta}{|1+i\zeta|^2} \right)^{\frac{1}{2}} \lesssim \tau^{-7/6}, \\
\iint \frac{|\xi||\Phi_{q_1}|}{|1+i\tau\xi^2\zeta||1+i\tau \xi^3|}d\xi d\zeta & \lesssim \tau^{1/6}\int \frac{d\xi}{|1+i\tau \xi^3|}\left(\int \frac{\xi^2d\zeta}{|1+i\tau \xi^2\zeta|^2}\right)^{\frac{1}{2}} \lesssim \tau^{-7/6}, \\
\iint \frac{\xi^2|\Phi_{q_1}|}{|1+i\tau\xi^2\zeta||1+i\tau \xi^3|}d\xi d\zeta & \lesssim\tau^{1/6} \int \frac{|\xi|d\xi}{|1+i\tau \xi^3|}\left(\int \frac{\xi^2d\zeta}{|1+i\tau \xi^2\zeta|^2}\right)^{\frac{1}{2}}\lesssim \tau^{-7/6}.
\end{align*}
In the remaining case where $|1-\zeta|>1/100$, we further split in the cases $| \mu |<1/100$ and $| \mu |>1/100$. In the former case,
\[ \zeta \sim -\xi,\quad 1-z\sim -\xi, \quad |\tau \xi^2 \zeta| \ge 1 \]
meaning that
\[ |M_2| \lesssim \iint \frac{|\xi|}{|\tau \xi^3||1+i\tau  \mu ^2\xi|}\left[| \mu \Phi_{q_1q_2}| + |\Phi_{q_1}| + | \mu \Phi_{q_1}|\left(1+\frac{|\tau  \mu \xi|}{|1+i\tau  \mu ^2\xi|}\right)\right]d\xi d \mu. \]
Again using Cauchy-Schwarz inequality,
\begin{align*}
\iint \frac{|\xi|}{|\tau \xi^3||1+i\tau  \mu ^2\xi|}| \mu \Phi_{q_1q_2}|d\xi d \mu & \lesssim \tau^{-2/3}\left(\iint \frac{ \mu ^2d \mu d\xi}{\xi^4|1+i\tau  \mu ^2\xi|^2}\right)^{\frac{1}{2}} \lesssim \tau^{-17/12}, \\
\iint  \frac{|\xi|}{|\tau \xi^3||1+i\tau  \mu ^2\xi|}|\Phi_{q_1}|d\xi d \mu & \lesssim \tau^{-5/6}\int \frac{d\xi}{\xi^2}\left(\int \frac{d \mu }{|1+i\tau  \mu ^2\xi|}\right)^{\frac{1}{2}}\lesssim \tau^{-13/12}, \\
\iint \frac{| \mu \xi\Phi_{q_1}|}{|\tau \xi^3||1+i\tau  \mu ^2\xi|}\left(1+\frac{|\tau  \mu  \xi|}{|1+i\tau  \mu ^2\xi|}\right)d\xi d \mu & \lesssim \iint  \frac{|\xi| |\Phi_{q_1}|}{|\tau \xi^3||1+i\tau  \mu ^2\xi|}d\xi d \mu  \lesssim\tau^{-13/12}.
\end{align*}
Finally, in the latter case where $| \mu |>1/100$, one has
\[ |1+i\tau \xi^2\zeta|\gtrsim |\tau\xi^2 \zeta|,\quad |1+i\tau  \mu ^2(1-\lambda )|\gtrsim|\tau  \mu ^2(1-\lambda )|. \]
This implies directly the even better bound $|M_2| \lesssim \tau^{-5/3}$.
\end{proof}

\begin{claim}
For $\tau <1$, we have the bound on $M_2$:
\[ |M_2| \lesssim \tau^{-2/3}. \]
\end{claim}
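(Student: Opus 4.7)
The plan is to decompose the domain into the inner, middle, and outer regions as in the preceding claims and to bound each contribution separately. Since $\tau < 1$ leaves essentially no oscillation to exploit in bounded regions, I will handle those by a direct Cauchy--Schwarz estimate, and reserve oscillatory integration by parts for the outer region.

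In the inner and middle regions, $(\lambda, \mu)$ lies in a compact set, so $|1 + 4 i \tau \mu^2 (1 - \lambda)| \gtrsim 1$. Applying Cauchy--Schwarz directly in the defining integral for $M_2$ gives
\begin{equation*}
|M_2^{\mathrm{in/mid}}| \lesssim \|\Phi_{q_1}\|_{L^2} \, \|\mu \mathbf{1}_{\mathrm{bd}}\|_{L^2} \lesssim \tau^{1/6},
\end{equation*}
which is bounded by $\tau^{-2/3}$ when $\tau \le 1$. The control $\|\Phi_{q_1}\|_{L^2} \lesssim \tau^{1/6}$ follows from $\|\partial_p f\|_{L^2} \lesssim t^{1/6}\|f\|$ together with the scaling $y = p q_i$, which yields $\|p f'(p \cdot)\|_{L^2(dq)} \lesssim p^{1/2} t^{1/6} = \tau^{1/6}$.

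In the outer region $q_3 < -3/2$, neither variable remains bounded and additional decay is needed. I would pass to the coordinates $(\xi, \zeta)$ of Claim \ref{claim:M2_tau>1}, in which $q_1 = 2\zeta - 1$ depends only on $\zeta$, and integrate by parts once more in $\xi$ using an identity of the form
\begin{equation*}
e^{i\tau Q} = \frac{1}{1 + i \tau \xi (\xi + 1)(1 - \zeta)} \, \partial_\xi \bigl((\xi + 1)\, e^{i\tau Q}\bigr)
\end{equation*}
(with an analogous local variant adapted to each portion of the outer region, e.g.\ splitting $|1-\zeta|<1/100$ vs.\ $|1-\zeta|>1/100$ as before). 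Because $q_1$ does not depend on $\xi$, this IBP produces only the cross-derivatives $\Phi_{q_1 q_2}$ and $\Phi_{q_1 q_3}$, which are controlled in $L^2$ by a tensor-product estimate yielding $\lesssim \tau^{1/3}$. The new denominators $(1 + i \tau \xi^2 \zeta)$ and $(1 + i \tau \xi^3)$ are of unit size for $|\xi| \lesssim \tau^{-1/3}$ and supply $(\tau |\xi|^3)^{-1}$-type decay for $|\xi| \gtrsim \tau^{-1/3}$, which guarantees integrability; a region-by-region Cauchy--Schwarz estimate, patterned on the argument in Claim \ref{claim:M2_tau>1}, then produces the claimed bound $\tau^{-2/3}$.

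The main obstacle is the limited $L^2$-regularity of $f$, $g$, and $h$, which forbids any integration by parts that would place two derivatives on a single factor. This is precisely why the $(\xi, \zeta)$ change of variables is essential: it decouples $q_1$ from the variable of integration, so that $\partial_\xi$ never falls on the factor already differentiated in producing $\Phi_{q_1}$. Once this structural point is arranged, the quantitative estimation mirrors the $\tau \ge 1$ analysis, with the oscillatory thresholds simply shifted to $|\xi| \sim \tau^{-1/3}$.
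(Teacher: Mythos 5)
Your overall strategy is the paper's: the same inner/middle/outer decomposition, the same passage to the $(\xi,\zeta)$ coordinates in which $q_1=2\zeta-1$ depends only on $\zeta$, a second integration by parts in $\xi$ so that no factor is differentiated twice, and Cauchy--Schwarz against $\|\Phi_{q_1}\|_{L^2}\lesssim\tau^{1/6}$, $\|\Phi_{q_1q_2}\|_{L^2}\lesssim\tau^{1/3}$. Your treatment of the inner and middle regions is correct and harmlessly different from the paper's (the paper simply bounds the pre-integration-by-parts integral by $O(1)$ using $\|\Phi\|_{L^\infty}<\infty$, which already beats $\tau^{-2/3}$ for $\tau<1$; your Cauchy--Schwarz giving $\tau^{1/6}$ does the same job). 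One small imprecision: the $\xi$-integration by parts does not produce ``only'' cross-derivatives; it also produces terms carrying $\Phi_{q_1}$ alone, multiplied by $A_\xi/A$ and $B_\xi/B$ factors, and these must be estimated separately --- though they are controlled by $\tau^{1/6}$ and cause no trouble.

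The genuine gap is in the outer region, which is where the exponent $-2/3$ actually has to be produced. Your proposal defers this to ``a region-by-region Cauchy--Schwarz estimate patterned on the argument in Claim \ref{claim:M2_tau>1}'', but that claim's estimates transfer directly only to the sub-case where $|\zeta|$ is small. In the other two sub-cases ($|\zeta|>1/100$ with $|\mu|$ small, and $|\mu|>1/100$), the $\tau\ge 1$ argument crucially invokes the lower bounds $|1+i\tau\xi^2\zeta|\gtrsim|\tau\xi^2\zeta|$ and $|1+i\tau\mu^2(1-\lambda)|\gtrsim|\tau\mu^2(1-\lambda)|$ (legitimate there because $|\xi|,|\zeta|\gtrsim 1$ and $\tau\ge 1$ force $|\tau\xi^2\zeta|\gtrsim 1$), and these fail for $\tau<1$. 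Those sub-cases must be re-estimated keeping the full Lorentzian denominators $|1+i\tau\xi^3|$, $|1+i\tau\mu^2\xi|$, $|1+i\tau(\zeta+\xi)^2(\zeta-\xi)|$, and it is exactly these recomputed integrals (after the rescalings $\xi\mapsto\tau^{1/3}\xi$, etc., with the outer-region constraint $|\xi|\gtrsim 1$ now sitting \emph{inside} the oscillation threshold $\tau^{-1/3}$) that evaluate to $\tau^{-2/3}$ rather than to the $\tau^{-13/12}$ or $\tau^{-5/3}$ of the large-$\tau$ case. You correctly identify that the thresholds shift to $|\xi|\sim\tau^{-1/3}$, but you never carry out the computation that turns this observation into the stated exponent; as written, the claim's quantitative content is asserted rather than proved. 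The framework you set up does fill this gap once the half-dozen integrals are actually evaluated, as the paper does.
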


\begin{proof}
In the outer region, we further split depending on the sizes of $\zeta$ and $\mu$. Assume first that $|\zeta|<1/100$. Proceeding as in Claim \ref{claim:M2_tau>1}, we obtain
\[ |M_2| \lesssim \tau^{-2/3}. \]
If $|\zeta|>1/100$ and $| \mu |<1/100$, then
\[ \zeta\sim -\xi,\quad 1-\lambda \sim -\xi. \]
This yields the estimates
\begin{align*}
\iint \frac{| \mu \xi|}{|1+i\tau\xi^3||1+i\tau  \mu ^2\xi|}|\Phi_{q_1q_2}|d\xi d \mu & \lesssim \tau^{1/3}\left(\iint \frac{\xi^2  \mu ^2 d\xi d \mu }{|1+i\tau \xi^3|^2|1+i\tau  \mu ^2\xi|^2}\right)^{\frac{1}{2}} \lesssim \tau^{-2/3}, \\
\iint \frac{|\xi|}{|1+i\tau\xi^3||1+i\tau  \mu ^2\xi|}|\Phi_{q_1}|d\xi d \mu & \lesssim \tau^{1/6}\int \frac{d\xi}{|1+i\tau\xi^3|}\left(\int \frac{\xi^2 d \mu }{|1+i\tau  \mu ^2\xi|^2}\right)^{\frac{1}{2}} \lesssim \tau^{-2/3}.
\end{align*}
Finally, if $| \mu |>1/100$, then
\[ \mu \sim \zeta + \xi,\quad 1 - \lambda \sim \zeta - \xi, \]
and so
\begin{align*}
\MoveEqLeft \iint \frac{(|\zeta|+|\xi|)|\xi|}{|1+i\tau \xi^2\zeta||1+i\tau (\zeta+\xi)^2(\zeta-\xi)|}|\Phi_{q_1q_2}|d\zeta d\xi \\ 
& \lesssim \tau^{1/3}\left( \iint \frac{(|\zeta|^2+|\xi|^2)|\xi|^2}{|1+i\tau \xi^2\zeta|^2|1+i\tau (\zeta+\xi)^2(\zeta-\xi)|^2}|d\zeta d\xi\right)^{\frac{1}{2}}\lesssim \tau^{-2/3}, \\
\MoveEqLeft \iint \frac{|\xi|}{|1+i\tau \xi^2\zeta||1+i\tau (\zeta+\xi)^2(\zeta-\xi)|}|\Phi_{q_1}|d\zeta d\xi \\
& \lesssim \tau^{1/6}\int d\xi \left(\frac{|\xi|^2d\zeta}{|1+i\tau \xi^2\zeta|^2|1+i\tau (\zeta+\xi)^2(\zeta-\xi)|^2} \right)^{\frac{1}{2}}\lesssim \tau^{-2/3}.
\end{align*}

Finally, in the inner and middle regions, we proceed as in Claim \ref{claim:M1_tau<1}: the fact that $\Phi$ is bounded yields the better bound
\[ 
	\left|\iint e^{itp^3Q}\Phi dq_1dq_2\right| \lesssim 1. \qedhere
\]
\end{proof}

Gathering all these bounds, and taking into account the main order term, gives the expansion \eqref{eq:N_expansion} together with the bound \eqref{eq:N_remainder} on the remainder. The proof is complete.	
\end{proof}

\nocite{HM80}
\nocite{HN01}
\nocite{HN99}
\nocite{CV08}
\nocite{DZ95}
\nocite{GPR16}
\nocite{HaGr16}
\nocite{PV07}
\nocite{FIKN06}
\nocite{FA83}

\bibliography{biblio_mkdv}
\bibliographystyle{plain}

\bigskip
\bigskip

\normalsize

\begin{center}
{\scshape Simão Correia}\\
{\footnotesize
	CMAF-CIO, Universidade de Lisboa\\
Edif\'{\i}cio C6, Campo Grande\\
1749-016 Lisboa, Portugal\\
\email{sfcorreia@fc.ul.pt}
}

\bigskip

{\scshape Raphaël Côte}\\
{\footnotesize
Université de Strasbourg\\
CNRS, IRMA UMR 7501\\
F-67000 Strasbourg, France\\
\email{cote@math.unistra.fr}
}

\bigskip

{\scshape Luis Vega}\\
{\footnotesize
Departamento de Matemáticas\\
Universidad del País Vasco UPV/EHU\\
Apartado 644, 48080, Bilbao, Spain\\
\ \\
Basque Center for Applied Mathematics BCAM\\
Alameda de Mazarredo 14, 48009 Bilbao, Spain\\
\email{lvega@bcamath.org}
}
\end{center}

\end{document}